\newtheorem{thm}{Theorem}[section]
\newtheorem{cor}[thm]{Corollary}
\newtheorem{lem}[thm]{Lemma}
\newtheorem{lemma}[thm]{Lemma}
\newtheorem{prop}[thm]{Proposition}
\theoremstyle{definition}
\newtheorem{defn}[thm]{Definition}
\theoremstyle{remark}
\newcommand{\F}{\ensuremath{\mathbb F}}
\newcommand{\gH}{\mathcal{H}}
\newcommand{\Z}{\ensuremath{\mathbb Z}}
\newcommand{\gO}{\mathcal{O}}
\newcommand{\gA}{\mathcal{A}}
\newcommand{\Q}{\ensuremath{\mathbb Q}}
\newcommand{\gG}{\ensuremath{\mathcal G}}
\newcommand{\gE}{\ensuremath{\mathcal E}}
\newcommand{\gJ}{\ensuremath{\mathcal J}}
\DeclareMathOperator{\End}{\mathrm{End}}
\DeclareMathOperator{\Aut}{\mathrm{Aut}}
\DeclareMathOperator{\Nrd}{\mathrm{Nrd}}
\DeclareMathOperator{\NS}{\mathrm{NS}}
\DeclareMathOperator{\Pic}{\mathrm{Pic}}
\DeclareMathOperator{\im}{\mathrm{im}}
\DeclareMathOperator{\Iso}{\mathrm{Iso}}
\DeclareMathOperator{\GL}{\mathrm{GL}}
\newcommand{\va}{\varphi}
\newcommand{\rw}{\rightarrow}
\newcommand{\su}{\subseteq}
\newcommand{\la}{\langle}
\newcommand{\ra}{\rangle}
\newcommand{\be}{\beta}
\newcommand{\al}{\alpha}
\newcommand{\overbar}[1]{\mkern 1.5mu\overline{\mkern-1.5mu#1\mkern-1.5mu}\mkern 1.5mu}
\newcommand{\ppav}{\mathrm{PPAV}}
\newcommand{\psas}{\mathrm{PPSSAS}}
\newcommand{\psav}{\mathrm{PPSSAV}}
\newcommand{\Na}{\mathbf{N1}}
\newcommand{\Nb}{\mathbf{N2}}
\newcommand{\Nc}{\mathbf{N3}}
\newcommand{\Nd}{\mathbf{N4}}
\newcommand{\Da}{\mathbf{D1}}
\newcommand{\Db}{\mathbf{D2}}
\newcommand{\Dc}{\mathbf{D3}}
\newcommand{\Dd}{\mathbf{D4}}
\newcommand{\De}{\mathbf{D5}}
\newcommand{\DD}{\mathbf{D}}
\newcommand{\NNN}{\mathbf{N}} 
\begin{document}

	\title[Neighborhood of vertices in the isogeny graph of $\psas$]{Neighborhood of vertices in the isogeny graph of principally polarized  superspecial abelian surfaces}
	\author{Zheng Xu$^{1}$} \author{Yi Ouuyang$^{1,2}$$^*$} \author{Zijian Zhou$^{3}$}
	
	\address{$^1$Hefei National Laboratory, University of Science and Technology of China, Hefei 230088, China}

	\address{$^2$School of Mathematical Sciences, Wu Wen-Tsun Key Laboratory of Mathematics,   University of Science and Technology of China, Hefei 230026, Anhui, China}

	\address{$^{3}$College of Sciences, National University of Defense Technology,	Changsha 410073, Hunan, China}
	\email{xuzheng1@mail.ustc.edu.cn}
	\email{yiouyang@ustc.edu.cn}
	\email{zhouzijian122006@163.com}

	\subjclass[2020]{11G20, 11G15, 14G15, 14H52, 94A60}
	\keywords{Supersingular elliptic curves, Superspecial abelian surfaces, principally polarized, Isogeny graph}
	
	\date{}
	\maketitle

	\begin{abstract}
		For two supersingular elliptic curves $E$ and $E'$ defined over $\F_{p^2}$, let $[E\times E']$ be the superspecial abelian surface with the principal polarization $\{0\}\times E'+E\times \{0\}$.  We determine local structure of the  vertices $[E\times E']$ in the   $(\ell,\ell)$-isogeny graph of principally polarized superspecial abelian surfaces where either $E$ or $E'$ is defined over $\F_p$.  We also present a simple new proof of the main theorem in \cite{LOX20}.
	\end{abstract}

	\section{Introduction}
	The supersingular isogeny-based cryptography is a relatively new suggestion for post quantum cryptosystems and is based on the assumption that it  is computationally hard even for a quantum computer to find a path in the $\ell$-isogeny  graph between two given vertices or equivalently to compute the endomorphism rings of supersingular elliptic  curves. 
	Known instantiations  for the isogeny-based cryptography include the key exchange protocols SIDH \cite{jf}, CSIDH \cite{clm} and OSIDH \cite{ck}, and the signature schemes  GPS-sign \cite{gps} and SQI-sign \cite{fkl}.
	
	As isogenies between supersingular curves found success in post quantum cryptography, it is natural to generalize the isogeny-based cryptosystem to abelian varieties of higher dimensions.  Flynn-Ti \cite{ft}  constructed a SIDH-like key exchange cryptosystem based on supersingular hyperelliptic curves over finite fields. In \cite{cds}, Castryck-Decru-Smith  constructed hash functions by Richelot isogenies between superspecial abelian varieties of dimension two as a generalization of hash functions in \cite{cgl}.
	
	The  security of isogeny-based cryptosystems depends on finding paths between vertices in the supersingular isogeny graphs, thus it is important to study  the structure of these  graphs, especially  the neighbors of a fixed vertex.  
	
	In the supersingular elliptic curve case, the isogeny graph
	over $\F_p$ was constructed in \cite{dg}. 
	Adj-Ahmadi-Menezes \cite{aam} described the subgraphs with trace $0$ or $\pm p$.  The authors  (with S. Li) clarified the local structure of the isogeny graph at $\F_p$-vertices (i.e. whose $j$-invariants are inside $\F_p$) in \cite{OX,LOX201,LOX20}. As there is no effective way to describe explicitly  the endomorphism rings of supersingular elliptic curves defined over  $\F_{p^2}\backslash \F_p$, it is still an open problem to determine the local structure at these vertices.

	Now let us move on to the higher dimension case, in particular the abelian surfaces case (the dimension $2$ case). Ionica and Thom\'e \cite{it}  described the structure of the isogeny graph of ordinary abelian surfaces. Katsura and Takashima  \cite{kt1, kt2, t} obtained the number of Richelot isogenies from superspecial  abelian surfaces to products of supersingular curves. Florit and  Smith \cite{fs, fs1}  classified the Richelot isogenies  between  superspecial abelian surfaces. In \cite{jz1, jz2}, Jordan and Zaytman summarized the relationship between matrices of maximal orders of quaternion algebras and isogenies of principally polarized superspecial abelian varieties ($\psav$ in short), defined the big and little isogeny graphs of $\psav$ and studied their structures.    
	Moreover, the method to find a path and the concept of multiradical isogenies have been extended to the case of abelian surfaces in \cite{cs, cd2}.
	
	Note that the recent broken of SIDH \cite{cd1, MM, R} is based on the fact that the secret key can be recovered by  constructing an isogeny from  an abelian surface with information extracted from the given torsion points. This indicates that understanding the isogeny (graph) of  abelian surfaces  could also be very useful in studying the supersingular isogeny cryptosystems.
	
	For two supersingular elliptic curves $E$ and $E'$ defined over $\F_{p^2}$, let $[E\times E']$ be the superspecial abelian surface with the principal polarization $\{0\}\times E'+E\times \{0\}$. In this article, we determine local structure of the  vertices $[E\times E']$ in the  $(\ell,\ell)$-isogeny graph of principally polarized superspecial abelian surfaces where either $E$ or $E'$ is defined over $\F_{p}$. Previously, Florit and Smith\cite{fs, fs1} partitioned vertices in the graph into several types and  computed the number of edges connecting to each type in the $(2,2)$-isogeny graph. Here we treat the general case $\ell\geq 2$  and obtain the  numbers of edges connecting to any adjacent vertex rather than its type and also determine the structure of the associated isogeny kernels. We obtain our results by classifying the action of automorphism groups in the kernels of isogenies. 
	
	The paper is organized as follows. In \S 2 we recall basic results about principally polarized  superspecial  abelian varieties/surfaces and the associated $(\ell,\ell)$-isogeny graphs. We study the loops and neighbors of $[E\times E']$ in the cases (i) $E=E'=E_{1728}$, (ii) $E=E'=E_0$, (iii) $j(E)\in \F_p\backslash\{0,1728\}$ and $j(E')\neq j(E)$, (iv) $j(E)\in \F_p\backslash\{0,1728\}$ and $E'= E$ in \S 3-\S 6 respectively. We present   a simple new proof of the main theorem in \cite{LOX20} in \S 7.

	\section{Preliminaries}
	
	Throughout this paper we assume that $p$ is a prime number and 
	$k$ is a finite field of characteristic $p$. Let $\Bar{\mathbb{F}}_p$ be an algebraic closure of $k$. We assume the abelian varieties are defined over $\Bar{\mathbb{F}}_p$. 
	
	\subsection{Principally polarized abelian varieties}
	We recall a few facts about principally polarized abelian varieties. 
	
	Let $A$ be an abelian variety defined over $k$. Then a divisor $D$ determines an isogeny $\lambda_D: A \to \hat{A}$, the dual abelian variety of $A$. If $D$ is an ample divisor, then $\lambda_D$ is a polarization on $A$ .  If moreover $\deg(\lambda_D)=1$, then  $\lambda_D$  is   a principally polarization of $A$ and  $\gA= (A,D)$ is called a principally polarized abelian variety ($\ppav$ in short).

	For $m\in \Z_+$, let $A[m]$ be the $m$-torsion subgroup of $A$. 
	If $m$ is  prime to $p$, a subgroup $S$ of $A[m]$ is called maximal $m$-isotropic if it is maximal among subgroups $T$ of $A[m]$ such that the restriction of the Weil pairing $e_m: A[m]\times A[m]\rightarrow \mu_m$ on $T\times T$ is trivial. 
	
	The following result in \cite{mu} implies that the kernel of an isogeny of  principally polarized abelian varieties is a   maximal isotropic subgroup:
	\begin{thm} \label{theorem:3. 13}
		Let $(A, D)$ be  a principally polarized abelian variety over $\overbar{\F}_p$ and $S$ be a subgroup of $A[m]$. Denote by $\phi:A \to A'=A/S$ the isogeny with kernel $S$. Then there exists a principally polarized divisor $D'$ of $A'$ such that $\phi^{*}D'\sim mD$ if and only if $S$ is a maximal  $m$-isotropic subgroup. Particularly, if $S$ is a maximal  $m$-isotropic subgroup of $A[m]$, then        $(A', D')$ is also a principally polarized abelian variety.
	\end{thm}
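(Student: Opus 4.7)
The plan is to translate the linear-equivalence condition $\phi^{*}D'\sim mD$ into an equality of polarization maps $A\to\hat A$ and then analyze when it can be solved. Writing $\lambda_D:A\to\hat A$ for the isomorphism attached to the principal polarization $D$, the relation $\phi^{*}D'\sim mD$ is equivalent to
\[
\hat\phi\circ\lambda_{D'}\circ\phi \;=\; [m]\circ\lambda_D
\]
as homomorphisms $A\to\hat A$. Hence the theorem reduces to two questions: (i) when does $[m]\lambda_D$ admit a factorization of the displayed form? (ii) when is the middle factor $\lambda_{D'}$ an isomorphism (equivalently a principal polarization, the ampleness of $D'$ following from $\phi^*D'\sim mD$ together with finiteness of $\phi$)?

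For (i), I would first observe that $S=\ker\phi\subseteq A[m]=\ker([m]\lambda_D)$ by hypothesis, so $[m]\lambda_D$ factors uniquely through $\phi$ as $\mu_0\circ\phi$ for some $\mu_0:A'\to\hat A$. The remaining task is to decide when $\mu_0=\hat\phi\circ\lambda_{D'}$ for a symmetric $\lambda_{D'}:A'\to\hat{A'}$. Dualizing this identity and invoking the canonical $\hat{\hat\phi}=\phi$, the existence of such a $\lambda_{D'}$ is equivalent to the dual map $\hat\mu_0:A\to\hat{A'}$ factoring through $\phi$; symmetry of the resulting factor comes out of the diagram chase comparing $\mu_0\circ\phi=[m]\lambda_D$ with its dual. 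This factorization condition is in turn equivalent to $S\subseteq\ker\hat\mu_0$. Since $\hat\phi\circ\hat\mu_0=[m]\lambda_D$, one checks that $\hat\mu_0(A[m])\subseteq\ker\hat\phi$, and under the Cartier-duality identification $\ker\hat\phi\cong\widehat{S}$ the pairing $S\times S\to\mu_m$ induced by $\hat\mu_0|_S$ matches the restriction of the Weil pairing $e_m$ to $S\times S$. Hence (i) holds iff $S$ is $m$-isotropic.

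For (ii), I would take degrees in the displayed equation. Using $\deg\lambda_D=1$ and $\deg\hat\phi=\deg\phi=|S|$ gives $|S|^{2}\cdot\deg\lambda_{D'}=m^{2g}$ with $g=\dim A$, so $\deg\lambda_{D'}=1$ iff $|S|=m^{g}$. Since $A[m]\cong(\Z/m)^{2g}$ equipped with the non-degenerate alternating Weil pairing, every maximal $m$-isotropic subgroup has order exactly $m^{g}$; thus $\lambda_{D'}$ is a principal polarization precisely when $S$ is maximal isotropic, which combined with (i) yields the stated biconditional as well as the final assertion that $(A',D')$ is a $\ppav$.

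The hard part is the Weil-pairing identification invoked in the second paragraph: one must verify that the pairing on $S$ extracted from the dual isogeny together with $\lambda_D$ genuinely equals the restriction of $e_m$, with consistent sign and normalization. This is a classical compatibility in Mumford's construction of the Weil pairing via polarizations (see \cite{mu}); once it is granted, both directions of the theorem follow formally from the diagram chase outlined above.
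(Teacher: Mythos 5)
The paper does not prove this theorem; it is stated as a consequence of Mumford \cite{mu}, where it appears as the descent-of-polarizations theorem, so there is no in-paper argument to compare against. Your sketch is a correct reconstruction of that argument. The reduction to factoring $[m]\lambda_D=\hat\phi\circ\lambda_{D'}\circ\phi$, the existence and uniqueness of $\mu_0$ with $\mu_0\circ\phi=[m]\lambda_D$, the characterization of when $\mu_0=\hat\phi\circ\lambda_{D'}$ with $\lambda_{D'}$ symmetric by the condition $S\subseteq\ker\hat\mu_0$ (symmetry of the resulting factor does indeed come out by cancelling the surjective $\phi$ on the right and the finite-kernel $\hat\phi$ on the left against $\hat\lambda_D=\lambda_D$), the degree count $|S|^2\deg\lambda_{D'}=m^{2g}$, and the recovery of ampleness of $D'$ from that of $\phi^{*}D'$ via finiteness of $\phi$ are all correct. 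You are also right that the entire content is concentrated in the identification, via the Cartier-duality isomorphism $\ker\hat\phi\cong\widehat{S}$, of the pairing $S\times S\to\mu_m$ extracted from $\hat\mu_0$ with the restriction of the Weil pairing $e_m^{\lambda_D}$; that is exactly Mumford's commutator-pairing computation, and your proof inherits it rather than reproving it, which is reasonable but should be cited precisely. Two small points to record explicitly: the whole discussion presupposes $m$ prime to $p$, as the paper does when it defines $m$-isotropy; and passing from the equality of polarization maps $\lambda_{\phi^{*}D'}=\lambda_{mD}$ to the asserted linear equivalence $\phi^{*}D'\sim mD$ uses that $\hat\phi:\Pic^{0}(A')\to\Pic^{0}(A)$ is surjective, so $D'$ may be adjusted by a degree-zero class.
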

	
	By the above result, if  $\va$ is an isogeny between a principally polarized abelian variety $(A, D)$ and an abelian variety $A'$, and $\ker(\va)$ is a maximal $m$-isotropic subgroup, then $A'$  also has a structure of principal polarization. 
	
	Suppose $p>3$ and  $\ell$ is a prime different from $p$. 
	The following result in \cite{ft} presents the types of maximal $\ell^n$-isotropic subgroups.
	\begin{prop} \label{prop:lniso}
		Let $\gA= (A, D)$ be a principally polarized abelian surface. Then there are two types of  maximal $\ell^n$-isotropic subgroups in $A[\ell^n]$:
		\begin{enumerate}[1]
			\item $\mathbb{Z}/\ell^n\mathbb{Z}\times \mathbb{Z}/\ell^n\mathbb{Z}$,
			\item $\mathbb{Z}/\ell^n\mathbb{Z}\times \mathbb{Z}/\ell^{n-k}\mathbb{Z}\times \mathbb{Z}/\ell^{k}\mathbb{Z}$ with $0\leq k\leq \lfloor\frac{n}{2}\rfloor$.
		\end{enumerate}
		The number of maximal $\ell^n$-isotropic subgroup  is equal to $\ell^{2n-3}(\ell^2+1)(\ell+1)(\ell^n+ \frac{\ell^{n-1}-1}{\ell-1})$.
		
		In particular, for $n=1$, there are $(\ell+1)(\ell^2+1)$ maximal $\ell$-isotropic subgroups,  all  of the form $\Z/\ell\Z\times \Z/\ell\Z$.
	\end{prop}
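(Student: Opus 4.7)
The plan is to regard $V = A[\ell^n]$ as a free rank-$4$ module over $R := \mathbb{Z}/\ell^n\mathbb{Z}$ equipped with the perfect alternating Weil pairing $e_{\ell^n}\colon V \times V \to \mu_{\ell^n}$, so that a subgroup $S \subseteq V$ is maximal $\ell^n$-isotropic precisely when $S = S^{\perp}$. From perfectness of the pairing and the isomorphism $V/S^{\perp} \cong \Hom(S,\mu_{\ell^n})$ one immediately extracts $|S| = \ell^{2n}$ and $V/S \cong S$ as abstract $\ell$-groups.

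To classify the allowable isomorphism types, write $S \cong \bigoplus_{i=1}^{4}\mathbb{Z}/\ell^{a_i}\mathbb{Z}$ with $n \geq a_1 \geq a_2 \geq a_3 \geq a_4 \geq 0$ and $\sum a_i = 2n$. Combining the self-duality $V/S\cong S$ with the elementary-divisor description of a quotient of $V=(\mathbb{Z}/\ell^n\mathbb{Z})^4$ by a subgroup of type $(a_1,a_2,a_3,a_4)$ (whose quotient has type $(n-a_4,n-a_3,n-a_2,n-a_1)$) yields the numerical relations
\[
a_1+a_4 \;=\; a_2+a_3 \;=\; n.
\]
The remaining, genuinely symplectic, step is to force $a_1=n$, so that $a_4=0$ and the pair $(a_2,a_3)=(n-k,k)$ with $0\le k\le\lfloor n/2\rfloor$ produces the two types listed. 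I would carry this out by fixing a symplectic basis $\{e_1,e_2,f_1,f_2\}$ with $e_{\ell^n}(e_i,f_j)=\delta_{ij}\zeta$, assuming an element of $S$ of maximal order $\ell^{a_1}$ with $a_1<n$, and using the induced non-degenerate pairing on $S+S^{\perp}$ inside $V/\ell V$ to produce a strictly larger isotropic subgroup, contradicting maximality.

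For the existence half I would exhibit the explicit representatives
\[
S_k \;=\; \bigl\langle e_1,\; \ell^{k}e_2,\; \ell^{n-k}f_2 \bigr\rangle, \qquad 0\le k\le \lfloor n/2\rfloor,
\]
with $S_0$ realising type~(1), and check isotropy directly by expanding $e_{\ell^n}$ on pairs of generators using $e_{\ell^n}(e_i,e_j)=e_{\ell^n}(f_i,f_j)=1$ and $e_{\ell^n}(e_i,f_j)^{\ell^n}=1$. For the count I would invoke the transitive action of $\mathrm{Sp}_4(R)$ on each isomorphism type and apply orbit--stabiliser, reducing the problem to computing $|\mathrm{Stab}_{\mathrm{Sp}_4(R)}(S_k)|$; alternatively, I would enumerate directly by first choosing a cyclic generator of order $\ell^n$ (this is where the factor $\ell^n+\frac{\ell^{n-1}-1}{\ell-1}$ should arise) and then counting isotropic completions. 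The $n=1$ assertion then specialises to the classical enumeration of Lagrangian $2$-planes in the symplectic $\mathbb{F}_\ell$-space $(\mathbb{F}_\ell)^4$, which gives $(\ell+1)(\ell^2+1)$.

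The main obstacle is the symplectic step ruling out the ``flat'' strata with $a_1<n$: the numerical constraints $a_1+a_4=a_2+a_3=n$ alone still admit types such as $(n-1,n-1,1,1)$ for $n\ge 3$, so excluding them genuinely requires the symplectic form (beyond its mere existence as a perfect pairing) and is the delicate part of the argument. A secondary obstacle is the bookkeeping in the counting step: assembling the stabilisers of each $S_k$ and summing orbit sizes to recover the closed form $\ell^{2n-3}(\ell^2+1)(\ell+1)\bigl(\ell^n+\tfrac{\ell^{n-1}-1}{\ell-1}\bigr)$ is a nontrivial computation which I would verify by induction on $n$, using the base case $n=1$ as a sanity check.
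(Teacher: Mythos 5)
The paper itself gives no proof of this proposition --- it is quoted from \cite{ft} --- so your attempt has to stand on its own, and it breaks exactly at the step you yourself flagged as delicate. Your setup is fine: maximality is indeed equivalent to $S=S^{\perp}$, $|S|=\ell^{2n}$, $V/S\cong S$, and the relation $a_1+a_4=a_2+a_3=n$ is correct (the quotient-type formula you use is valid because $V=A[\ell^n]$ is free over $\Z/\ell^n\Z$, by Smith normal form); your representatives $S_k$ do realize the listed types. But there is no symplectic argument forcing $a_1=n$, because that claim is false under the paper's own definition of maximal $m$-isotropic (maximal among subgroups on which $e_m$ restricts trivially). Concretely, for $n=2$ the subgroup $S=A[\ell]=\ell V$ is isotropic in $A[\ell^2]$ (since $e_{\ell^2}(\ell x,\ell y)=e_{\ell^2}(x,y)^{\ell^2}=1$) and has order $\ell^{2n}$, hence $S=S^{\perp}$ and it is maximal, yet its type is $(\Z/\ell\Z)^4$; more generally $\langle \ell e_1,\ \ell e_2,\ \ell^{n-1}f_1,\ \ell^{n-1}f_2\rangle$ is maximal isotropic of type $(n-1,n-1,1,1)$ for every $n\ge 2$. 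Since these groups already equal their own perpendicular, your plan of producing a strictly larger isotropic subgroup from an element of maximal order $\ell^{a_1}$ with $a_1<n$ cannot succeed. What is true, and what the displayed count actually enumerates, is the classification of maximal isotropic subgroups containing a point of order $\ell^n$: with that extra hypothesis $a_1=n$ is immediate, your duality relations give $a_4=0$ and $(a_2,a_3)=(n-k,k)$, and the two types follow. (For $n=2$ one can check that the stated formula equals the number of maximal isotropic subgroups of exponent $\ell^2$, the total number being larger by exactly one, namely $A[\ell]$; the paper implicitly uses this missing type later, e.g.\ in Proposition~\ref{llc}, where the loop $\ell I_2$ has kernel $A[\ell]\cong(\Z/\ell\Z)^4$, a maximal $\ell^2$-isotropic subgroup.) So the honest fix is to add the exponent-$\ell^n$ hypothesis (or adopt that reading of ``maximal isotropic''), after which your structural argument goes through essentially as you outlined.

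A secondary, genuine incompleteness is the counting half: you only sketch it. Transitivity of $\mathrm{Sp}_4(\Z/\ell^n\Z)$ on isotropic subgroups of a fixed type is asserted rather than proved (it requires a Witt-type extension theorem over $\Z/\ell^n\Z$, not just over $\F_\ell$), no stabilizer orders are computed, and the alternative direct enumeration is not carried out, so the closed form $\ell^{2n-3}(\ell^2+1)(\ell+1)\bigl(\ell^n+\frac{\ell^{n-1}-1}{\ell-1}\bigr)$ is never actually derived. The $n=1$ specialization (Lagrangian planes in $\F_\ell^4$, giving $(\ell+1)(\ell^2+1)$, all of type $\Z/\ell\Z\times\Z/\ell\Z$) is correct as you state it, and is the only case the paper genuinely relies on in this form.
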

	
	\subsection{Superspecial abelian varieties}
	A supersingular abelian variety $A$ over  $\Bar{\F}_p$ is an abelian variety isogenous to a product of supersingular elliptic curves.  A superspecial abelian variety $A$ is  a supersingular abelian variety that is isomorphic to a product of supersingular elliptic curves.  
	
	By definition, superspecial = supersingular in the elliptic curve case. There are many (if $p$ is large enough) non-isomorphic supersingular elliptic curves over  $\Bar{\F}_p$.  However,  there is only one (up to isomorphism) superspecial abelian variety for each dimension $g>1$ by the following famous result:
	\begin{thm}[Deligne, Ogus\cite{Ogus}, Oort, Shioda\cite{Shioda}] \label{thm:doos} Any 
		superspecial abelian variety $A/\Bar{\F}_p$ of dimension $g>1$ is isomorphic to $E^g$, where $E$ is an arbitrary supersingular elliptic curve over $\Bar{\F}_p$. 
	\end{thm}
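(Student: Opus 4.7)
The plan is to reduce this classical theorem to a module-theoretic statement about locally-free modules over the endomorphism ring of a single supersingular elliptic curve, and then invoke a freeness result for such modules.

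First I would reduce to the case $g=2$. By the definition of superspecial, $A$ is already isomorphic to some product $E_1 \times \cdots \times E_g$ of supersingular elliptic curves, so the theorem is equivalent to the claim that any such product is isomorphic to $E^g$. A straightforward induction (swapping one factor at a time) reduces this to showing that for any two supersingular elliptic curves $E_1, E_2$, one has $E_1 \times E_2 \cong E \times E$.

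For the base case I would translate the statement via the Deuring correspondence. Fix $E$ and set $\mathcal{O} = \End(E)$, a maximal order in the definite rational quaternion algebra $B_{p,\infty}$ ramified exactly at $p$ and $\infty$. The functor $A \mapsto \Hom(A,E)$ is an anti-equivalence from the category of abelian varieties isogenous to $E^g$ to the category of locally-free rank-$g$ left $\mathcal{O}$-modules; in particular, isomorphism classes of supersingular elliptic curves correspond bijectively to left $\mathcal{O}$-ideal classes, and $\Hom(E_1 \times E_2, E) \cong I_1 \oplus I_2$ with $I_j := \Hom(E_j,E)$. Thus the goal becomes purely algebraic: for any two left $\mathcal{O}$-ideals $I_1, I_2$, one has $I_1 \oplus I_2 \cong \mathcal{O} \oplus \mathcal{O}$ as left $\mathcal{O}$-modules.

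The main obstacle is proving this freeness for rank-$2$ modules. It fits into the general pattern that for a maximal order in a quaternion algebra over $\mathbb{Q}$, any locally-free module of rank $n \geq 2$ is determined up to isomorphism by its genus, so any two modules in the same genus are isomorphic. The delicate point is that strong approximation fails for $\SL_1(B_{p,\infty})$ in the present definite setting (its real points are compact), but it does hold for $\SL_n(B_{p,\infty})$ once $n \geq 2$ (whose real points $\SL_n(\mathbb{H})$ are non-compact), and this is precisely what lets the rank-$\geq 2$ classification collapse from the ideal class set down to the genus. Since every left $\mathcal{O}$-ideal is locally principal, $I_1 \oplus I_2$ automatically lies in the same genus as $\mathcal{O}^2$, giving the desired isomorphism. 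A more concrete variant of the argument would choose $\alpha_j \in I_j$ of coprime reduced norms (possible because the positive-definite quaternary norm form on $I_j$ represents enough integers, via theta-series/geometry-of-numbers) and then use Bezout's identity to exhibit an explicit $2\times 2$ matrix with entries in the relevant Hom-groups whose ``reduced determinant'' is a unit, realizing the isomorphism directly.
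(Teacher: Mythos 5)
The paper does not actually prove this statement: it is quoted as a classical theorem with citations to Ogus and Shioda (whose published arguments go through K3 crystals and periods, while Deligne's goes through Dieudonn\'e theory and strong approximation), so there is no ``paper proof'' to match. Your outline reconstructs the standard quaternionic-module proof, and its core is sound: reduce to $g=2$; translate via Deuring theory into the statement $I_1\oplus I_2\cong \mathcal{O}^2$ for left ideals of a maximal order $\mathcal{O}\subset B_{p,\infty}$; note that ideals of a maximal order are locally principal, so $I_1\oplus I_2$ lies in the genus of $\mathcal{O}^2$; and invoke the Eichler/Jacobinski theory (strong approximation holds for $\SL_n$ of a definite quaternion algebra once $n\ge 2$, and the relevant reduced-norm class group is the narrow class group of $\Q$, which is trivial) to conclude that the genus contains a single isomorphism class. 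That is exactly the classical mechanism behind the theorem.

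There is, however, one statement you rely on that is false as written and should be repaired: $A\mapsto \Hom(A,E)$ is \emph{not} an anti-equivalence from the category of all abelian varieties isogenous to $E^g$ to locally free rank-$g$ left $\End(E)$-modules when $E$ is supersingular in characteristic $p$. In every dimension $g\ge 2$ there exist supersingular abelian varieties that are isogenous to $E^g$ but not superspecial (they are not products of elliptic curves); combined with your module classification, the claimed equivalence would force all of them to be isomorphic to $E^g$, which is wrong. So as stated your lemma proves too much, which signals the gap. The fix is to restrict the functor to the subcategory of abelian varieties that are products of supersingular elliptic curves (equivalently, to use only: $\Hom(E_1\times\cdots\times E_g,E)\cong I_1\oplus\cdots\oplus I_g$ with $I_j=\Hom(E_j,E)$, together with full faithfulness of $\Hom(-,E)$ on such products, which reduces to the classical Deuring fact $\Hom(E_1,E_2)\cong\Hom_{\mathcal{O}}(\Hom(E_2,E),\Hom(E_1,E))$). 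Since the $A$ in the theorem is superspecial, hence by definition already a product $E_1\times\cdots\times E_g$, this restricted and true version of the correspondence is all your argument needs, and with that correction the proof goes through. The concrete Bezout/coprime-norm variant you mention at the end would need genuine additional work (matching left and right orders so that the $2\times 2$ matrix makes sense and is invertible over the relevant orders), so I would keep the genus/strong-approximation argument as the actual proof.
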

	
	\subsection{Principally polarized superspecial abelian varieties} Assigning a principal polarizations to the superspecial abelian variety (when the dimension $g>1$ is fixed), we obtain a principally polarized superspecial abelian variety ($\psav$ in short) and in particular a principally polarized superspecial abelian surface ($\psas$ in short) when $g=2$. Different principal polarization gives different $\psav$.	
	For example, the product $E_1 \times E_2 \times \cdots \times E_g$ of supersingular elliptic curves,   with the principal polarization $\{0\}\times E_2\times \cdots \times E_g+ \cdots + E_1\times E_2\times \cdots \times \{0\}$, is a $\psav$. 
	
	For abelian surfaces, the following is well-known:
	\begin{thm}
		There are two types of $\psas$ over  $\Bar{\F}_p$:
		\begin{enumerate}[1]
			\item Jacobian type $\gJ_p$, consisting of  Jacobians of  superspecial hyperelliptic curve of genus $2$ with  the canonical principal polarization, whose number    is
			\[ 	\#\gJ_p= \begin{cases}			
				0, & \text{ if }p=2,3,\\
				1, &  \text{ if }p=5,\\
				\dfrac{p^3+24p^2+141p-346}{2880}, & \text{ if } p>5.
			\end{cases}\]
			
			\item Product type $\gE_p$: consisting of products of two supersingular elliptic curves with the above principal polarization, whose number is
			\[ \# \gE_p=\begin{cases}			
				1,  &  \text{ if }p=2,3,5,\\ \frac{1}{2}{S_{p^2}}({S_{p^2}}+1), & \text{ if } p>5,			
			\end{cases}\]
			where $S_{p^2}$ is the number of isomorphism classes of supersingular elliptic curves over $\Bar{\F}_p$.
		\end{enumerate}
	\end{thm}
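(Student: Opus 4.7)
The plan is to establish the statement in three stages: a structural dichotomy, a direct count of the product type, and an enumeration of the Jacobian type via a known mass formula.

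First, I would invoke the classical decomposition theorem of Weil (extended to positive characteristic by Oort--Ueno): every principally polarized abelian surface over an algebraically closed field is either the Jacobian of a smooth projective genus $2$ curve with its canonical polarization, or the product of two principally polarized elliptic curves with the product polarization $\{0\}\times E_2+E_1\times\{0\}$. Combined with Theorem~\ref{thm:doos}, the superspecial case then splits cleanly into the two types $\gJ_p$ and $\gE_p$ named in the theorem, and no $\psas$ belongs to both (because a Jacobian is geometrically irreducible while a product contains two elliptic curves as polarized subvarieties).

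Second, for the product type I would show that two $\psas$ of the form $[E_1\times E_2]$ and $[E_1'\times E_2']$ are isomorphic as polarized surfaces if and only if the unordered pairs $\{j(E_1),j(E_2)\}$ and $\{j(E_1'),j(E_2')\}$ coincide. The key observation is that the irreducible components of the theta divisor recover the two factors, so any polarized isomorphism permutes them and induces isomorphisms of elliptic curves on each factor. It follows that $\#\gE_p=\binom{S_{p^2}}{2}+S_{p^2}=\tfrac12 S_{p^2}(S_{p^2}+1)$ for $p>5$, while the small cases $p=2,3,5$ (where $S_{p^2}=1$) give $\#\gE_p=1$ by direct inspection.

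Third, for the Jacobian count I would not try to rederive the polynomial from scratch. Instead I would combine the Ekedahl--Hashimoto--Ibukiyama mass formula
\[
\sum_{\gA\in\gJ_p\sqcup\gE_p}\frac{1}{\#\Aut(\gA)}=\frac{(p-1)(p^2+1)}{5760}
\]
with the known automorphism orders of products $[E_1\times E_2]$ (which depend on whether $j(E_i)\in\{0,1728\}$ and on whether $E_1\cong E_2$) and of superspecial genus $2$ curves, and use Eichler's mass formula for $S_{p^2}$. The main obstacle is the bookkeeping for the special $j$-invariants $0$ and $1728$, whose contributions to the mass are weighted by $1/|\Aut|$ with large automorphism groups and whose presence or absence modulo $12$ dictates the correction terms: once these are tabulated, solving for $\#\gJ_p=\#(\gJ_p\sqcup\gE_p)-\#\gE_p$ produces the closed form $(p^3+24p^2+141p-346)/2880$ for $p>5$, and the sporadic values at $p=2,3,5$ follow by inspection (respectively the empty Jacobian locus for $p=2,3$ and the unique superspecial genus $2$ curve at $p=5$, e.g.\ the one with affine model $y^2=x^5-x$).
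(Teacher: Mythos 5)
The paper does not prove this theorem at all: it is stated as well-known, with the counts taken from the literature (essentially Ibukiyama--Katsura--Oort \cite{iko} for the Jacobian count; the product count is elementary), so there is no internal proof to compare against. Your first two steps are correct and are exactly the standard route: the Weil/Oort--Ueno dichotomy gives that every principally polarized abelian surface is a genus-$2$ Jacobian or a product of elliptic curves with the product polarization, and recovering the factors from the irreducible components of the theta divisor shows that product-type $\psas$ are classified by unordered pairs of supersingular $j$-invariants, giving $\tfrac12 S_{p^2}(S_{p^2}+1)$.

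The genuine gap is in your third step. The mass formula $\sum_{\gA}1/\#\Aut(\gA)=\frac{(p-1)(p^2+1)}{5760}$ together with $\#\gE_p$ and "known automorphism orders" does not determine $\#\gJ_p$: subtracting the (Eichler-computable) mass of $\gE_p$ only yields the \emph{weighted} count $\sum_{\gA\in\gJ_p}1/\#\Aut(\gA)$, and to pass from this to the actual number of Jacobians you must know, for every automorphism group that a superspecial genus-$2$ Jacobian can have, how many Jacobians realize it. That distribution is not a matter of tabulating the exceptional $j$-invariants $0$ and $1728$ (those only enter the product side); it is the substantial computation in \cite{iko}, carried out via class numbers of quaternion orders attached to curves with extra involutions, and it is precisely where the polynomial $\frac{p^3+24p^2+141p-346}{2880}$ comes from. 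As written, your plan silently assumes this classification-with-multiplicities as "known", so the argument is a citation outline rather than a derivation; either cite \cite{iko} for the Jacobian count outright (as the paper implicitly does) or supply the count of non-generic automorphism loci. The small cases $p=2,3,5$ are fine.
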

	
	\subsection{Relationship between isogenies and matrices} 
	Let $E$ be a fixed supersingular elliptic curve. It is well-known that  $\gO:=\End(E)$ is a maximal order in the quaternion algebra $B_{p,\infty}$ over $\Q$ ramified only at $p$ and $\infty$. 
	
	Suppose $g>1$. Then $E^g$ is a superspecial abelian variety of dimension $g$, equipped with the principal polarization $\{0\}\times E^{g-1}+\cdots + E^{g-1}\times \{0\}$. We have 
	$\End(E^g)=M_g(\gO)$ and
	\[ \Aut(E^g)=\GL_g(\gO)=\{M\in M_g(\gO)\mid M \ \text{is invertible}\}. \] 
	The reduced norm $\Nrd: \gO \rw \Z$ induces the reduced norm $\Nrd: M_g(\gO) \rw \Z$. Then we also have
	\[\GL_g(\gO)= \{M\in M_g(\gO)\mid \Nrd(M)= 1\}.\]

	Let $A$ be a superspecial abelian variety of dimension $g$.
	By Theorem~\ref{thm:doos}, $E^g$ and $A$ are isomorphic. Let $\iota_A: A\rw E^g$ be a fixed isomorphism which induces $\iota_A: \End(A)\cong M_g(\gO)$.  Note that another isomorphism $\iota_A'$ is uniquely determined by $\iota_A'\iota_A^{-1}\in \GL_g(\gO)$.  
	
	For   $M\in M_g(\gO)$, let $M^+$ denote the conjugate transpose of $M$. If $M$ is
	associated to the endomorphism $\alpha\in \End(A)$, then $M^{+}$ is the matrix associated to the Rosati involution  $\alpha^{\dagger}$ of $\alpha$. 
	
	Suppose $X$ is a principal polarized divisor of $A$. The map
	\[ \Pic(A)\rightarrow \End(A),\quad L\mapsto \lambda_{X}^{-1} \circ\lambda_L \]
	factors through the N\'eron-Severi group $\NS(A)= \Pic(A)/{\Pic^{0}(A)}$. Let
	\begin{equation}	j: \NS(A)  \rightarrow {\End} (A) \cong M_g(\gO); \qquad \overline{L} \rightarrow  \iota_A(\lambda_{X}^{-1} \circ \lambda_L).		\end{equation}  
	This map extends to  $j: \NS(A)\otimes \Q	\rw \End(A) \otimes \Q \cong M_g(\gO)\otimes \Q$. 
	
	\begin{prop}\cite[Proposition 14.2]{m}  \label{rosati1} The map $j$ is invariant under the Rosati involution, which implies that
		\[ j(\bar{L})=j(\bar{L})^{+}. \]
	\end{prop}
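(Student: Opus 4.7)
The plan is to unwind the definition of the Rosati involution and reduce the identity $j(\bar L) = j(\bar L)^+$ to the standard symmetry of the polarization map $\lambda_L$. With respect to the fixed principal polarization $\lambda_X$, the Rosati involution on $\End(A)$ is defined by
\[
\alpha^{\dagger} = \lambda_X^{-1} \circ \hat{\alpha} \circ \lambda_X,
\]
and the excerpt has already noted that this corresponds under $\iota_A$ to conjugate transpose $M \mapsto M^+$ on $M_g(\gO)$. So it suffices to establish the intrinsic identity $(\lambda_X^{-1} \circ \lambda_L)^{\dagger} = \lambda_X^{-1} \circ \lambda_L$ in $\End(A) \otimes \Q$.

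First I would record the key input: for every line bundle $L$ on $A$, the homomorphism $\lambda_L \colon A \to \hat A$ is symmetric in the sense that, under the canonical double duality $A \cong \hat{\hat A}$, one has $\hat{\lambda}_L = \lambda_L$. This is a classical fact from Mumford's \emph{Abelian Varieties} and applies in particular to $\lambda_X$ and hence, by taking inverses, to $\lambda_X^{-1}$, yielding $\widehat{\lambda_X^{-1}} = \lambda_X^{-1}$.

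Next I would compute directly. Functoriality of the dual isogeny gives
\[
(\lambda_X^{-1} \circ \lambda_L)^{\dagger} = \lambda_X^{-1} \circ \widehat{\lambda_X^{-1} \circ \lambda_L} \circ \lambda_X = \lambda_X^{-1} \circ \hat{\lambda}_L \circ \widehat{\lambda_X^{-1}} \circ \lambda_X.
\]
Substituting $\hat{\lambda}_L = \lambda_L$ and $\widehat{\lambda_X^{-1}} = \lambda_X^{-1}$ collapses the right-hand side to $\lambda_X^{-1} \circ \lambda_L = j(\bar L)$ itself. Transporting this equality through $\iota_A$ and using the dictionary $\dagger \leftrightarrow (\cdot)^+$ then gives $j(\bar L) = j(\bar L)^+$ in $M_g(\gO) \otimes \Q$.

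The main obstacle, such as it is, is the symmetry statement $\hat{\lambda}_L = \lambda_L$: this is not obvious from the naive description $\lambda_L(x) = t_x^* L \otimes L^{-1}$, but follows from the see-saw theorem and the compatibility of the Poincar\'e bundle with the N\'eron--Severi class. At the level of this preliminaries section I would simply cite Mumford and apply it; after that, the argument is a one-line manipulation of dual isogenies. A minor additional remark is that the identity shows $j(\bar L)$ lies in the fixed subspace of $\dagger$, which matches the expected rank of $\NS(A)\otimes \Q$ inside $\End(A)\otimes \Q$.
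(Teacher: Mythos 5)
Your proof is correct and follows essentially the same route as the source: the paper gives no argument of its own here, simply citing Milne's Proposition 14.2, and the argument behind that citation is precisely your reduction to the symmetry $\widehat{\lambda_L}=\lambda_L$ under double duality combined with functoriality of the dual isogeny and the dictionary $\dagger\leftrightarrow(\cdot)^{+}$ via $\iota_A$. Your computation $(\lambda_X^{-1}\circ\lambda_L)^{\dagger}=\lambda_X^{-1}\circ\widehat{\lambda_L}\circ\widehat{\lambda_X^{-1}}\circ\lambda_X=\lambda_X^{-1}\circ\lambda_L$ is sound, with the canonical identification $A\cong\widehat{\widehat{A}}$ handled correctly, so nothing is missing.
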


	The following result allows us to determine whether a divisor of an abelian variety corresponds to a (principal) polarization.
	\begin{prop}\cite[Proposition 2.8]{iko}
		Let $L$ be a divisor of an abelian variety $A$ of dimension $g$. Then
		\[\frac{L^g}{g!}=\chi(L)=\Nrd(j(\bar{L})), \qquad \chi(L)^2=\deg(\lambda_L),\] 
		and 
		\begin{enumerate}[1]
			\item $L$ is  associated to  a polarization (i.e. $L$ is an ample divisor) if and only if   $j(\bar{L})$ is positive definite;
			
			\item  $L$ is associated to a principal polarization if and only if   $j(\bar{L})$ is positive definite with reduced norm $1$. 
		\end{enumerate}
	\end{prop}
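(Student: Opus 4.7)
The plan is to assemble the four assertions from classical facts about line bundles on abelian varieties combined with the matrix-algebra description of $\End(A)\otimes\Q \cong M_g(B_{p,\infty})$. The identity $L^g/g! = \chi(L)$ is the Riemann--Roch theorem for abelian varieties (Mumford, Ch.~III, \S16), and the equality $\chi(L)^2 = \deg(\lambda_L)$ is a standard computation from the same source; both are invoked directly.

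For the central equality $\chi(L) = \Nrd(j(\bar{L}))$, I would work inside $\End(A)\otimes\Q$: since $j(\bar{L}) = \iota_A(\lambda_X^{-1}\circ \lambda_L)$ and $\lambda_X$ is a principal polarization (hence of degree $1$), the degree of the endomorphism $j(\bar{L})$ equals $\deg(\lambda_L) = \chi(L)^2$. By Proposition~\ref{rosati1} the element $j(\bar{L})$ is Hermitian (fixed by $M\mapsto M^+$), and on the Hermitian part of $M_g(\gO)$ the normalization of reduced norm used here is the Moore/Pfaffian-type invariant of degree $g$ (so for instance $\Nrd(nI_g)=n^g$, matching $\chi(nX)=n^g$); its square recovers the ordinary degree of the endomorphism. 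Extracting the positive square root yields $\Nrd(j(\bar{L}))=\chi(L)$. For the ampleness characterizations (1) and (2), I would invoke the classical theorem that the Rosati involution is a positive involution on $\End(A)\otimes\Q$, i.e.\ the form $(\alpha,\beta)\mapsto \Trd(\alpha\beta^\dagger)$ is positive definite. Restricting $j$ to $\NS(A)\otimes\Q$ yields a bijection onto the space of Hermitian elements of $M_g(B_{p,\infty})$, and the Riemann bilinear relations translate ampleness of $L$ into positive-definiteness of the Hermitian matrix $j(\bar{L})$. This gives (1); then (2) follows by combining (1) with the norm formula above, since a principal polarization is precisely an ample divisor with $\deg(\lambda_L)=1$, equivalently, $j(\bar{L})$ positive definite with $\Nrd(j(\bar{L}))=1$.

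The main obstacle is the precise matching between ampleness of $L$ and positive-definiteness of $j(\bar{L})$, which requires carefully identifying the ample cone in $\NS(A)\otimes\Q$ with the positive cone of Hermitian matrices in $M_g(B_{p,\infty})$. Once the positivity of the Rosati involution and the Riemann bilinear relations are in place, the remaining work is routine bookkeeping; one must also carefully track the normalization of $\Nrd$ on Hermitian matrices (degree $g$ rather than $2g$) so that $\chi$ and $\Nrd$ agree on the nose rather than merely up to squares.
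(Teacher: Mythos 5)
The paper offers no proof of this proposition: it is imported verbatim from Ibukiyama--Katsura--Oort \cite[Proposition 2.8]{iko}, so there is nothing internal to compare your argument with, and the right benchmark is the classical argument (IKO quoting Mumford) that your outline is clearly aiming at. Your skeleton is the standard one: Riemann--Roch $\chi(L)=L^g/g!$ and $\chi(L)^2=\deg(\lambda_L)$ from Mumford; Rosati-symmetry of $j(\bar L)=\iota_A(\lambda_X^{-1}\circ\lambda_L)$ via Proposition~\ref{rosati1}; and the observation that the norm used here on Hermitian matrices is the degree-$g$ (Moore-type) invariant whose square is the honest degree, which is exactly what makes $\Nrd(nI_g)=n^g$ match $\chi(nX)=n^g$ and the $g=2$ formula $ac-b\bar b=d$ come out right.

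The step you cannot obtain the way you describe is the equivalence in (1). ``Riemann bilinear relations'' is a complex-analytic statement; over $\overline{\F}_p$ there is no Hermitian form on a tangent space whose positivity characterizes ampleness, and positivity of the Rosati involution (positive-definiteness of the trace form $(\al,\be)\mapsto \Trd(\al\be^{\dagger})$) is strictly weaker than what is needed. The ingredient that actually does the work is Mumford's index/positivity theory for abelian varieties in arbitrary characteristic (\emph{Abelian Varieties}, \S 16 and \S 21): for a polarization $X$ the symmetric element $\al=\lambda_X^{-1}\lambda_L$ has totally real characteristic roots, and $L$ is ample if and only if all of these roots are positive; combined with the fact that a quaternion-Hermitian matrix is positive definite exactly when its (totally real) eigenvalues are positive, this gives (1), and then (2) follows as you say. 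A second, smaller point: ``extracting the positive square root'' only makes sense when $L$ is ample, since $\chi(L)$ can be negative in general; to get $\chi(L)=\Nrd(j(\bar L))$ with the correct sign for all $L$ one should add that both sides are degree-$g$ polynomial functions on $\NS(A)\otimes\Q$ whose squares agree and which take the value $1$ at $\bar L=\bar X$, hence coincide identically.
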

	
	Different choices of the isomorphism $\iota_A$ give different $j(\Bar{L})\in M_g(\gO)$, but they are related by the following result: 
	\begin{prop}\cite[Proposition 31]{jz1} Let \[\gH= \{H \in M_n(\gO) \mid H \ \text{is positive-definite Hermitian of reduced norm} \ 1\}.\] 
		Two matrices $H$ and $H'$ in $\gH$ correspond to the same polarized divisor if and only if they are in the same orbit 
		under the action of $\GL_g(\gO)$ on the set $\gH$:
		\[ \GL_g(\gO) \times \gH \rightarrow \gH;\ \ (M, H) \mapsto M^{+}HM. \]
		Moreover, there is a one-to-one correspondence between $\gH\big/\GL_g(\gO)$ and  the set of isomorphism classes of  $\psas$ of dimension $g$.
	\end{prop}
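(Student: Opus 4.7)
The plan is to use the map $j$ defined just above. Given any $\psav$ $(A,D)$ of dimension $g$, Theorem~\ref{thm:doos} provides an isomorphism $\iota_A\colon A\xrightarrow{\sim}E^g$, and pushing $D$ forward to $E^g$ produces a class whose image $j(\overline{\iota_{A,*}D})\in M_g(\gO)$ is Hermitian by Proposition~\ref{rosati1} and has reduced norm $1$ with positive-definite image by the preceding proposition; hence it lies in $\gH$. Since any other choice $\iota_A'$ differs by an element $M\in \Aut(E^g)=\GL_g(\gO)$, the key step will be to show that changing $\iota_A$ to $\phi_M\circ \iota_A$ replaces the associated matrix $H$ by $M^+HM$. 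This yields a well-defined map $\Psi\colon \{\psav\text{s of dim } g\}/{\cong}\longrightarrow \gH/\GL_g(\gO)$.

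The heart of the argument is the intertwining identity
\[ j(\overline{\phi_M^*L})\;=\;M^+\,j(\bar L)\,M\qquad (M\in\GL_g(\gO),\ \bar L\in\NS(E^g)). \]
I would prove this by combining the functoriality $\lambda_{\phi_M^*L}=\hat{\phi}_M\circ\lambda_L\circ\phi_M$ with the fact that, under the identification $\End(E^g)\cong M_g(\gO)$ using the standard principal polarization $X$ as the reference, the Rosati involution $\alpha\mapsto \lambda_X^{-1}\circ \hat{\alpha}\circ \lambda_X$ is exactly the conjugate transpose $M\mapsto M^+$. Substituting into $j(\bar L)=\iota(\lambda_X^{-1}\circ \lambda_L)$ then yields the claimed formula. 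This identity immediately delivers both the well-definedness of $\Psi$ and its injectivity: an isomorphism $\phi_M\colon (E^g,D_1)\xrightarrow{\sim}(E^g,D_2)$ of $\psav$s amounts to $\phi_M^*D_2\sim D_1$, which translates into $H_1=M^+H_2M$.

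For surjectivity of $\Psi$ I would reverse the construction: given $H\in\gH$, view it as the image of a unique class in $\NS(E^g)\otimes \Q$ under the extension of $j$; positive-definiteness together with $\Nrd(H)=1$ ensures by the preceding proposition that this class is represented by a principal polarization divisor on $E^g$, producing a $\psav$ whose associated matrix is $H$. The main technical obstacle is verifying the intertwining identity, which requires carefully matching two compatibilities, namely Rosati involution $=$ conjugate transpose and the pullback formula $\lambda_{\phi^*L}=\hat{\phi}\circ \lambda_L\circ \phi$, with the specific identifications fixed earlier. Both are standard, but the signs and orderings must be tracked with care; once verified, the remainder of the argument is formal.
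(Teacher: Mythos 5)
The paper itself gives no proof of this proposition --- it is quoted verbatim from \cite[Proposition 31]{jz1} --- so there is no internal argument to compare with; what you propose is essentially the standard argument underlying that citation. Your two main ingredients are sound: every $\psav$ of dimension $g$ is carried to $(E^g,\ \iota_{A,*}D)$ by Theorem~\ref{thm:doos}, and the intertwining identity $j(\overline{\phi_M^*L})=M^+\,j(\bar L)\,M$ follows, exactly as you say, from $\lambda_{\phi^*L}=\hat{\phi}\circ\lambda_L\circ\phi$ together with the fact that the Rosati involution attached to the product principal polarization is the conjugate transpose (Proposition~\ref{rosati1}); this identity does give well-definedness and injectivity of your map $\Psi$ (the harmless ambiguity between $M$ and $M^{-1}$ does not affect the orbit statement).

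The genuine gap is in the surjectivity step. You take $H\in\gH$, pass to $\NS(E^g)\otimes\Q$, and then invoke \cite[Proposition 2.8]{iko}; but that proposition takes an actual divisor (class) as input and only decides whether it is a principal polarization --- it neither tells you that $H$ lies in the image of the rational extension of $j$ (for that you would need that $j\otimes\Q$ maps $\NS(E^g)\otimes\Q$ \emph{onto} the Hermitian elements of $M_g(\gO)\otimes\Q$, which you do not argue), nor, more importantly, that the resulting rational class is integral, i.e.\ comes from an honest divisor class in $\NS(E^g)$. Without integrality you do not produce a polarization at all. The correct ingredient is the integral statement that $j$ identifies $\NS(E^g)$ with the Hermitian matrices in $M_g(\gO)$: for the case relevant to this paper, $g=2$, this is precisely Proposition~\ref{corre_L_M} (take $d=1$ and then apply \cite[Proposition 2.8]{iko} to conclude the corresponding $L$ is a principal polarization), and for general $g$ it is proved in \cite{iko} and \cite{jz1}. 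Replacing your $\NS\otimes\Q$ step by this integral correspondence closes the gap; the rest of the argument is formal and correct.
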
	
	
	Applying the above results to the case $g=2$, we have
	\begin{prop}\cite[Corollary 2.9]{iko}\label{corre_L_M}
		For $g=2$ and  $d\in \Z_+$, there is a one-to-one correspondence 
		\begin{align*}
			\{ \bar{L}\in \NS(A) \mid L>0, L^2=2d\} &\rightarrow \left\{ \begin{pmatrix} a & b \\ \bar{b} & c \end{pmatrix} \in M_2({\gO}) \mid a,c \in \Z_+, ac-b\bar{b}=d \right \}\\
			\bar{L} \qquad &\mapsto \qquad j(\bar{L}).
		\end{align*}
	\end{prop}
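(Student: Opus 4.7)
The plan is to derive this correspondence as a direct consequence of the two preceding propositions, with the main work being a careful translation of the Hermitian condition for $2\times 2$ matrices over $\gO$.

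First I would check that the forward map lands in the stated set. Given $\bar L\in \NS(A)$ with $L>0$ and $L^2=2d$, the matrix $j(\bar L)\in M_2(\gO)$ is Hermitian by Proposition~\ref{rosati1}, so it has the form $\begin{pmatrix} a & b\\ b' & c\end{pmatrix}$ with $a=\bar a$, $c=\bar c$, and $b'=\bar b$. Since the subring of $\gO$ fixed by the canonical involution of $B_{p,\infty}$ is exactly $\Z$, this forces $a,c\in \Z$. For a $2\times 2$ Hermitian matrix over a quaternion order, the reduced norm is computed as $ac-b\bar b$, so $ac-b\bar b = \Nrd(j(\bar L))=\chi(L)=L^2/2=d$. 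Positive definiteness of $j(\bar L)$, which follows from ampleness of $L$ by the preceding proposition, yields $a>0$ and forces the determinant $ac-b\bar b=d>0$, hence also $c>0$.

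Next I would establish injectivity. The map $\bar L\mapsto \lambda_L$ is injective on $\NS(A)$ by definition of the N\'eron--Severi group, and $j$ is obtained by composing this with the isomorphism $\lambda_X^{-1}$ (which is an isomorphism because $X$ is a principal polarization) and the fixed isomorphism $\iota_A$, so $j$ is injective.

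For surjectivity, given a Hermitian matrix $H=\begin{pmatrix} a & b\\ \bar b & c\end{pmatrix}\in M_2(\gO)$ with $a,c\in \Z_+$ and $ac-b\bar b=d$, I would need to produce a class $\bar L\in \NS(A)$ with $j(\bar L)=H$. This is the main obstacle. Working rationally, the extension $j:\NS(A)\otimes \Q\to \End(A)\otimes \Q$ is a bijection onto the Rosati-fixed subspace, so some $\bar L\in \NS(A)\otimes \Q$ maps to $H$; the delicate point is the integrality of $\bar L$. One would invoke the superspecial structure $A\cong E^g$ together with the classical identification of $\NS(E^g)$ with the $\Z$-lattice of Hermitian matrices in $M_g(\gO)$ (for the standard product principal polarization $X$), which places $\bar L$ in $\NS(A)$ as soon as $H\in M_2(\gO)$. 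The conditions $a,c\in \Z_+$ and $ac-b\bar b=d>0$ then imply that $H$ is positive definite, so by the preceding proposition $L$ is ample with $L^2=2\chi(L)=2\Nrd(H)=2d$, closing the loop.
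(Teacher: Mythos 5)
The paper gives no proof of this statement at all --- it is quoted directly from \cite[Corollary 2.9]{iko} --- so there is no internal argument to compare yours against, and I can only judge your reconstruction on its own terms. In outline it is the standard derivation and is correct: the Hermitian shape of $j(\bar L)$ comes from Proposition~\ref{rosati1}; $a,c\in\Z$ because the conjugation-fixed part of $\gO$ is $\Z$; $ac-b\bar b=\Nrd(j(\bar L))=\chi(L)=L^2/2=d$ by the quoted result of \cite{iko}; positivity of $a,c$ follows from positive definiteness (and conversely $a>0$, $ac-b\bar b>0$ give definiteness); and injectivity holds because $\Pic^0(A)$ is precisely the kernel of $L\mapsto\lambda_L$. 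The soft spot is exactly where all the content sits: surjectivity. Invoking ``the classical identification of $\NS(E^g)$ with the $\Z$-lattice of Hermitian matrices in $M_g(\gO)$'' is essentially invoking the statement being proved, minus the positivity bookkeeping; it is legitimate only if you treat it as a citable input (it follows, for the product polarization, from the decomposition $\Pic(E\times E)\cong p_1^*\Pic(E)\oplus p_2^*\Pic(E)\oplus\Hom(E,E)$, or from Mumford's theorem \cite{mu} that over $\overbar{\F}_p$ every symmetric homomorphism $A\to\hat A$ equals $\lambda_L$ for some line bundle $L$), and once you do invoke it the preliminary detour through $\NS(A)\otimes\Q$ is superfluous. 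So: acceptable as a derivation from known facts, with the caveat that the key integrality/surjectivity input is cited rather than argued --- which, to be fair, is also what the paper itself does by deferring entirely to \cite{iko}.
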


	\subsection{The $(\ell, \ell)$-isogeny graph of $\psas$}
	From now on, suppose $p>3$ and  $\ell$ is a prime different from $p$. Suppose $g=2$, i.e. we are dealing with abelian surfaces.
	
	Let $\gA_1= (A, D_1)$ and $\gA_2= (A, D_2)$ be two principally polarized abelian surfaces over $\overline{\F}_p$. An $(\ell, \ell)$-isogeny is an isogeny $\phi:\gA_1\to \gA_2$  such that $\ker(\phi)=\Z/{\ell \Z} \times \Z/{\ell \Z}$. For an $(\ell, \ell)$ isogeny $\phi$, there exists a dual isogeny $\hat{\phi}:\gA_2\rightarrow \gA_1$ such that $\hat{\phi} \circ \phi=[\ell]$.
	
	We can describe $(\ell,\ell)$ isogenies using  matrices in $M_2(\gO)$ in the following proposition, whose proof  was given in \cite[Proposition 31]{jz1}. 
	\begin{prop} Let $A$ be a superspecial abelian surface, $P_1$ and $P_2$ be two principal polarizations of $A$. Let $H_1=j(\Bar{P_1})$ and $H_2=j(\Bar{P_2})$. If $\alpha : A \rightarrow A$ is an isogeny of degree $\ell^{2m}$ associated to  $M\in M_2(\gO)$, then  
		$\al^*(P_2)= \ell^m P_1$  if and only if $M^{+}H_2M=\ell^m H_1$, and in this case, $\al$ is an isogeny from $(A, P_1)$ to $(A, P_2)$.  
	\end{prop}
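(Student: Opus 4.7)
The strategy is to translate the geometric equation $\alpha^*(P_2)=\ell^m P_1$ into the algebraic matrix equation $M^{+}H_2M=\ell^m H_1$ via the map $j$, and then invert this translation by invoking the injectivity of $j$ on N\'eron--Severi classes.

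First I would fix the auxiliary principal polarization $X$ of $A$ coming from the chosen isomorphism $\iota_A : A \to E^g$ (the pull-back of the standard product polarization $\{0\}\times E^{g-1}+\cdots+E^{g-1}\times\{0\}$), and apply the standard formula $\lambda_{\alpha^*L}=\hat{\alpha}\circ\lambda_L\circ\alpha$ with $L=P_2$. Composing with $\lambda_X^{-1}$ on the left and inserting $\lambda_X\circ\lambda_X^{-1}$ between $\hat\alpha$ and $\lambda_{P_2}$ produces the key identity
\[ j(\overline{\alpha^*P_2}) \;=\; \alpha^{\dagger}\circ j(\overline{P_2})\circ \alpha \;=\; \alpha^{\dagger}\circ H_2\circ \alpha, \]
where $\alpha^{\dagger}=\lambda_X^{-1}\circ\hat\alpha\circ\lambda_X$ is the Rosati involution with respect to $X$. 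Since the preparatory material of the excerpt identifies $\alpha^{\dagger}$ with the conjugate transpose $M^{+}$, and since $j(\overline{P_i})=H_i$ by definition, this identity reads $j(\overline{\alpha^*P_2})=M^{+}H_2M$ at the matrix level.

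For the forward direction, assuming $\alpha^*(P_2)\sim \ell^m P_1$, applying $j$ to both sides immediately yields $M^{+}H_2M=\ell^m H_1$. For the converse, I would invoke Proposition~\ref{corre_L_M}: both $\overline{\alpha^*P_2}$ and $\overline{\ell^m P_1}$ are positive classes with self-intersection $2\ell^{2m}$ (the former because $\deg\alpha=\ell^{2m}$ and $P_2^2=2$), so they lie in a common slice on which $j$ is injective. Hence the identity $M^{+}H_2M=\ell^m H_1$ forces $\alpha^*(P_2)\sim \ell^m P_1$. The closing clause that $\alpha$ is then an isogeny $(A,P_1)\to(A,P_2)$ is tautological: this pull-back compatibility is exactly what it means to be a morphism in the polarized category.

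The only subtle bookkeeping point is to keep the choice of auxiliary polarization $X$ straight, since the identification $\alpha^{\dagger}\leftrightarrow M^{+}$ hinges precisely on $X$ being the pull-back of the product polarization via $\iota_A$; any other $X$ would twist $M^{+}$ by a Hermitian conjugation. Once this convention is fixed, the whole proof is a formal manipulation of $\lambda_{(-)}$ combined with Proposition~\ref{rosati1} and the injectivity statement of Proposition~\ref{corre_L_M}, so I do not expect a serious obstacle beyond setting up the identifications cleanly.
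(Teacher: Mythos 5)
The paper does not actually prove this proposition---it is stated with the remark ``whose proof was given in \cite[Proposition 31]{jz1}''---so there is no in-paper proof to compare against. That said, your reconstruction is correct and is essentially the argument one would find (and expect) in the cited source. The chain $j(\overline{\alpha^*P_2})=\lambda_X^{-1}\hat\alpha\lambda_{P_2}\alpha=(\lambda_X^{-1}\hat\alpha\lambda_X)(\lambda_X^{-1}\lambda_{P_2})\alpha=\alpha^\dagger\circ H_2\circ\alpha$ correctly turns the pullback identity $\lambda_{\alpha^*L}=\hat\alpha\lambda_L\alpha$ into $M^+H_2M$, using the paper's dictionary $\alpha^\dagger\leftrightarrow M^+$; and the converse via injectivity of $j$ is sound---indeed $j$ is injective on all of $\NS(A)$ since $\bar L\mapsto\lambda_L$ is, so you do not even need to restrict to the slice $\{L>0,\,L^2=2\ell^{2m}\}$ from Proposition~\ref{corre_L_M}, though that slice-level bijection certainly suffices. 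Your concluding caveat about fixing the reference polarization $X$ so that $\alpha^\dagger$ really corresponds to $M^+$ is exactly the right bookkeeping point to flag.
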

	
	\begin{defn}
		The $(\ell, \ell)$-isogeny graph of principally polarized superspecial abelian surfaces, denoted as $\gG_p=\gG_{p,\ell}$, is the graph whose  vertices set $V$ is the set of  $\overline{\F}_p$-isomorphism classes of $\psas$ and whose edge set $E$ is the set of   equivalence classes of $(\ell,\ell)$-isogenies.
	\end{defn}
	
	Note that two $(\ell,\ell)$-isogenies are equivalent if they have the same kernel. By Theorem~\ref{theorem:3. 13} and Proposition~\ref{prop:lniso}, the number of non-equivalent $(\ell,\ell)$-isogenies from a principally polarized superspecial abelian surface $A$ to another is equal to the number of maximal $\ell$-isotropic subgroups of $A$, which is $(\ell+1)(\ell^2+1)$. Hence 
	\begin{lemma}
		The out-degree of every vertex in $\gG_{p}$  is $(\ell+1)(\ell^2+1)$.
	\end{lemma}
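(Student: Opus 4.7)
The plan is to reduce the count of outgoing edges at a fixed vertex $\gA = (A,D)$ to a count of subgroups of $A[\ell]$, then invoke the two preliminary results to finish. Concretely, equivalence of $(\ell,\ell)$-isogenies is defined by equality of kernels, so the out-degree of $[\gA]$ in $\gG_p$ equals the number of subgroups $S \subseteq A[\ell]$ that arise as the kernel of some $(\ell,\ell)$-isogeny $\phi : \gA \to \gA'$ into another PPSAS.

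First I would identify precisely which subgroups $S \subseteq A[\ell]$ can serve as such kernels. Any $(\ell,\ell)$-isogeny, by definition, has $S \cong \Z/\ell\Z \times \Z/\ell\Z$; moreover, since the target $\gA' = (A/S, D')$ must again be principally polarized with $\phi^* D' \sim \ell D$, Theorem~\ref{theorem:3. 13} (applied with $m = \ell$) forces $S$ to be a maximal $\ell$-isotropic subgroup of $A[\ell]$ for the Weil pairing. Conversely, given any maximal $\ell$-isotropic $S \subseteq A[\ell]$, the same theorem produces a principal polarization $D'$ on $A/S$ with $\phi^* D' \sim \ell D$, and so the quotient isogeny $\phi : \gA \to (A/S, D')$ is an honest $(\ell,\ell)$-isogeny with kernel $S$. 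Thus the kernels of outgoing $(\ell,\ell)$-isogenies are in bijection with the maximal $\ell$-isotropic subgroups of $A[\ell]$.

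Finally I would apply Proposition~\ref{prop:lniso} in the case $n = 1$: it states that every maximal $\ell$-isotropic subgroup of $A[\ell]$ is of the form $\Z/\ell\Z \times \Z/\ell\Z$ (the only possibility available in the classification when $n=1$), and that the total number of such subgroups equals $(\ell+1)(\ell^2+1)$. Combining this count with the bijection established in the previous step yields the claimed out-degree.

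No serious obstacle is anticipated here, since the statement is essentially a direct corollary of the two results already recalled in the preliminaries; the only point meriting explicit mention is the equivalence-by-kernel convention for edges in $\gG_p$, which ensures that the count of kernels really is the count of edges rather than a count of isogenies up to some coarser relation.
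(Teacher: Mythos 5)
Your proposal is correct and follows essentially the same route as the paper: the paper likewise observes that edges are counted by kernels, invokes Theorem~\ref{theorem:3. 13} to identify these kernels with maximal $\ell$-isotropic subgroups of $A[\ell]$, and then reads off the count $(\ell+1)(\ell^2+1)$ from the $n=1$ case of Proposition~\ref{prop:lniso}. Your write-up merely makes the bijection and the converse direction (existence of the polarization on $A/S$) more explicit than the paper does.
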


	\begin{defn} Let $\va: E_1\times E_2 \rw A_1$  be an edge in $\mathcal{G}_p$.  
		
		If $\{(P,Q), (P',Q')\}$ is an $\F_{\ell}$-basis of  $\ker(\va)$, we call $\begin{pmatrix}P & P'\\ Q & Q' \end{pmatrix}$ a generator matrix of $\ker(\va)$. The isogeny $\varphi$  is called  diagonal if $\ker(\va)$ has a diagonal generator matrix, and is called  non-diagonal if otherwise.
	\end{defn}
	\begin{lemma} \label{lemma:diagonal}   Let $\va: E_1\times E_2 \rw A_1$  be an edge in $\mathcal{G}_p$. 	For $i=1, 2$, let $K_i =\im(\ker(\va)\hookrightarrow (E_1\times E_2)[\ell]\rw E_i[\ell])$.  Then the followings are equivalent:
		\begin{enumerate}[i]
			\item $\va$ is diagonal.
			\item $\dim K_1=1$.
			\item  $\dim K_2=1$.
			\item There exists some $0\neq P\in E_1[\ell]$ such that $(P,0)\in \ker(\va)$.
			\item There exists some $0\neq Q\in E_2[\ell]$ such that $(0,Q)\in \ker(\va)$. 
		\end{enumerate} 
		In this case $\ker(\va)=K_1\times K_2$.
	\end{lemma}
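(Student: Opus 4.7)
The plan is to analyze $\ker(\va)$ as a $2$-dimensional $\F_\ell$-vector space (its structure being $\Z/\ell\Z\times\Z/\ell\Z$ by Proposition~\ref{prop:lniso} with $n=1$) through its intersections with the two coordinate summands. Setting
\[ L_1=\ker(\va)\cap(E_1[\ell]\times\{0\}), \qquad L_2=\ker(\va)\cap(\{0\}\times E_2[\ell]), \]
the projection $\ker(\va)\to E_i[\ell]$ has image $K_i$ and kernel $L_{3-i}$, so rank--nullity yields $\dim K_i=2-\dim L_{3-i}$.

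Next I would bound $\dim L_i\leq 1$: if $\dim L_1=2$ then $\ker(\va)=E_1[\ell]\times\{0\}$, but the Weil pairing $e_\ell$ on $(E_1\times E_2)[\ell]$ attached to the product principal polarization restricts to the (non-degenerate) Weil pairing of $E_1$ on $E_1[\ell]\times\{0\}$, contradicting the maximal $\ell$-isotropy of $\ker(\va)$ provided by Theorem~\ref{theorem:3. 13}; similarly for $L_2$. Consequently (ii) ($\dim K_1=1$) is equivalent to $\dim L_2=1$, which is (v); likewise (iii)$\Leftrightarrow$(iv).

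The heart of the proof is the equivalence (iv)$\Leftrightarrow$(v), again obtained from maximal $\ell$-isotropy. Assuming (iv), choose a generator $(P,0)$ of $L_1$ and complete to a basis $\{(P,0),(P',Q')\}$ of $\ker(\va)$. Using the standard factorization
\[ e_\ell\bigl((R_1,R_2),(R_1',R_2')\bigr)=e_\ell^{E_1}(R_1,R_1')\cdot e_\ell^{E_2}(R_2,R_2') \]
of the Weil pairing under the product polarization, the isotropy condition $e_\ell((P',Q'),(P,0))=1$ forces $e_\ell^{E_1}(P',P)=1$, hence $P'\in\la P\ra$ by non-degeneracy of the Weil pairing on $E_1[\ell]$. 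Then $(P',Q')-a(P,0)=(0,Q')\in\ker(\va)$ for the appropriate $a\in\F_\ell$, with $Q'\neq 0$ (else $(P',Q')\in L_1$), which gives (v); the reverse implication is symmetric.

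Finally, once both $L_1$ and $L_2$ are one-dimensional they span the $2$-dimensional space $\ker(\va)$, so $\ker(\va)=L_1\oplus L_2=K_1\times K_2$ with basis $\{(P,0),(0,Q')\}$, a diagonal generator matrix; this establishes (i) together with the final assertion. The implication (i)$\Rightarrow$(iv) is immediate. The main obstacle is justifying the factorization of the Weil pairing on $E_1\times E_2$ with respect to the product principal polarization as the product of the Weil pairings on the individual factors; this classical fact about polarizations is the essential ingredient that makes both the dimension bound $\dim L_i\leq 1$ and the orthogonality argument (iv)$\Rightarrow$(v) work.
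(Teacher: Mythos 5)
Your argument is correct and follows essentially the same route as the paper's proof: both rest on $\ker(\va)$ being a $2$-dimensional maximal isotropic subspace, the factorization of the Weil pairing of the product polarization as the product of the Weil pairings on $E_1[\ell]$ and $E_2[\ell]$, and nondegeneracy on each factor to force the complementary coordinate to collapse to a line. Your rank--nullity bookkeeping with $L_1,L_2$ is only a reorganization of the paper's basis manipulation, though it does make explicit the exclusion of $\ker(\va)=E_1[\ell]\times\{0\}$ (equivalently $Q'\neq 0$), a point the paper passes over quickly.
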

	\begin{proof} If $\va$ is diagonal, clearly $\dim K_1= \dim K_2=1$ and $\ker(\va)=K_1\times K_2$ by definition.
		
		Suppose $\{(P, Q), (P', Q')\}$ is a basis of $\ker(\va)$ over $\F_\ell$. 
		
		If $\dim K_1=1$, then $P$ and $P'$ are linearly dependent and not both zero.
		We may assume $P'=aP$ and replace  $(P', Q')$ by $(0, Q'-aQ)$, then  $\ker(\va)$ has a basis of the form  $\{(P, Q), (0, Q')\}$ and in particular $Q'\neq 0$. Since  $\ker(\va)$ is a maximal isotropic subgroup, the trivial Weil pairing  $e_{\ell}((P, Q), (0, Q'))= e_{\ell}(P, 0)e_{\ell}(Q, Q')=1$ means  $e(Q, Q')=1$.  Then $Q$ and $Q'$ must be linearly dependent too. Thus $K_2=\la Q'\ra$,  $\ker(\va)=K_1\times K_2$ has a basis of the form $\{(P,0), (0,Q')\}$ and $\va$ is diagonal. 
		
		By symmetry, if $\dim K_2=1$, we also have $\dim K_1=1$, $\ker(\va)=K_1\times K_2$ and $\va$ is diagonal.		Thus (i), (ii), (iii) are equivalent.
		
		Clearly (i) $\Rightarrow$ (iv). On the other hand, we extend $(P,0)$ to a basis $\{(P,0), (P', Q')\}$ of $\ker(\va)$. Then $K_2$ is $1$-dimensional. Hence we have (iv) $\Rightarrow$ (i). Similarly we have (i) $\Leftrightarrow$ (v). 
	\end{proof}
	
	\begin{cor} \label{cor:isogenies} Among isogenies from $E_1\times E_2$ in $\gG_p$, 
		$(\ell+1)^2$ are diagonal and $\ell^3-\ell$ are non-diagonal.
	\end{cor}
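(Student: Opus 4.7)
The plan is to count diagonal kernels directly and then subtract from the total to obtain the count of non-diagonal ones, using Lemma~\ref{lemma:diagonal} together with Proposition~\ref{prop:lniso}.

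First I would observe that, by Lemma~\ref{lemma:diagonal}, a diagonal edge $\va$ from $E_1\times E_2$ has kernel of the form $K_1\times K_2$, where $K_i$ is a one-dimensional $\F_\ell$-subspace of $E_i[\ell]\cong \F_\ell^{\,2}$. Conversely, for any choice of such $K_1,K_2$, the subgroup $K_1\times K_2\subseteq (E_1\times E_2)[\ell]$ has order $\ell^2$ and is maximal $\ell$-isotropic: the Weil pairing on $(E_1\times E_2)[\ell]$ decomposes as the product of the Weil pairings on the two factors, each $K_i$ is automatically isotropic in $E_i[\ell]$ because the Weil pairing is alternating on a one-dimensional subspace, and mixed pairs $((P,0),(0,Q'))$ give trivial contributions. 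Hence by Theorem~\ref{theorem:3. 13} each such $K_1\times K_2$ is the kernel of a unique (up to equivalence) $(\ell,\ell)$-isogeny.

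Next I would count: the number of lines in $E_i[\ell]\cong \F_\ell^{\,2}$ is $\ell+1$, so the number of diagonal kernels from $E_1\times E_2$ is
\[
(\ell+1)\cdot(\ell+1)=(\ell+1)^2.
\]
By Proposition~\ref{prop:lniso} (applied with $n=1$), the total out-degree from $E_1\times E_2$ in $\gG_p$ is $(\ell+1)(\ell^2+1)$, so the number of non-diagonal edges equals
\[
(\ell+1)(\ell^2+1)-(\ell+1)^2=(\ell+1)\bigl((\ell^2+1)-(\ell+1)\bigr)=(\ell+1)\ell(\ell-1)=\ell^3-\ell.
\]

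There is no real obstacle here; the only minor point that needs mention is verifying that every candidate $K_1\times K_2$ actually corresponds to a maximal $\ell$-isotropic subgroup (so nothing is overcounted or lost), which is immediate from the product decomposition of the Weil pairing and the alternating property.
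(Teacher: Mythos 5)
Your proposal is correct and follows essentially the same route as the paper: count the $(\ell+1)^2$ pairs of lines $K_1\subseteq E_1[\ell]$, $K_2\subseteq E_2[\ell]$ giving diagonal kernels, then subtract from the total out-degree $(\ell+1)(\ell^2+1)$ of Proposition~\ref{prop:lniso} to get $\ell^3-\ell$ non-diagonal isogenies. Your explicit check that each $K_1\times K_2$ is maximal $\ell$-isotropic is a reasonable extra detail that the paper leaves implicit.
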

	\begin{proof}
		There are $\ell+1$ choices for the $1$-dimensional subspaces $K_1$ of $E_1[\ell]$ and $K_2$ of $E_1[\ell]$ respectively, so there are  $(\ell+1)^2$ diagonal isogenies and $(\ell+1)(\ell^2+1)- (\ell+1)^2=\ell^3-\ell$ non-diagonal ones. 
	\end{proof}

	As in \cite{cds}, we can define the extension of $(\ell, \ell)$- isogenies:
	
	\begin{defn}
		Let $\gA_i=(A, D_i)$ ($i=0, 1, 2$), $\va_1: \gA_0 \rw \gA_1$ and $\va_2: \gA_1 \rw \gA_2$ be edges in $\mathcal{G}_p$.
		\begin{enumerate}[1]
			\item The isogeny $\va_2$ is called a dual extension of $\va_1$ if $\ker(\va_2\circ\va_1)\cong (\Z/\ell\Z)^4$, in this case,  $\ker(\va_2) = \va_1(A[\ell])$.
			\item The isogeny $\va_2$ is called a bad extension of $\va_1$ if $\ker(\va_2\circ\va_1)\cong \Z/\ell^2\Z\times (\Z/\ell\Z)^2$, in this case,  $\ker(\va_2) \cap \va_1(A[\ell])\cong \Z/\ell\Z$.
			\item The isogeny $\va_2$ is called a good extension of $\va_1$ if $\ker(\va_2\circ\va_1)\cong  (\Z/\ell^2\Z)^2$, in this case,  $\ker(\va_2) \cap \va_1(A[\ell])= 0$.
		\end{enumerate}		
	\end{defn}

	\section{Loops and neighbors of $[E_{1728}\times E_{1728}]$}
	
	\subsection{Basic facts}
	In this section, let $E=E_{1728}$ be the supersingular elliptic curve defined over $\F_p$  with $j$-invariant $1728$ (which implies that $p\equiv 3\bmod{4}$). We know its endomorphism ring is
	\[ \gO=\gO_{1728} ={\Z}+{\Z} i+{\Z}\frac{1+j}{2}+{\Z}\frac{i+k}{2}, \]
	where $	i^2=-1,\ j^2=-p,\ ij=-ji=k$.
	Note that the reduced norm on $\gO$ is given by
	\begin{equation} \label{eq:nrd1} \Nrd\left(x+yi+z \frac{1+j}{2}+ w\frac{i+k}{2}\right )= \left(x+\frac{z}{2}\right)^2+ \left(y+\frac{w}{2}\right)^2+\frac{p(z^2+w^2)}{4}. \end{equation}

	Let $[E\times E]=[E_{1728}\times E_{1728}]$ be the superspecial abelian surface $E\times E$ with the principal polarization $\{0\}\times E+ E\times \{0\}$ in the isogeny graph $\gG_p$. 
	
	For $n\in \Q^\times$, let $\sigma(n)=\sigma_1(n)$ be the sum of positive divisors of $n$ if $n\in \Z_+$ and $\sigma(n)=0$ if otherwise.	
	Recall 
	\begin{lem}[Jacobi]\label{dio1}
		For $n\in \Z_+$, the number of integer solutions of  $x^2+y^2+z^2+w^2=n$ is 
		\[ 8\sigma(n)-32\sigma(\frac{n}{4})=8\sum_{d\mid n,\ 4\nmid d} d. \]
		Particularly, there are $24$ integer solutions of $x^2+y^2+z^2+w^2=2$.
	\end{lem}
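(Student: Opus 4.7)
The plan is to prove this identity via the theta-series generating function. First I would set $\theta(\tau) := \sum_{n\in\Z} q^{n^2}$ with $q = e^{2\pi i \tau}$ and observe that the number $r_4(n) := \#\{(x,y,z,w)\in \Z^4 : x^2+y^2+z^2+w^2 = n\}$ is exactly the $q^n$-coefficient of $\theta(\tau)^4$. The lemma then amounts to an identity of formal power series, which I would establish as an identity of holomorphic modular forms on $\Gamma_0(4)$.

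The key structural input is that $\theta$ is a modular form of weight $1/2$ for $\Gamma_0(4)$, and so $\theta^4$ lies in $M_2(\Gamma_0(4))$. A standard dimension calculation gives $\dim M_2(\Gamma_0(4)) = 2$ and $S_2(\Gamma_0(4)) = 0$, so the whole space is spanned by Eisenstein series. Convenient generators are $G_1(\tau) := E_2(\tau) - 2E_2(2\tau)$ and $G_2(\tau) := E_2(\tau) - 4E_2(4\tau)$, where $E_2(\tau) = 1 - 24\sum_{n\geq 1}\sigma(n)q^n$ is the usual weight $2$ quasi-modular Eisenstein series; the combinations above are genuinely modular of weight $2$ on $\Gamma_0(4)$ because the non-modular correction terms cancel.

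Next I would write $\theta^4 = \alpha G_1 + \beta G_2$ and solve for $\alpha,\beta$ by matching the $q^0$- and $q^1$-coefficients, using $r_4(0) = 1$ and $r_4(1) = 8$ (the eight sign-and-position permutations of $(\pm 1, 0, 0, 0)$). Reading off the $q^n$-coefficient of the resulting linear combination yields $r_4(n) = 8\sigma(n) - 32\sigma(n/4)$, with the convention $\sigma(n/4) := 0$ unless $4\mid n$; splitting the divisor sum over $n$ according to whether $4\mid d$ converts this into the form $8\sum_{d\mid n,\, 4\nmid d} d$. For $n=2$ the formula gives $8(1+2) = 24$, which is also directly visible by inspection: the representations are exactly the sign-and-index permutations of $(\pm 1, \pm 1, 0, 0)$, giving $\binom{4}{2}\cdot 2^2 = 24$ solutions.

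The main obstacle in this plan is establishing the modularity of $\theta$ and pinning down the structure of $M_2(\Gamma_0(4))$ — both classical but non-trivial. An alternative elementary route avoiding modular forms would be to use the Jacobi triple product to derive the Lambert-type identity $\theta(q)^4 = 1 + 8\sum_{n\geq 1}\frac{nq^n}{1+(-q)^n}$ directly, and then expand the right-hand side to show that its $q^n$-coefficient equals $8\sum_{d\mid n,\, 4\nmid d} d$; this is elementary but requires more delicate manipulation of theta identities and partial-fraction expansions.
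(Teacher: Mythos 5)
The paper does not prove this lemma at all: it is quoted as Jacobi's classical four-square theorem and used as a black box, so there is no internal proof to compare against. Your modular-forms argument is a correct and standard route to it. The structural facts you invoke (modularity of $\theta$ in weight $1/2$ on $\Gamma_0(4)$, $\dim M_2(\Gamma_0(4))=2$, $S_2(\Gamma_0(4))=0$, and that $E_2(\tau)-NE_2(N\tau)$ is genuinely modular on $\Gamma_0(N)$) are classical, and the linear algebra closes: with $G_1=E_2(\tau)-2E_2(2\tau)=-1-24q-\cdots$ and $G_2=E_2(\tau)-4E_2(4\tau)=-3-24q-\cdots$, matching the $q^0$- and $q^1$-coefficients of $\theta^4=1+8q+\cdots$ gives $\alpha=0$, $\beta=-\tfrac13$, i.e.\ $\theta^4=-\tfrac13\bigl(E_2(\tau)-4E_2(4\tau)\bigr)$, whose $q^n$-coefficient is exactly $8\sigma(n)-32\sigma(n/4)$; since the $2\times 2$ coefficient matrix is invertible and $\{G_1,G_2\}$ spans the space, this determines $\theta^4$ completely, so the identity holds for all $n$. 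The conversion to $8\sum_{d\mid n,\,4\nmid d}d$ and the direct count of $24$ representations of $n=2$ are also correct. Your alternative Lambert-series identity $\theta^4=1+8\sum_{n\ge 1}nq^n/(1+(-q)^n)$ is likewise the classical elementary path (Jacobi's own, via triple-product manipulations). The only caveat is that your write-up leans on unproved classical inputs (modularity of $\theta$, the dimension computation), but for a lemma the paper itself cites without proof this is entirely reasonable; if you wanted a self-contained argument, the Lambert-series route you sketch is the way to go.
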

	
	By simple computation, we have 
	\begin{lemma} \label{lemma:o17281} The group  $G=\Aut(E\times E)=\{ g\in M_2(\gO)\mid g^+g=I\}$ is nothing but the following group of order $32$: 
		\begin{align}
			\left\{ \begin{pmatrix} \pm 1,  \pm i & 0 \\ 0 & \pm 1,  \pm i \end{pmatrix},
			\quad 
			\begin{pmatrix} 0 & \pm 1,  \pm i \\ \pm 1,  \pm i & 0 \end{pmatrix} \right\}	. \label{matrix_M_unitsE1728}
		\end{align} 
	\end{lemma}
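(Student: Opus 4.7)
The plan is to reduce the verification to a finite enumeration using a unitarity condition and the explicit form of the reduced norm. First, the principal polarization $\{0\}\times E + E\times \{0\}$ on $E\times E$ corresponds under the map $j$ to the identity matrix $H=I\in M_2(\gO)$: by Proposition~\ref{corre_L_M} this is the unique matrix with diagonal entries $1$ and off-diagonal $0$. Thus an element of $\Aut([E\times E])$ is a self-isogeny of degree $1$ preserving the polarization, and by the matrix-isogeny dictionary (the proposition preceding Definition 2.8) its matrix $M\in M_2(\gO)$ satisfies $M^{+}IM = I$, i.e.\ $M^{+}M = I$.

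Writing $M = \begin{pmatrix} a & b \\ c & d \end{pmatrix}$ with $a,b,c,d\in\gO$ and expanding $M^{+}M = I$ yields
\[ \Nrd(a)+\Nrd(c) = 1, \qquad \Nrd(b)+\Nrd(d) = 1, \qquad \bar{a}b + \bar{c}d = 0. \]
Since $\Nrd$ is a non-negative integer-valued function on $\gO$ vanishing only at $0$, each of the first two equations forces exactly one entry in the corresponding column to vanish and the other to be a unit of $\gO$. The third equation then excludes the configurations in which both nonzero entries lie in the same column, leaving precisely the diagonal and antidiagonal shapes.

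The main (and only nontrivial) computation is to identify $\gO^{\times}$. Using the norm formula \eqref{eq:nrd1}, a unit with coordinates $(x,y,z,w)$ must satisfy $(x+z/2)^2+(y+w/2)^2+p(z^2+w^2)/4 = 1$; since $p\equiv 3\pmod 4$ and $p>3$ give $p\geq 7$, the third summand forces $z=w=0$, and the remaining equation $x^2+y^2=1$ with $x,y\in\Z$ yields the four units $\pm 1,\pm i$. Combining: there are $4$ choices for each of the two nonzero entries and $2$ choices for which diagonal they occupy, giving $4\cdot 4\cdot 2 = 32$ matrices, split evenly into diagonal and antidiagonal ones as in \eqref{matrix_M_unitsE1728}. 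I do not anticipate a substantive obstacle here; the rigidity comes entirely from the factor $p/4$ in the norm formula, which makes the classification of units immediate.
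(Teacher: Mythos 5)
Your proof is correct and is essentially the ``simple computation'' the paper leaves implicit: expanding $M^{+}M=I$, using positivity and integrality of $\Nrd$ to force one unit and one zero in each column, using the off-diagonal equation (and the absence of zero divisors in $\gO$) to exclude the two nonzero entries sharing a row --- note you wrote ``column'' where you mean ``row'', a harmless slip --- and using the norm formula \eqref{eq:nrd1} with $p\geq 7$ to get $\gO^{\times}=\{\pm 1,\pm i\}$, which yields exactly the $16+16=32$ diagonal and antidiagonal matrices. No gap.
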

	
	We shall need the following notations in this section:
	\begin{enumerate}[i] 
		\item 	If  $\ell\equiv 1\bmod{4}$, let
		\begin{enumerate}
			\item $\lambda:=x_{\ell}+ y_{\ell} i\in \Z[i]$ such that $x^2_{\ell}+ y^2_{\ell}=\ell$;  
			\item $L_1:= \ker(\lambda: E[\ell]\rw  E[\ell])$ and $L_2:= \ker(\bar{\lambda}: E[\ell]\rw  E[\ell])$;
			\item  $R\in L_1\backslash\{0\}$ and $R'\in L_2\backslash\{0\}$;
			\item   $t:=-x_{\ell}/y_{\ell}\in \F_\ell$. Hence $t^2+1=0$.
		\end{enumerate} 
		
		\item	Let $S=R+R'\in E[\ell]$ if $\ell\equiv 1\bmod{4}$ and $S$ be  any fixed  nonzero $P\in E[\ell]$ if $\ell\equiv 3\bmod{4}$. Let $S^*=i(S)$.
		\item Let $G=\Aut(E\times E)$. 
		\item For an isogeny $\va$ starting from $E\times E$, let $G_\va=\{g\in G: \va g=\va\}$ be the stabilizer of $\va$ by the $G$-action, and $O_\va=\{\va g: g\in G\}$ be the $G$-orbit of $\va$.
	\end{enumerate}
	
	Note that $L_1$ and $L_2$ are the only $1$-dimensional invariant $\F_\ell$-subspaces of the operator $i$ on $E[\ell] $, with eigenvalues $t$ and $-t$ respectively.

	\subsection{Kernels of $(\ell,\ell)$-isogenies from  $E_{1728}\times E_{1728}$}

	\begin{lemma} \label{lemma:basis}  The set $\{S, S^*\}$ is an $\F_{\ell}$-basis of   $E[\ell]$. 
	\end{lemma}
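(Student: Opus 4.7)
The plan is to split into the two cases $\ell\equiv 1\pmod 4$ and $\ell\equiv 3\pmod 4$ according to the definition of $S$, and in each case verify that $S$ and $S^*=i(S)$ are $\F_\ell$-linearly independent in $E[\ell]\cong \F_\ell^2$.

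For $\ell\equiv 3\pmod 4$, I would argue by contradiction. Suppose $S$ and $S^*=i(S)$ are linearly dependent. Since $S\neq 0$, there must exist some $c\in\F_\ell$ with $i(S)=cS$. Applying $i$ again yields $i^2(S)=c^2 S$, but $i^2=-1$, so $c^2\equiv -1\pmod\ell$. This is impossible because $-1$ is not a quadratic residue modulo $\ell$ when $\ell\equiv 3\pmod 4$. Hence $\{S,S^*\}$ is linearly independent and therefore a basis.

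For $\ell\equiv 1\pmod 4$, I would use the eigenspace decomposition already set up in the notations. Since $\lambda\bar\lambda=\ell$, the operator $i$ on $E[\ell]$ has minimal polynomial dividing $X^2+1$, which splits over $\F_\ell$ as $(X-t)(X+t)$ with $t^2=-1$. By inspection of the definitions of $L_1,L_2$, one sees that $L_1$ is the $t$-eigenspace and $L_2$ is the $(-t)$-eigenspace of $i$, giving $E[\ell]=L_1\oplus L_2$ with $\{R,R'\}$ as an $\F_\ell$-basis. Then
\[ S=R+R',\qquad S^*=i(R)+i(R')=tR-tR', \]
so the matrix expressing $(S,S^*)$ in the basis $(R,R')$ is $\begin{pmatrix}1&t\\1&-t\end{pmatrix}$ of determinant $-2t\neq 0$ (as $2,t\in\F_\ell^\times$). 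Thus $\{S,S^*\}$ is again a basis.

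The only mildly subtle point is verifying that $L_1$ and $L_2$ really are the two eigenspaces of $i$ with the claimed eigenvalues $\pm t$; this follows from the relation $\lambda=x_\ell+y_\ell i$ together with $t=-x_\ell/y_\ell$, since $\lambda(P)=0$ forces $i(P)=-(x_\ell/y_\ell)P=tP$ on $L_1$, and similarly $\bar\lambda=x_\ell-y_\ell i$ gives eigenvalue $-t$ on $L_2$. This is short enough to be absorbed into the proof rather than treated as a separate lemma.
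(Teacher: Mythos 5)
Your proposal is correct and follows essentially the same route as the paper: the $\ell\equiv 3\pmod 4$ case is the identical contradiction argument with $c^2=-1$, and the $\ell\equiv 1\pmod 4$ case just writes out explicitly the linear-independence computation (via the eigenbasis $\{R,R'\}$ and the determinant $-2t\neq 0$) that the paper leaves as "one can show easily," using the eigenvalue facts about $L_1,L_2$ already recorded in the paper's notation. No gaps; your spelled-out verification that $i$ acts on $L_1,L_2$ by $\pm t$ is a harmless (and correct) elaboration.
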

	\begin{proof} If $\ell\equiv 3\mod{4}$, we claim that $0\neq P$ and $i(P)$ are linearly independent. Indeed,  if $i(P)= cP$ for some $c\in \F_\ell$, then  $1+ c^2=0$, which is impossible.  
		
		If $\ell\equiv 1\bmod{4}$,  then  $S=R+R'$ and $S^*=tR-tR'$. One can show easily that they are linearly independent. 
	\end{proof}

	\begin{prop}\label{nondiag} \label{prop:diag1} There is a one-to-one correspondence  of the set of non-diagonal $(\ell, \ell)$-isogenies  from $ E\times  E$ and the set of generator matrices 
		\[ \left\{  \begin{pmatrix}
			S & S^*\\ aS+ bS^* & cS+dS^*
		\end{pmatrix}: a, b, c, d\in \F_\ell, ad-bc=-1\right\}. \]
	\end{prop}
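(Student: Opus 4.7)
The plan is to parametrize the kernel of a non-diagonal $(\ell,\ell)$-isogeny as the graph of a linear isomorphism $E[\ell]\to E[\ell]$, write this isomorphism in the basis $\{S,S^*\}$, and then show that the maximal isotropy condition on the kernel is equivalent to the determinant condition $ad-bc=-1$.

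First I would use Lemma~\ref{lemma:diagonal}: if $\va$ is non-diagonal, then the projection $\pi_1:\ker(\va)\rw E[\ell]$ onto the first factor cannot have $1$-dimensional image, so it must be surjective. Since both $\ker(\va)$ and $E[\ell]$ are $2$-dimensional over $\F_\ell$, this $\pi_1$ is an isomorphism. Hence $\ker(\va)$ is the graph of a unique $\F_\ell$-linear isomorphism $f:E[\ell]\rw E[\ell]$. By Lemma~\ref{lemma:basis}, $\{S,S^*\}$ is a basis of $E[\ell]$, so writing $f(S)=aS+bS^*$ and $f(S^*)=cS+dS^*$ produces a generator matrix of $\ker(\va)$ of the claimed shape. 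Uniqueness of $(a,b,c,d)$ is immediate: any matrix of this form whose first column is $(S,\cdot)$ and whose second column is $(S^*,\cdot)$ is forced, inside its own column span, to have its lower entries determined by the graph of $f$.

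Next I would translate the condition that $\ker(\va)$ is maximal $\ell$-isotropic into a condition on $(a,b,c,d)$. Since $\ker(\va)$ is already $2$-dimensional inside the $4$-dimensional symplectic space $(E\times E)[\ell]$, being maximal isotropic is equivalent to having the two generators pair trivially. Using bilinearity and alternation of the Weil pairing together with the product structure of the pairing on $E\times E$ induced by the product principal polarization,
\begin{align*}
 e_\ell\bigl((S,\,aS+bS^*),\,(S^*,\,cS+dS^*)\bigr)
 &= e_\ell(S,S^*)\cdot e_\ell(aS+bS^*,\,cS+dS^*)\\
 &= e_\ell(S,S^*)\cdot e_\ell(S,S^*)^{ad-bc}\\
 &= e_\ell(S,S^*)^{1+ad-bc}.
\end{align*}
Because $\{S,S^*\}$ is a basis of $E[\ell]$ and $e_\ell$ is non-degenerate, $e_\ell(S,S^*)$ is a primitive $\ell$-th root of unity, so the condition is exactly $ad-bc\equiv -1\pmod\ell$.

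Finally I would check the reverse direction: given any $(a,b,c,d)\in\F_\ell^4$ with $ad-bc=-1$, the two columns of the displayed matrix generate a $2$-dimensional subgroup of $(E\times E)[\ell]$ (columns are $\F_\ell$-independent because their first coordinates $S$ and $S^*$ are) that is isotropic by the computation above, hence maximal isotropic, and therefore serves as $\ker(\va)$ for a unique $(\ell,\ell)$-isogeny from $E\times E$ by Theorem~\ref{theorem:3. 13} and Proposition~\ref{prop:lniso}. Non-diagonality is automatic from Lemma~\ref{lemma:diagonal}, since the projection to $E[\ell]$ sends the two generators to the linearly independent pair $S,S^*$, so $\dim K_1=2$. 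I do not expect a real obstacle here; the only point that requires care is the correct handling of signs in the Weil pairing identity and verifying that the pairing on the product $\psas$ factors as the product of the pairings on the two copies of $E$ — both are standard and combinatorial.
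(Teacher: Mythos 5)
Your proposal is correct, and its computational heart coincides with the paper's: put the kernel in the form with first row $(S,S^*)$ and use the product Weil pairing on $E\times E$ to turn maximal isotropy into $ad-bc\equiv -1\pmod{\ell}$. Where you genuinely differ is in how the bijection is closed. The paper proves only the forward direction (every non-diagonal isogeny admits a generator matrix of the stated shape, obtained by a change of basis of $\ker(\va)$) and then concludes by a cardinality count: both sets have $\ell(\ell^2-1)=\ell^3-\ell$ elements, using Corollary~\ref{cor:isogenies} for the isogenies. You avoid the count entirely: you identify $\ker(\va)$ with the graph of a linear map $E[\ell]\to E[\ell]$ written in the basis $\{S,S^*\}$ (which also makes uniqueness of $(a,b,c,d)$ transparent), and you prove the converse explicitly, checking that each quadruple with $ad-bc=-1$ spans a $2$-dimensional isotropic, hence maximal isotropic, subgroup, which by Theorem~\ref{theorem:3. 13} and Proposition~\ref{prop:lniso} is the kernel of an $(\ell,\ell)$-isogeny, non-diagonal by Lemma~\ref{lemma:diagonal}. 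Your route is slightly longer but self-contained and exhibits the inverse map; the paper's is shorter at the price of the counting step. One small point to tighten: when you say the first projection ``cannot have $1$-dimensional image, so it must be surjective,'' you should also exclude image zero --- if $\ker(\va)\subseteq\{0\}\times E[\ell]$ then $\ker(\va)=\{0\}\times E[\ell]$, which is not isotropic (equivalently, it would contain a point $(0,Q)$ with $Q\neq 0$, forcing $\va$ to be diagonal by Lemma~\ref{lemma:diagonal}(v)). This is a one-line fix, not a gap.
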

	\begin{proof} Since both sets are of order $\ell(\ell^2-1)=\ell^3-\ell$, it suffices to show a non-diagonal isogeny has a generator matrix of the above form.

		Let $\va$ be a non-diagonal $(\ell,\ell)$-isogeny. Suppose $\{(P, Q), (P', Q')\}$ is a basis of $\ker(\va)$ over $\F_\ell$.  
		Then $\{P, P'\}$, $\{Q, Q'\}$ and $\{S, S^*\}$ are all bases of $E[\ell]$ by Lemma~\ref{lemma:diagonal} and Lemma~\ref{lemma:basis}. Suppose $S=a_1 P+ b_1 P'$ and $i(S)=c_1 P + d_1 P'$. 
		Write $a_1 Q+ b_1 Q'=  aS+ b S^*$,  $c_1 Q+d_1 Q'=  cS+ d S^*$.  
		Then $\{(S,aS+ b S^*),  (S^*, cS+ d S^*)\}$ is a new basis of $\ker(\va)$. To ensure $\ker(\va)$ is a maximal isotropic subgroup, we have $e_{\ell}(S,  S^*)e_{\ell}(aS+ b S^*, cS+ d S^*)=1$. This implies $ad- bc+ 1 \equiv 0 \pmod{\ell}$.
	\end{proof}
	\begin{defn} For a  non-diagonal $(\ell, \ell)$-isogeny $\va$  from $ E\times  E$, we call $\{a,b,c,d\}$ given above the quadruple associated to $\va$. 
	\end{defn}
	
	We now describe the action of $G=\Aut(E\times E)$ on the isogenies  explicitly. For the diagonal isogenies, by easy computation, we have
	
	\begin{lemma} Suppose $\ker(\va)=K_1\times K_2$. Then $g\ker(\va)=\ker (\va g^{-1})$ for  each $g\in G$ is given in the following table:
		
		\vskip 0.3cm
		\begin{center}
			\begin{tabular}{|c|c|c|c|}
				\hline
				$g$	& $g\ker(\va)$ & $g$ & $g\ker(\va)$ \\
				\hline
				$\begin{psmallmatrix}\pm 1 & 0\\ 0 & \pm 1 \end{psmallmatrix}$	& $K_1\times K_2$ & $\begin{psmallmatrix}\pm 1 & 0\\ 0 & \pm i \end{psmallmatrix}$ &  $K_1\times i(K_2)$\\
				\hline
				$\begin{psmallmatrix}\pm i & 0\\ 0 & \pm 1 \end{psmallmatrix}$	& $i(K_1)\times K_2$ & $\begin{psmallmatrix}\pm i & 0\\ 0 & \pm i \end{psmallmatrix}$ & $i(K_1)\times i(K_2)$ \\
				\hline
				$\begin{psmallmatrix}0 & \pm 1\\ \pm 1 & 0 \end{psmallmatrix}$	& $K_2\times K_1$ & $\begin{psmallmatrix}0 & \pm 1\\ \pm i & 0 \end{psmallmatrix}$ & $K_2\times i(K_1)$ \\
				\hline
				$\begin{psmallmatrix}0 & \pm i\\ \pm 1 & 0 \end{psmallmatrix}$	& $i(K_2)\times K_1$ & $\begin{psmallmatrix}0 & \pm i\\ \pm i & 0 \end{psmallmatrix}$ & $i(K_1)\times i(K_2)$ \\
				\hline
			\end{tabular}
		\end{center}
	\end{lemma}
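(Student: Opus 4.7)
The proof will be a direct computation. First, I would verify the general identity $g\ker(\va) = \ker(\va \circ g^{-1})$: for any $R \in E \times E$, we have $R \in g\ker(\va)$ iff $g^{-1}(R) \in \ker(\va)$ iff $(\va \circ g^{-1})(R) = 0$. This justifies the identification in the statement and reduces the problem to computing $g(K_1 \times K_2)$ for each $g \in G$.

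Next, I would invoke the explicit list of $G$ in Lemma \ref{lemma:o17281}. A matrix $g = \begin{psmallmatrix} a & b \\ c & d \end{psmallmatrix} \in G$ acts on $(P, Q) \in K_1 \times K_2$ as $(aP + bQ, cP + dQ)$. When $g$ is diagonal ($b = c = 0$), this becomes $(aP, dQ) \in aK_1 \times dK_2$. When $g$ is anti-diagonal ($a = d = 0$), it becomes $(bQ, cP) \in bK_2 \times cK_1$, so the two factors swap. In both cases the image is immediately visible as a product of subgroups.

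Finally, since $K_1, K_2$ are $\F_\ell$-subgroups of $E[\ell]$, they satisfy $-K_i = K_i$, and hence multiplication by $\pm 1$ preserves $K_i$, while multiplication by $\pm i$ yields the single subgroup $i(K_i)$ regardless of sign. Grouping the $32$ elements of $G$ into the $8$ types obtained by recording whether each nonzero entry lies in $\{\pm 1\}$ or $\{\pm i\}$ therefore produces exactly the eight rows of the table, each entry of which follows by substitution into the formulas above. No step presents a real obstacle; the work is purely bookkeeping, and the only care required is keeping track in the anti-diagonal cases of which original factor ($K_1$ or $K_2$) ends up in which slot of the output.
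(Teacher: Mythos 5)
Your proposal is correct and takes essentially the same approach as the paper, which offers no argument beyond ``by easy computation'': the identity $g\ker(\va)=\ker(\va g^{-1})$ plus the entrywise action of diagonal and anti-diagonal unitary matrices is exactly that computation. (One remark: your formulas give $i(K_2)\times i(K_1)$ for the last row, so the paper's entry $i(K_1)\times i(K_2)$ there is a harmless transposition slip rather than a gap in your argument.)
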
	
	Consequently, we have
	\begin{prop} \label{diagker}  Diagonal isogenies  can be divided into the following classes, which are unions of $G$-orbits: 
		\begin{enumerate}
			\item[$(\Da)$]  $(K_1, K_2)=(L_1, L_1)$ or $(L_2, L_2)$, where $G_\va= G$ and $O_\va=\{\va\}$.

			\item[$(\Db)$]  $(K_1, K_2)=(L_1, L_2)$ or $(L_2, L_1)$, which form $1$ orbit if  $\ell\equiv 1\bmod{4}$. 
			
			\item[$(\Dc)$] Exactly one of $K_1,K_2$ is in $\{L_1, L_2\}$, where $|G_\va|=8$ and $|O_\va|=4$.  This class contains $4(\ell-1)$ isogenies if $\ell\equiv 1\bmod{4}$.   
			
			\item[$(\Dd)$] $K_1\notin \{L_1, L_2\}$ and $K_2\in \{K_1, i(K_1)\}$, where  $|G_\va|=8$ and $|O_\va|=4$. This class contains  $2(\ell-1)$ isogenies  if $\ell\equiv 1\bmod{4}$, and  $2(\ell+1)$ isogenies if $\ell\equiv 3\bmod{4}$.
			
			\item[$(\De)$] $K_1\notin \{L_1, L_2\}$ and $K_2\notin \{L_1, L_2, K_1, i(K_1)\}$, where  $G_\va= \{\begin{psmallmatrix}	\pm 1 & 0\\ 0 &\pm 1			\end{psmallmatrix}\}$ and $|O_\va|=8$. This class contains  $(\ell-1)(\ell-3)$ isogenies  if $\ell\equiv 1\bmod{4}$, and  $\ell^2-1$ isogenies if $\ell\equiv 3\bmod{4}$.			
		\end{enumerate}
	\end{prop}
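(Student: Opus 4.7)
The plan is to apply the orbit-stabilizer theorem class by class, reading off the stabilizer of $\ker(\varphi)=K_1\times K_2$ directly from the table in the preceding lemma. For each of $\Da$--$\De$ I pick a representative kernel, enumerate the rows of the table whose image equals $K_1\times K_2$, obtain $|G_\varphi|$ by counting matrices in those rows, deduce $|O_\varphi|=32/|G_\varphi|$, and then count ordered pairs $(K_1,K_2)$ satisfying the defining condition of the class.

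Two preliminary observations are needed. First, each defining condition is symmetric in $K_1,K_2$ and invariant under replacing $K_i$ by $i(K_i)$ (using that $i$ induces an involution on the set of $1$-dimensional subspaces of $E[\ell]$, with $L_1,L_2$ being the only fixed points when $\ell\equiv 1\bmod 4$ and no fixed points when $\ell\equiv 3\bmod 4$); hence each class is genuinely a union of $G$-orbits. Second, when $\ell\equiv 3\bmod 4$ the subspaces $L_1,L_2$ do not exist, so $\Da$, $\Db$, $\Dc$ are automatically empty in that regime.

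Classes $\Da$ and $\Db$ are immediate: with $K_1=K_2=L_j$ every row of the table returns $L_j\times L_j$, so $|G_\varphi|=32$ and each such kernel is a singleton orbit, giving two total isogenies; with $(K_1,K_2)=(L_1,L_2)$ all sixteen diagonal rows fix the kernel while the sixteen anti-diagonal ones swap to $(L_2,L_1)$, producing a single orbit of size two. For $\Dc$ with $K_1\in\{L_1,L_2\}$ and $K_2\notin\{L_1,L_2\}$, exactly the eight diagonal matrices with arbitrary first-slot entry and $\pm 1$ in the second slot stabilize, giving orbits of size four and the total count $2\cdot 2\cdot(\ell-1)=4(\ell-1)$. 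For $\De$ the exclusions $K_2\notin\{L_1,L_2,K_1,i(K_1)\}$ together with $K_1\notin\{L_1,L_2\}$ ensure that every non-identity row of the table produces a subspace distinct from $K_1\times K_2$, so only $\begin{psmallmatrix}\pm1&0\\0&\pm1\end{psmallmatrix}$ stabilizes; the counts $(\ell-1)(\ell-3)$ and $\ell^2-1$ follow from direct enumeration of ordered pairs.

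The main obstacle is class $\Dd$, where the condition $K_2\in\{K_1,i(K_1)\}$ splits into the sub-cases $K_2=K_1$ and $K_2=i(K_1)\ne K_1$, which must be handled separately. I will verify that the stabilizer has order $8$ in each sub-case, but the stabilizing anti-diagonal matrices differ: the $\begin{psmallmatrix}0&\pm1\\\pm1&0\end{psmallmatrix}$ family in the first sub-case and the $\begin{psmallmatrix}0&\pm i\\\pm i&0\end{psmallmatrix}$ family in the second (using that $i^2=-1$ acts trivially on $1$-dimensional subspaces). Combining both sub-cases gives $2(\ell-1)$ pairs when $\ell\equiv 1\bmod 4$ and $2(\ell+1)$ pairs when $\ell\equiv 3\bmod 4$. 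As a final consistency check the five cardinalities should sum to $(\ell+1)^2$, matching Corollary \ref{cor:isogenies}.
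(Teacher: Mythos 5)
Your proposal is correct and takes essentially the same route as the paper, which likewise obtains the proposition by reading stabilizers and orbits off the preceding lemma's table (orbit–stabilizer with $|G|=32$) and then counting pairs $(K_1,K_2)$ in each class, with the same consistency check against the total $(\ell+1)^2$. Your sub-case analysis of class $\Dd$ — that the $\begin{psmallmatrix}0&\pm i\\ \pm i&0\end{psmallmatrix}$ family stabilizes $K_1\times i(K_1)$ because $i^2=-1$ acts trivially on lines — is exactly the intended reading (note the table's last entry should read $i(K_2)\times i(K_1)$ rather than $i(K_1)\times i(K_2)$, a typo your computation implicitly corrects).
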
	
	
	For the non-diagonal isogenies, we have
	
	\begin{lemma} Suppose $\va$ is non-diagonal associated to the quadruple  $\{a,b,c,d\}$. Then for each $g\in G$, the associated quadruple of $\pm \va g^{-1}$  is given in the following table:
		
		\vskip 0.3cm 		
		\begin{center}		\begin{tabular}{|c|c|c|c|c|c|}
				\hline
				$g$ & quadruple & $g$ &  quadruple & $g$ &  quadruple \\
				\hline
				$\begin{psmallmatrix}1 & 0\\ 0 & -1 \end{psmallmatrix}$	& $\{-a,-b,-c,-d\}$ & $\begin{psmallmatrix}i & 0\\ 0 & i \end{psmallmatrix}$ & $\{d,-c,-b,a\}$ & $\begin{psmallmatrix}i & 0\\ 0 & -i \end{psmallmatrix}$ & $\{-d,c,b,-a\}$ \\
				\hline
				$\begin{psmallmatrix}0 & 1\\ -1 & 0\end{psmallmatrix}$ &$\{d,-b,-c,a\}$ &			
				$\begin{psmallmatrix}0 & 1\\ 1 & 0\end{psmallmatrix}$ & $\{-d,b,c,-a\}$	 & 	
				$\begin{psmallmatrix}0 & i\\ i & 0\end{psmallmatrix}$ 	& $\{-a,-c,-b,-d\}$   \\
				\hline
				$\begin{psmallmatrix}0 & i\\ -i & 0 \end{psmallmatrix}$	&  $\{a,c,b,d\}$ & $\begin{psmallmatrix}1 & 0\\ 0 & i \end{psmallmatrix}$ & $\{-b,a,-d,c\}$ & $\begin{psmallmatrix}1 & 0\\ 0 & -i \end{psmallmatrix}$ & $\{b,-a,d,-c\}$ \\
				\hline
				$\begin{psmallmatrix}i & 0\\ 0 & 1 \end{psmallmatrix}$	& $\{-c,-d,a,b\}$ & $\begin{psmallmatrix}i & 0\\ 0 & -1 \end{psmallmatrix}$ & $\{c,d,-a,-b\}$ &  $\begin{psmallmatrix}0 & i\\ 1 & 0 \end{psmallmatrix}$	& $\{-c,a,-d,b\}$   \\
				\hline
				$\begin{psmallmatrix}0 & i\\ -1 & 0 \end{psmallmatrix}$	& $\{c,-a,d,-b\}$ & $\begin{psmallmatrix}0 & 1\\ i & 0 \end{psmallmatrix}$ & $\{-b,-d,a,c\}$ & $\begin{psmallmatrix}0 & 1\\ -i & 0 \end{psmallmatrix}$ & $\{b,d,-a,-c\}$ \\
				\hline
		\end{tabular}	\end{center}
	\end{lemma}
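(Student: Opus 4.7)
The plan is to verify the table entry by entry, building directly on the preceding lemma. For each $g\in G$, the relation $\ker(\va g^{-1})=g(\ker(\va))$ reduces the problem to applying $g$ to the generator matrix $\begin{psmallmatrix}S & S^*\\ aS+bS^* & cS+dS^*\end{psmallmatrix}$ of $\ker(\va)$ supplied by Proposition \ref{nondiag}, and then renormalizing the resulting $\F_\ell$-basis so that its first row is again $(S,S^*)$. The coefficients appearing in the second row after this renormalization are precisely the new quadruple.

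The arithmetic inputs are the two identities $i(S)=S^*$ and $i(S^*)=-S$ (the second coming from $i^2=-1$), together with the fact that antidiagonal elements of $G$ swap the two $E$-components of any basis vector. For diagonal $g=\begin{psmallmatrix}u & 0\\ 0 & v\end{psmallmatrix}$ with $u,v\in\{\pm 1,\pm i\}$, the action on the first row is transparent and is undone by rescaling the two columns of the basis; the second row transforms accordingly. For antidiagonal $g$, the two rows are effectively interchanged, so restoring $(S,S^*)$ in the first row amounts to inverting $\begin{psmallmatrix}a & c\\ b & d\end{psmallmatrix}$ in $\GL_2(\F_\ell)$, which is legitimate because $ad-bc=-1\neq 0$, and the new second row emerges from this inversion. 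As illustrative templates, for $g=\begin{psmallmatrix}i & 0\\ 0 & i\end{psmallmatrix}$ one computes $g(S,aS+bS^*)=(S^*,-bS+aS^*)$ and $g(S^*,cS+dS^*)=(-S,-dS+cS^*)$, which after a row swap and a sign adjustment yields the quadruple $\{d,-c,-b,a\}$; and for $g=\begin{psmallmatrix}0 & 1\\ 1 & 0\end{psmallmatrix}$, the renormalization via $\begin{psmallmatrix}a & c\\ b & d\end{psmallmatrix}^{-1}=\begin{psmallmatrix}-d & c\\ b & -a\end{psmallmatrix}$ produces $\{-d,b,c,-a\}$.

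The remaining fourteen entries follow by the same template, and since $-I\in G$ acts trivially on subgroups only sixteen of the thirty-two elements of $G$ need to be treated, matching the size of the table. The main obstacle is not conceptual but organizational: one must keep track of signs, column swaps, and the $\F_\ell$-linear algebra consistently across all sixteen cases. At each step the determinant identity $a'd'-b'c'=-1$ on the new quadruple provides a built-in cross-check, since $g(\ker(\va))$ is again a maximal $\ell$-isotropic subgroup and therefore arises from Proposition \ref{nondiag} with \emph{some} admissible quadruple; any tentative table entry violating this determinant condition must contain a sign error, which allows the casework to be checked mechanically.
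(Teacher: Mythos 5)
Your proposal is correct and follows essentially the same route as the paper: apply $g$ to the generator matrix $\begin{psmallmatrix}S & S^*\\ aS+bS^* & cS+dS^*\end{psmallmatrix}$, then renormalize the resulting basis of $g\ker(\va)=\ker(\va g^{-1})$ by $\F_\ell$-column operations so the first row is again $(S,S^*)$, and read off the quadruple — exactly the computation the paper carries out for two representative elements before declaring the remaining cases similar. Your worked examples (including the $\{d,-c,-b,a\}$ and $\{-d,b,c,-a\}$ entries) and the determinant cross-check are consistent with the table, so the only caveat is the routine bookkeeping you already acknowledge.
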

	\begin{proof}  We first show the case for 	$g=\begin{psmallmatrix}i & 0\\ 0 & i \end{psmallmatrix}$. In this case,
		\[ \begin{pmatrix}i & 0\\ 0 & i \end{pmatrix} \begin{pmatrix}
			S & S^*\\ aS+bS^* & cS+dS^*
		\end{pmatrix}= \begin{pmatrix}
			S^* & -S\\ -bS+aS^* & -dS+cS^*
		\end{pmatrix},
		\]
		hence $\{(S, dS-cS^*), (S^*.-bS+aS^*)	\}$ is a basis for 	$\begin{psmallmatrix}i & 0\\ 0 & i \end{psmallmatrix}\va$, thus the associated quadruple is $\{d,-c,-b,a\}$.  
		
		We then show the case for 	$g=\begin{psmallmatrix}0 & i\\ i & 0 \end{psmallmatrix}$. In this case,
		\[ \begin{pmatrix}0 & i\\ i & 0 \end{pmatrix} \begin{pmatrix}
			S & S^*\\ aS+bS^* & cS+dS^*
		\end{pmatrix}= \begin{pmatrix}
			aS^*-bS & cS^*-dS\\ S^* & -S
		\end{pmatrix}.
		\]		
		Note that
		\[ a\begin{pmatrix} 	cS^*-dS\\ -S \end{pmatrix} -  c\begin{pmatrix} 	aS^*-bS\\ S^* \end{pmatrix}  = \begin{pmatrix} 	S\\ -aS-cS^* \end{pmatrix}, \] 
		\[ b\begin{pmatrix} 	cS^*-dS\\ -S \end{pmatrix} -  d\begin{pmatrix} 	aS^*-bS\\ S^* \end{pmatrix}  = \begin{pmatrix} 	S^*\\ -bS-dS^* \end{pmatrix}.  
		\]		
		hence $\{(S, -aS-cS^*), (S^*.-bS-dS^*)	\}$ is a basis for 	$\begin{psmallmatrix}0 & i \\ i & 0 \end{psmallmatrix}\va$, thus the associated quadruple is $\{-a,-c,-b,-d\}$. 		
		
		All other cases are similar.
	\end{proof}
	
	For every $g$, by looking for all $\va$ fixed by $g$ in the above table, we get
	
	\begin{prop}	\label{nondiagker} Non-diagonal isogenies can be divided into the following classes, which are unions of $G$-orbits:
		\begin{enumerate}
			\item[$(\Na)$] $\{a,b,c,d\}=\{a,b,-b,a\}$:
			\begin{enumerate}[i]
				\item $b=\pm a$, where $|G_\va|=8$ and $|O_\va|=4$,
				\item $a=0$ or $b=0$, where $|G_\va|=8$ and $|O_\va|=4$,
				\item other cases of this form, where $G_\va=\{\pm I_2, \pm i I_2\}$ and $|O_\va|=8$.
			\end{enumerate}	
			This class contains $\ell-1$ isogenies if $\ell\equiv 1\bmod{4}$ and  $\ell+1$ isogenies if $\ell\equiv 3\bmod{4}$.
			
			\item[$(\Nb)$] $\{a,b,c,d\}=\{a,b,b,-a\}$, where $|G_\va|=8$  and $|O_\va|=4$. This class contains $\ell-1$ isogenies if $\ell\equiv 1\bmod{4}$ and  $\ell+1$ isogenies if $\ell\equiv 3\bmod{4}$.
			
			\item[$(\Nc)$] $\{a,b,c,d\}=\{a,b,c,-a\}$, $c\neq b$ and $c\neq -b$ if $a=0$, or  $\{a,b,c,d\}=\{a,b,b,d\}$, $d\neq -a$ and $d\neq a$ if $b=0$, where $|G_\va|=4$  and $|O_\va|=8$. This class contains $2(\ell^2-1)$ isogenies.

			\item[$(\Nd)$] all other cases, where $G_\va=\{\pm I_2\} $  and $|O_\va|=16$. This class contains $(\ell-1)(\ell^2-\ell-4)$ isogenies if $\ell\equiv 1\bmod{4}$ and  $\ell(\ell+1)(\ell-3)$ isogenies if $\ell\equiv 3\bmod{4}$.
		\end{enumerate}	
	\end{prop}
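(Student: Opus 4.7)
The plan is a case analysis on the $G$-action on the set of quadruples $(a,b,c,d) \in \F_\ell^4$ with $ad - bc = -1$, driven by the table of the preceding lemma. As a first observation, $\{\pm I_2\} \subseteq G_\va$ for every non-diagonal $\va$, since scaling the columns of a generator matrix by $-1$ preserves its kernel; consequently every orbit length divides $16$. Moreover, the $\pm$ in the preceding lemma's statement means that $g$ and $-g$ produce the same quadruple, so it suffices to analyze the sixteen representatives listed in the table.

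For each representative $g$, I would read off the resulting linear constraint on $(a,b,c,d)$ and combine it with $ad - bc = -1$ to determine the fixed locus. A majority of the entries --- the ``mixed-unit'' elements such as $\begin{psmallmatrix}1&0\\0&\pm i\end{psmallmatrix}$ and $\begin{psmallmatrix}\pm i&0\\0&\pm 1\end{psmallmatrix}$, as well as $\begin{psmallmatrix}1&0\\0&-1\end{psmallmatrix}$ --- force $a=b=c=d=0$ and hence fix no non-diagonal $\va$. The surviving stabilizer conditions sort into two ``strong'' ones, $(c,d) = (-b, a)$ and $(c,d) = (b, -a)$ from $\begin{psmallmatrix}i&0\\0&\pm i\end{psmallmatrix}$, two ``weak'' ones, $d = -a$ and $c = b$ from $\begin{psmallmatrix}0&1\\1&0\end{psmallmatrix}$ and $\begin{psmallmatrix}0&i\\-i&0\end{psmallmatrix}$, and a handful of ``exceptional'' zero-dimensional loci such as $b = c = 0,\ a^2 = -1$ coming from $\begin{psmallmatrix}0&1\\-1&0\end{psmallmatrix}$-type elements, which are responsible for the enlarged stabilizers in the special subcases of $(\Na)$ and $(\Nb)$.

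I would then classify each quadruple by the intersection of fixed loci it belongs to. Class $(\Na)$ is the locus of the first strong condition, reducing the determinant equation to $a^2 + b^2 = -1$; class $(\Nb)$ is the locus of the second, i.e.\ $a^2 + b^2 = 1$; class $(\Nc)$ collects those satisfying exactly one of the two weak conditions but neither strong one; class $(\Nd)$ is the complement. The subcase analysis within $(\Na)$ --- separating $b = \pm a$, $a = 0$, $b = 0$, and ``other'' --- amounts to intersecting $a^2 + b^2 = -1$ with the exceptional loci to decide when the generic stabilizer $\{\pm I_2, \pm\begin{psmallmatrix}i&0\\0&i\end{psmallmatrix}\}$ of size four is enlarged to size eight. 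The exclusions built into the definition of $(\Nc)$ (``$c\neq b$ and $c\neq -b$ if $a=0$'', and its symmetric counterpart) are exactly what is needed to avoid double-counting with $(\Na)$ and $(\Nb)$.

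Finally, the counts come from elementary enumeration in $\F_\ell$. The conic $a^2+b^2 = \pm 1$ has $\ell-1$ points when $\ell \equiv 1 \pmod 4$ and $\ell+1$ points when $\ell \equiv 3\pmod 4$, which gives the cardinalities of $(\Na)$ and $(\Nb)$. For $(\Nc)$, summing over $a$ and solving $bc = 1-a^2$ (or its counterpart $ad = 1 + b^2$) gives $\ell(\ell+1)$ quadruples with $d = -a$; subtracting the overlap with $(\Na)$ and $(\Nb)$ leaves $\ell^2 - 1$ in either residue class, and by symmetry the $c = b$ branch contributes another $\ell^2 - 1$. The count for $(\Nd)$ is then forced by the total $\ell^3 - \ell$ and works out to the formulas claimed. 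The principal obstacle throughout is bookkeeping: verifying that each of the sixteen table entries places its fixed quadruples in the correct class, isolating the exceptional subcases of $(\Na)$ with care, and confirming that no accidental extra element of $G$ stabilizes a generic quadruple in $(\Nd)$.
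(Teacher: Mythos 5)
Correct, and essentially the paper's own approach: the stabilizer and orbit data are read off from the preceding lemma's table of quadruples, the sizes of Classes $\Na$ and $\Nb$ come from counting points on the conics $a^2+b^2=\pm1$, Class $\Nc$ is counted by enumerating the $d=-a$ and $c=b$ branches ($\ell(\ell+1)$ quadruples each) and removing the overlaps with $\Na$ and $\Nb$, and Class $\Nd$ is obtained by subtraction from the total $\ell^3-\ell$ of Corollary~\ref{cor:isogenies}. One cosmetic slip: for the $c=b$ branch the determinant condition $ad-bc=-1$ gives $ad=b^2-1$ (i.e.\ $b^2-ad=1$), not $ad=1+b^2$; since your count for that branch is obtained by the symmetry exchanging the two branches, the final numbers are unaffected.
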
 
	\begin{proof} We only need to count the number of isogenies in each class. 
		
		First for a fixed $c\in \F_{\ell}^\times$, the equation  $a^2+b^2=c$ has $\ell-1$  pairs of solutions in $\F_\ell$ if $\ell\equiv 1\bmod 4$
		and $\ell+1$ pairs if $\ell\equiv 3\bmod 4$. Take $c=\pm 1$, we get the orders of  Classes $\Na$ and $\Nb$. 
		
		For Class $\Nc$, we need to find the solutions of $a^2+bc=1 (b\neq c, b\neq -c\ \text{if}\ a=0)$ and $b^2-ad=1 (a\neq -d, a\neq d\ \text{if}\ b=0)$. Consider  the first one:
		\begin{enumerate}
			\item if $a=\pm 1$, then we can take either $b=0$ or $c=0$, there are $4\ell-2$ solutions of this type;
			\item if $a\neq \pm 1$, then $1-a^2\neq 0$, there are $\ell-1$ pairs of $b, c$ such that $bc=1-a^2$, so there are $(\ell-2)(\ell-1)$ solutions of this type;
			\item we need to exclude the $b=c$ case, which counts for $\ell-1$ solutions if $\ell\equiv 1\bmod{4}$ and  $\ell+1$ solutions if $\ell\equiv 3\bmod{4}$;
			\item we also need to exclude the $\{0,b,-b,0\}$ case, then $b^2=-1$, which has two solutions if $\ell\equiv 1\bmod{4}$ and  none  if $\ell\equiv 3\bmod{4}$;
			\item in conclusion, it has $\ell^2-1$ solutions. 
		\end{enumerate}
		Similarly the number of solutions for the second one is also $\ell^2-1$. Moreover, these two sets are disjoint, so Class $\Nc$ has $2(\ell^2-1)$ elements.
		
		Class $\Nd$ follows from results for Classes $\Na$-$\Nc$ and Corollary~\ref{cor:isogenies} that the number of non-diagonal isogenies is $\ell^3-\ell$.
	\end{proof}

	\subsection{Loops at $[E_{1728}\times E_{1728}]$}	
	\begin{thm}\label{thmE1728}
		Suppose $p>4\ell$. 
		
		$(1)$ If  $\ell \equiv 1 \bmod 4$, then  the set of loops of $E \times E$ is the union of Classes $\Da, \Db$ and $\Na$.
		
		$(2)$  If $\ell \equiv 3 \bmod 4$, then  the set of loops of $E \times E$ is Class $\Na$.					
		
		$(3)$ If $\ell=2$, then $E \times E$ has $3$ loops.		
	\end{thm}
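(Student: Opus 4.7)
The plan is to reformulate the loop condition as a matrix equation in $M_2(\gO)$ and then analyze the kernel structure class by class. By Theorem~\ref{thm:doos}, the target of any $(\ell,\ell)$-isogeny from $[E\times E]$ is isomorphic to $E\times E$ as an abelian variety, so after choosing such an identification and composing, a loop corresponds to a matrix $M\in M_2(\gO)$ satisfying $M^{+}M=\ell I$ (the condition for the standard product polarization to pull back to $\ell$ times itself). Two different identifications alter $M$ by left multiplication by $\Aut(E\times E)$, and by Lemma~\ref{lemma:o17281} this group has order $32$ and satisfies $g^{+}g=I$, so the relation $M^{+}M=\ell I$ and the kernel are both preserved. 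Hence the number of loops equals $|\{M\in M_2(\gO):M^{+}M=\ell I\}|/32$. The next step exploits $p>4\ell$ with the norm formula~\eqref{eq:nrd1}: each entry of $M$ has reduced norm at most $\ell<p/4$, forcing the $z$ and $w$ terms to vanish, so $M\in M_2(\Z[i])$ with $|\alpha|^2+|\gamma|^2=|\beta|^2+|\delta|^2=\ell$ and $\bar\alpha\beta+\bar\gamma\delta=0$, where $M=\begin{psmallmatrix}\alpha&\beta\\ \gamma&\delta\end{psmallmatrix}$.

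For diagonal isogenies the argument is short: the kernel $K_1\times K_2$ gives target $(E/K_1)\times(E/K_2)$ with the product polarization, which is isomorphic to $[E\times E]$ iff both $E/K_i\cong E$, iff each $K_i$ is the kernel of a degree-$\ell$ endomorphism of $E$. By the reduction above these endomorphisms come only from norm-$\ell$ elements of $\Z[i]$, which exist exactly when $\ell\equiv 1\bmod 4$ (or $\ell=2$, in which case $L_1=L_2$). So the diagonal loops are exactly Classes $\Da$ and $\Db$ for $\ell\equiv 1\bmod 4$, and Classes $\Dc$, $\Dd$, $\De$ contain no loops.

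For non-diagonal isogenies I translate the kernel generators $(S,aS+bS^{*})$ and $(S^{*},cS+dS^{*})$ into congruences via the action of $\Z[i]$ on $E[\ell]$ (recalling $i\cdot S=S^{*}$): one obtains $\alpha\equiv-(a+bi)\beta$ and $\gamma\equiv-(a+bi)\delta$ modulo $\ell$, and subtracting the analogous equations from the second generator yields $w\beta\equiv w\delta\equiv 0\pmod{\ell}$ where $w=(d-a)-(b+c)i$. I then split on $w$: if $w$ is a unit in $\Z[i]/\ell$, then $\beta,\delta\in\ell\Z[i]$, which combined with $|\beta|^2+|\delta|^2=\ell$ forces $\beta=\delta=0$ and then $\alpha=\gamma=0$, contradicting the norm equations; if $w$ is a proper zero divisor (possible only when $\ell\equiv 1\bmod 4$), then $\beta$ and $\delta$ must each be divisible by exactly one of $\lambda,\bar\lambda$, and a careful comparison of norms together with $\bar\alpha\beta+\bar\gamma\delta=0$ forces one column of $M$ to be zero, contradicting $\Nrd(M)=\ell$. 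Hence $w\equiv 0$, i.e., $d=a$ and $c=-b$, so the only non-diagonal candidates are in Class $\Na$.

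The final and most delicate step is to show that every Class $\Na$ kernel actually arises as $\ker M$ for some such $M$, which I accomplish by a global count. Enumerating first columns $(\alpha,\gamma)$ with $|\alpha|^2+|\gamma|^2=\ell$ via Lemma~\ref{dio1} (or directly via the Gaussian two-squares formula), and counting for each such pair the second columns $(\beta,\delta)$ with $\bar\alpha\beta+\bar\gamma\delta=0$ and $|\beta|^2+|\delta|^2=\ell$ (the orthogonal constraint reduces this to counting $s\in\Z[i]$ with $|s|^2=|\gcd(\alpha,\gamma)|^2$), the totals come out to $32(\ell+3)$ when $\ell\equiv 1\bmod 4$, $32(\ell+1)$ when $\ell\equiv 3\bmod 4$, and $96$ when $\ell=2$. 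Dividing by $32$ yields exactly $|\Da|+|\Db|+|\Na|=\ell+3$, $|\Na|=\ell+1$, and $3$ respectively, proving parts (1), (2), (3). The main obstacle is the zero-divisor sub-case for $\ell\equiv 1\bmod 4$, where both the splitting $\Z[i]/\ell\cong\F_\ell\times\F_\ell$ and the orthogonality relation must be exploited simultaneously to rule out solutions.
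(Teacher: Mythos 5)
Your proposal is correct, and while it starts from the same foundation as the paper (loops correspond to matrices $M\in M_2(\gO)$ with $M^{+}M=\ell I$, the bound $p>4\ell$ together with \eqref{eq:nrd1} pushes all entries into $\Z[i]$, and Jacobi's formula supplies the counts), the classification step is carried out by a genuinely different route. The paper uses both relations $M^{+}M=MM^{+}=\ell I$ and a gcd argument in $\Z[i]$ to bring $M$ into the normal form $\begin{psmallmatrix} a & bi\\ \bar{b}i & \bar{a}\end{psmallmatrix}$ up to the unitary group $G$, counts these normal forms and their stabilizers, and then solves directly for the kernel quadruple, finding it of the shape $\{u,v,-v,u\}$; you instead (i) dispose of diagonal loops by identifying the target of a diagonal isogeny as $(E/K_1)\times(E/K_2)$ with the product polarization, (ii) show non-diagonal loop kernels must lie in Class $\Na$ via the congruence $w\beta\equiv w\delta\equiv 0\pmod{\ell}$ with $w=(d-a)-(b+c)i$ (your unit/zero-divisor/zero trichotomy is sound; I checked the zero-divisor case does collapse to a vanishing row or column and contradicts invertibility), and (iii) obtain completeness by counting all solution matrices via column-orthogonality and $\gcd(\alpha,\gamma)$, getting $32(\ell+3)$, $32(\ell+1)$, $96$ in the three cases, then dividing by $|G|=32$. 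The totals and the resulting loop counts agree with the paper's. What your route buys is avoiding the normal-form and stabilizer computations; what the paper's route buys is an explicit matrix shape for non-diagonal loops (reused later in Proposition~\ref{llc}) and an explicit formula for $(u,v)$ in terms of the entries.

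Three points should be made explicit to make your sketch airtight, though none is a real gap. First, the division by $32$ requires that left multiplication by $G$ on the solution set is free and that two solution matrices with equal kernels lie in one $G$-orbit; both hold because $M$ is invertible over $\Q$, so $gM=M$ forces $g=I$, and $M'=uM$ with $u\in\GL_2(\gO)$ together with $M'^{+}M'=M^{+}M=\ell I$ forces $u^{+}u=I$, i.e.\ $u\in G$. Second, your diagonal step uses that $[(E/K_1)\times(E/K_2)]\cong[E\times E]$ as $\psas$ forces $E/K_i\cong E$; this is the uniqueness of the decomposition of a product-polarized surface into its elliptic factors, a standard fact (implicit already in the count of $\gE_p$ in the paper), but it should be cited since the paper's matrix argument does not need it. Third, the assertion ``contradicting $\Nrd(M)=\ell$'' should read that $\deg(M)=\ell^2$ (equivalently $\Nrd(M)=\ell^2$), a harmless slip.
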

	\begin{proof} Note that the principal polarized divisor $E\times \{0\}+ \{0\} \times E$  corresponds to the identity matrix $I\in M_2(\gO)$. To determine the loops at $E \times E $ in the isogeny graph $\gG_p$, it is equivalent to determine matrices $M\in M_2(\gO)$ such that $M^{+}M=\ell I$. Moreover, if $M$ corresponds to $\va$, then $\ker\va=\{(P,Q)\in E\times E: M(P,Q)^T=0\}$. Thus  $gM$, where $g\in G$ given in \eqref{matrix_M_unitsE1728},  determines the same loop as $M$ does.

		\vskip 0.3cm		
		\noindent (I) First assume $\ell$ is odd. Write  	
		$$ M= \begin{pmatrix} a & b \\ c & d \end{pmatrix},
		\quad (a, b, c, d \in \gO). $$
		The equation $M^{+}M=\ell I$ implies 
		\begin{align}
			\Nrd(a)+\Nrd(b)=\ell,\quad \Nrd(c)+\Nrd(d)=\ell. \label{1728 ab_ell}
		\end{align}
		Under the condition	$p>4\ell$, by \eqref{eq:nrd1}, similar to the proof of  \cite{OX}, we have
		\[ 		a, b, c, d \in {\Z}[i]. \]
		Replacing $b, c$ by $bi$, $ci$,  we may assume
		\begin{equation} \label{eq:M1728}
			M= \begin{pmatrix} a & bi \\ ci & d \end{pmatrix}
			\quad (a, b, c, d \in {\Z}[i]).\end{equation}
		Now we have 	 $M^{+}M=\ell I= M M^+$. Comparing the coefficients, we get 		
		\begin{align}
			a\bar{a}+ b\bar{b} = c\bar{c}+d\bar{d}= a\bar{a}+c\bar{c} =\ell,\quad   a\bar{c}=b\bar{d}. \label{ac=bd}
		\end{align}
		This implies
		\begin{equation}
			a\bar{a}=d\bar{d},\quad  b\bar{b} = c\bar{c},\ a\bar{c}=b\bar{d}. \label{nrda=nrdd}
		\end{equation}
		
		\noindent (i) If one of $a$,$b$ is $0$, up to an element $g\in G$, we may assume $b=0$, then $c=0$ and $a\bar{a}=d\bar{d}=\ell$, which can only happen if $\ell\equiv 1\bmod{4}$. In this case,  $M$ is one of the following matrices 
		\[  \begin{pmatrix} \lambda & 0 \\ 0 & \lambda \end{pmatrix},\ 
		\begin{pmatrix} \lambda & 0 \\ 0 & \bar{\lambda} \end{pmatrix},\ 
		\begin{pmatrix} \bar{\lambda} & 0 \\ 0 & \lambda \end{pmatrix},\ 
		\begin{pmatrix} \bar{\lambda} & 0 \\ 0 & \bar{\lambda} \end{pmatrix},
		\]
		which correspond to the $4$ isogenies in Classes $\Da$ and $\Db$.
		
		\noindent (ii) If $ab\neq 0$, then $c,d$ are also not $0$. Note that ${\Z}[i]$ is a principal ideal domain.  Let $A=\gcd(a,d)$ and $B=a/A$.
		Let $C=\gcd(b,c)$ and $D=b/C$.  
		Then \eqref{nrda=nrdd} implies that		
		\[		a=AB,\ d=A\bar{B},\ b=CD,\ c=C\bar{D}.	\]
		The equation $a\bar{c}=b\bar{d}$ implies that  $A\bar{C}=\bar{A}C$ and hence $A\bar{C} \in {\Z}$. 
		
		If $\pi$ is a Gauss prime above a rational prime $q$ such that the $\pi$-adic valuation of  $A$ is larger than the $\bar{\pi}$-adic valuation of $A$, then  $\pi\mid C$, hence $\pi$ is a common divisor of $a,b,c,d$ and then $q\mid a\bar{a}+b\bar{b}=\ell$, which means $q=\ell$ and one of $a, b$ must be $0$, which is a contradiction to the condition $ab\neq 0$. Hence $A=\varepsilon A'$ for some $A'\in \Z$ and $\varepsilon=\pm 1,\pm i$, so  $C=\varepsilon C'$ for some $C'\in \Z$.

		In this case, up to $g\in G$ we may assume  $a=\bar{d}$, $b=\bar{c}$, then  
		\begin{equation} \label{eq:loopmatrix} M=\begin{pmatrix} a & bi \\ \bar{b}i & \bar{a} \end{pmatrix}. \end{equation}
		Write $a=a_1+a_2i$, $b=b_1+b_2i$. The equation  $a\bar{a}+ b\bar{b}=\ell$ implies 
		\[ {a_1}^2+{a_2}^2+{b_1}^2+{b_2}^2=\ell, (a_1, a_2)\neq (0,0),\  (b_1, b_2)\neq (0,0). \] 
		By  Lemma \ref{dio1}, the  equation above has $8(\ell+1)$ solutions if $\ell\equiv 3\bmod{4}$ and  $8(\ell-1)$ solutions if $\ell\equiv 1\bmod{4}$. Now if $M$ is of the form \eqref{eq:loopmatrix}, then  $gM$ is also of this form  exactly  when
		\[ g\in \left\{\pm I_2,\ \pm \begin{pmatrix}
			i  & 0\\ 0 & -i
		\end{pmatrix},\ \pm \begin{pmatrix}
			0  & 1\\ -1 & 0
		\end{pmatrix},\ \pm \begin{pmatrix} 
			0  & i\\ i & 0
		\end{pmatrix} \right\}.\]
		Thus the number of  non-diagonal loops  is  $\ell+1$  if $\ell\equiv 3\bmod{4}$ and  $\ell-1$ if $\ell\equiv 1\bmod{4}$.
		
		Finally let us compute the kernels in this case. Suppose $M$ corresponds to the loop $\varphi$, then  $(P,Q)\in \ker\va$ means $M (P,Q)^T=0$. If $0\neq P\in E[\ell]$, then $aP\neq 0$, otherwise $a\bar{a} P=0$ and hence $P=0$ as $\ell\nmid a\bar{a}$. So $(P,0)\notin \ker\va$. Thus $\ker\va$ is non-diagonal. By Proposition~\ref{prop:diag1}, suppose $\ker\va$ is associated with $\{u,v,w,t\}$  with $ut-vw=-1$. 
		Then 
		\[ 
		\begin{pmatrix}
			a& bi\\ \bar{b}i & \bar{a} 
		\end{pmatrix} \begin{pmatrix}
			S & S^*\\
			uS+ v S^* & wS+ t S^*
		\end{pmatrix}= \begin{pmatrix}
			0 & 0 \\	0& 0
		\end{pmatrix}. \]
		Hence $(a+ ub i- vb)(S)= (ai+ wbi- tb)(S)= 0$. This implies $b(-u- vi- wi+ t)(S)=0$. Again by $\ell\nmid b\bar{b}$, we have  $(-u- vi- wi+ t)(S)=0$. Since $\{S, i(S)=S^*\}$ is a basis of $E[\ell]$, we have $t=u$ and  $w=-v$ in $\F_{\ell}$. In fact, 
		\[ (u,v)=\left(\frac{a_1 b_2 -a_2b_1}{b_1^2+b_2^2}, \frac{a_1 b_1+a_2 b_2}{b_1^2+b_2^2} \right). \]
		Thus the loops here are exactly those contained in Class $\Na$.
		
		\vskip 0.3cm
		\noindent (II) Assume  $\ell=2$.  By  calculation there are $3$ loops which correspond to the three matrices 
		\[\begin{pmatrix} 1 & 1 \\ -1 & 1 \end{pmatrix},
		\quad
		\begin{pmatrix} 1 & i \\ i & 1 \end{pmatrix},
		\quad  
		\begin{pmatrix} 1+i & 0 \\ 0 & 1+i \end{pmatrix}. \qedhere \]
	\end{proof}

	\subsection{Neighbors of $[E_{1728}\times E_{1728}] $} 
	
	If $\ell\equiv 1\bmod{4}$,	we denote the loops  $\lambda\times \lambda$ and $\bar{\lambda}\times \bar{\lambda}$ on $E\times E $ by   $[\lambda]$ and $[\bar{\lambda}]$ respectively. Hence
	\[ \ker[\lambda]=L_1\times L_1,\quad \ker[\bar{\lambda}]=L_2\times L_2. \]
	
	\begin{lemma}\label{goodiso} Suppose $\ell\equiv 1\bmod{4}$ and $p>4\ell$. 
		For a loop $\alpha\neq [\lambda]$, $[\bar{\lambda}]$ of  $E\times E$ of degree $\ell^2$, let
		\[ L_{i}^\al=(L_i\times L_i)\cap \ker\al\ \ (i=1, 2). \]  
		Then $\alpha\mapsto L_i^{\alpha}$ gives a one-to-one correspondence 
		\[ \{\text{degree $\ell^2$ loops}\ \neq[\lambda]\ \text{or}\ [\bar{\lambda}]\}\leftrightarrow \{\text{$1$-dimensional subspaces of } L_i\times L_i\}.\]
		Consequently, $\ker\al=L_1^\al\times L_2^\al$, and if $L_1^\al=\{(P, kP): P\in L_1\}$ for some $k\neq 0$, then $L_2^\al=\{(-kQ, Q): Q\in L_2\}$.   
	\end{lemma}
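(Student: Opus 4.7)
My plan is to leverage the explicit matrix classification of loops at $[E \times E]$ from Theorem~\ref{thmE1728}. When $\ell \equiv 1 \bmod 4$ and $p > 4\ell$, there are exactly $\ell + 3$ loops: $2$ in Class $\Da$ (namely $[\lambda]$ and $[\bar\lambda]$), $2$ in Class $\Db$ (with matrices $\begin{psmallmatrix}\lambda & 0 \\ 0 & \bar\lambda\end{psmallmatrix}$ and $\begin{psmallmatrix}\bar\lambda & 0 \\ 0 & \lambda\end{psmallmatrix}$), and $\ell - 1$ in Class $\Na$ (with matrices $\begin{psmallmatrix}a & bi \\ \bar b i & \bar a\end{psmallmatrix}$ for $a,b \in \Z[i]$ satisfying $a\bar a + b\bar b = \ell$ and $ab \neq 0$). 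Removing $[\lambda]$ and $[\bar\lambda]$ leaves $\ell + 1$ loops, which matches the $\ell + 1$ one-dimensional $\F_\ell$-subspaces of $L_1 \times L_1$. So it suffices to show that each $L_1^\alpha$ is one-dimensional and that $\alpha \mapsto L_1^\alpha$ is injective; the decomposition $\ker\alpha = L_1^\alpha \oplus L_2^\alpha$ and the twist relation will emerge from the same computation.

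The key fact is that $i \in \gO$ acts on $E[\ell] = L_1 \oplus L_2$ as $t$ on $L_1$ and $-t$ on $L_2$. For $a = a_1 + a_2 i \in \Z[i]$ I define reductions $\tilde a_1 = a_1 + a_2 t$ and $\tilde a_2 = a_1 - a_2 t$ in $\F_\ell$ for the scalars by which $a$ acts on $L_1$ and $L_2$ (and observe $\bar a|_{L_j} = \tilde a_{3-j}$); similarly define $\tilde b_j$. For a Class $\Na$ loop $\alpha$ with matrix $M$, restricting $M(P,Q)^T = 0$ to $(P,Q) \in L_1 \times L_1$ yields the linear system
\[ \tilde a_1 P + t\tilde b_1 Q = 0, \qquad t\tilde b_2 P + \tilde a_2 Q = 0, \]
whose determinant $\tilde a_1 \tilde a_2 + \tilde b_1 \tilde b_2$ is the reduction mod $\ell$ of $a\bar a + b\bar b = \ell$ and hence vanishes. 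The system is not identically zero, for otherwise $\ell \mid a$ and $\ell \mid b$ in $\Z[i]$, contradicting $a\bar a + b\bar b = \ell$. Thus $L_1^\alpha$ is one-dimensional, of the form $\{(P, kP) : P \in L_1\}$ with $k = -\tilde a_1/(t\tilde b_1)$ when $\tilde a_1\tilde b_1 \neq 0$. The two Class $\Db$ loops supply the coordinate-axis subspaces $L_1 \times \{0\}$ and $\{0\} \times L_1$, which together with the Class $\Na$ contributions exhaust all $\ell + 1$ subspaces.

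A symmetric calculation on $L_2 \times L_2$ yields $L_2^\alpha = \{(-k'Q, Q) : Q \in L_2\}$ with $k' = -t\tilde b_2/\tilde a_2$. Since $(L_1 \times L_1) \cap (L_2 \times L_2) = 0$ and both $L_1^\alpha, L_2^\alpha \subset \ker\alpha$ are one-dimensional, dimension counting forces $\ker\alpha = L_1^\alpha \oplus L_2^\alpha$. The equality $k = k'$ cross-multiplies to $\tilde a_1 \tilde a_2 + \tilde b_1 \tilde b_2 \equiv 0 \bmod \ell$, the same identity already used, so the twist relation holds. Injectivity follows at once: $L_1^\alpha$ determines $k$, hence via the twist it determines $L_2^\alpha$, hence $\ker\alpha = L_1^\alpha \oplus L_2^\alpha$, hence $\alpha$ itself. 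The main obstacle is simply the bookkeeping of the two distinct $\Z[i]$-actions on the eigenspaces $L_1$ and $L_2$; once encoded via $\tilde a_j, \tilde b_j$, the whole lemma collapses to the single identity coming from $a\bar a + b\bar b = \ell$.
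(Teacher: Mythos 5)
Your argument is correct in substance, but it takes a genuinely different route from the paper's. The paper works with the quadruple parametrization of non-diagonal kernels: a Class $\Na$ loop corresponds to $(u,v,-v,u)$ with $u^2+v^2=-1$, and using $S=R+R'$, $S^*=tR-tR'$ one sees $(R,(u+vt)R)\in L_1^\al$; conversely, for each line $\la (R,kR)\ra$ with $k\neq 0$ the paper solves $u+vt=k$, $u-vt=-k^{-1}$ to get the unique $(u,v)=\bigl(\frac{k-k^{-1}}{2},\frac{k+k^{-1}}{2t}\bigr)$, so injectivity and surjectivity are obtained by an explicit inverse construction, and $L_2^\al=\la(-kR',R')\ra$ is read off simultaneously. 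You instead take the loop matrices $\begin{psmallmatrix} a & bi\\ \bar{b}i & \bar{a}\end{psmallmatrix}$ from the proof of Theorem~\ref{thmE1728}, reduce the entries to their eigenvalues on $L_1$ and $L_2$, and note that the restricted $2\times 2$ system has determinant $\equiv a\bar{a}+b\bar{b}=\ell\equiv 0$ but is not identically zero, so $L_1^\al$ is a line; surjectivity then comes from the count $\ell+1=\ell+1$ rather than from an inverse formula, and the twist relation $k=k'$ drops out of the same determinant identity. (Your slope $k=-\tilde{a}_1/(t\tilde{b}_1)$ does agree with the paper's $u+vt$.) The counting argument buys brevity and avoids solving for the quadruple; the paper's constructive version buys the explicit formulas that are reused later (e.g.\ in Lemma~\ref{lemma:kerg}).

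One step should be made explicit to close your injectivity argument: you must rule out that a Class $\Na$ loop has $L_1^\al$ equal to a coordinate axis $L_1\times\{0\}$ or $\{0\}\times L_1$; otherwise your hedge ``when $\tilde{a}_1\tilde{b}_1\neq 0$'' is unresolved and, in principle, an $\Na$ loop could collide with one of the two $\Db$ loops, breaking both injectivity and the count-based surjectivity. This is immediate to fix: such a line would place a nonzero point of the form $(P,0)$ or $(0,Q)$ in a non-diagonal kernel, contradicting Lemma~\ref{lemma:diagonal}; equivalently, $\tilde{a}_1=0$ or $\tilde{b}_1=0$ would force $\ell\mid a\bar{a}$ or $\ell\mid b\bar{b}$, impossible since $ab\neq 0$ and $a\bar{a}+b\bar{b}=\ell$ give $0<a\bar{a},\,b\bar{b}<\ell$. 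With that one line added, all four reduced eigenvalues are nonzero, $k\in\F_\ell^\times$ is well defined, and your proof is complete.
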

	\begin{proof} 
		We prove the one-to-one correspondence for $\al\leftrightarrow L_1^\al$. The case for  $\al\leftrightarrow L_2^\al$ is parallel.
		
		If $\ker\al=L_1\times L_2$ or $L_2\times L_1$, clearly $L_1^\al=L_1\times \{0\}$ or $\{0\}\times L_1$ which is $1$-dimensional. Conversely these two lines correspond to these two diagonal kernels.
		
		If $\al$ is non-diagonal, suppose it is associated to the quadruple $(u,v,-v,u)$ such that $u^2+v^2=-1$, then $(R, (u+vt) R)\in \ker\al\cap (L_1\times L_1)$ which must be one dimensional.
		
		On the other hand, we may assume the $1$-dimensional subspace is generated by $(R, kR)$ for some $k\neq 0$.  By  Theorem~\ref{thmE1728}, we need to find a unique quadruple $(u,v,-v,u)$ such that $u^2+v^2=-1$ and $(R, kR)\in \la (S, uS+vS^*), (S^*, -vS+uS^*) \ra$. 
		Note that $iR=tR$, $iR'=-tR'$, $S=R+R'$ and $S^*=tR-tR'$, where  $t^2=-1$. 	Then
		\[ \begin{pmatrix} R \\ uR+vtR 	
		\end{pmatrix}=\frac{1}{2} \begin{pmatrix} S \\ uS+v S^* 	
		\end{pmatrix}-\frac{t}{2} \begin{pmatrix} S^* \\ -vS+ uS^* 	
		\end{pmatrix}.
		\]
		By Lemma~\ref{lemma:diagonal}, a non-diagonal kernel can not contain two elements of the form $(R, *)$, hence $u+vt=k$. Then $u^2+v^2=(u+vt)(u-vt)=-1$ and hence $u-vt=-k^{-1}$. There is a unique $(u,v)=(\frac{k-k^{-1}}{2}, \frac{k+k^{-1}}{2t})$ satisfying these conditions.  We also see $L_2^\al$ in this case is the $1$-dimensional subspace generated by $(R', -k^{-1}R')$ or equivalently by $(-kR', R')$. 
	\end{proof}

	\begin{prop}\label{llc}
		Suppose $p>4\ell^2$. Every loop $\va$ of $E\times E$ of degree $\ell^4$ corresponds to a matrix of the following form:
		\begin{enumerate}[I]
			\item $\ell I_2$. In this case $\ker\va\cong (\Z/\ell\Z)^4$.  
			\item $\begin{pmatrix}
				a & bi \\
				\bar{b}i  & \bar{a} 
			\end{pmatrix}$, where $a, b\in \Z[i]$, $a\bar{a}+ b\bar{b}= \ell^2$ and $ab\neq 0$.   In this case $\ker(\va)\cong (\Z/\ell^2\Z)^2$.
			\item $\lambda \begin{pmatrix}
				a & bi\\
				\bar{b}i & \bar{a}
			\end{pmatrix}$ or $ \bar{\lambda} \begin{pmatrix}
				a & bi\\
				\bar{b}i & \bar{a}
			\end{pmatrix}$, where $a, b\in \Z[i]$ and $a\bar{a}+ b\bar{b}= \ell$.  In this case $\ker\va\cong (\Z/\ell\Z)^2\times \Z/\ell^2\Z$ which occurs only if $\ell\equiv 1\bmod{4}$.
		\end{enumerate}
		
		As a consequence, every loop of degree $\ell^4$ can be factorized as the product of two loops of degree $\ell^2$. Moreover, loops in Case $(2)$ is uniquely factorized as the composition of two edges of degree $\ell^2$. 
	\end{prop}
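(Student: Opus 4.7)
The plan is to mirror the strategy of Theorem~\ref{thmE1728} one level higher. A loop at $[E\times E]$ of degree $\ell^4$ corresponds to a matrix $M\in M_2(\gO)$ satisfying $M^+M=\ell^2 I$, considered up to left multiplication by $G=\Aut(E\times E)$ (Lemma~\ref{lemma:o17281}). Since $M^+M=\ell^2 I$ forces the reduced norm of each entry of $M$ to be at most $\ell^2$, the assumption $p>4\ell^2$ combined with the explicit norm formula \eqref{eq:nrd1} eliminates the $j$- and $k$-components and places every entry in $\Z[i]$. After absorbing units $\pm 1,\pm i$ via the $G$-action, one normalizes
\[ M = \begin{pmatrix} a & bi \\ ci & d \end{pmatrix}, \quad a,b,c,d \in \Z[i], \]
so that $M^+M=\ell^2 I=MM^+$ yields $a\bar a=d\bar d$, $b\bar b=c\bar c$, $a\bar a+b\bar b=\ell^2$, and $a\bar c=b\bar d$, in direct analogy with Theorem~\ref{thmE1728}.

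I would then run a case analysis via the Gaussian-prime content of $(a,b,c,d)$ using that $\Z[i]$ is a PID. If a Gaussian prime $\pi$ divides all four entries, then $\Nrd(\pi)\mid\ell^2$, so $\Nrd(\pi)\in\{\ell,\ell^2\}$. Pulling out a factor of norm $\ell^2$ (namely $\ell$ itself when inert, or $\lambda\bar\lambda$ when split) reduces $M$ to $\ell N$ with $N\in G$, giving Case~(I). Pulling out only a single Gauss prime $\lambda$ of norm $\ell$ (which requires $\ell\equiv 1\bmod 4$) produces $M=\lambda N$ with $N^+N=\ell I$, giving Case~(III). In the remaining primitive case, setting $A=\gcd(a,d)$ and $C=\gcd(b,c)$ and using $a\bar c=b\bar d$ to force $A\bar C\in\Z$---exactly as in part (ii) of the proof of Theorem~\ref{thmE1728}---one may further $G$-normalize to $d=\bar a$, $c=\bar b$, producing Case~(II). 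The kernel descriptions then follow: $\ker[\ell I]=(E\times E)[\ell]\cong(\Z/\ell\Z)^4$; primitivity in Case~(II) yields $(\Z/\ell^2\Z)^2$; and in Case~(III) the factor $[\lambda]$ contributes a $(\Z/\ell\Z)^2$ piece while the primitive degree-$\ell^2$ factor contributes a $\Z/\ell^2\Z$ piece.

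For the factorization consequence, the key observation is that the matrices $\begin{pmatrix}a & bi \\ \bar b i & \bar a\end{pmatrix}$ with $a,b\in\Z[i]$ form a subring $R\subset M_2(\Z[i])$ closed under $M\mapsto M^+$, on which $M^+M=(a\bar a+b\bar b)\,I$ is scalar and multiplicative. Hence a product of two degree-$\ell^2$ loop matrices in $R$ is again in $R$ of matrix-norm $\ell^2$. In Case~(I) one factors $\ell I=M_0M_0^+$ for any degree-$\ell^2$ loop matrix $M_0$; in Case~(III) one writes $\lambda N=[\lambda]\circ N$, where $[\lambda]$ has matrix $\lambda I\in R$ of norm $\ell$; in Case~(II) the primitive norm-$\ell^2$ element of $R$ must be factored into two norm-$\ell$ elements of $R$.

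The main obstacle is the uniqueness claim in Case~(II). The ring $R$ is isomorphic to the Lipschitz-type order $\Z[i]+\Z[i]\bj$ with $\bj^2=-1$ and $\bj z=\bar z\bj$, sitting in the definite quaternion algebra generated by $i$ and $\bj$. Primitivity of the norm-$\ell^2$ element forces its left ideal to factor, uniquely up to units, through a unique chain of maximal left ideals of norm $\ell$. Carefully tracking which unit ambiguities are absorbed by the $G$-action on each of the two factor isogenies separately then yields a unique ordered pair of degree-$\ell^2$ edges whose composition is the given Case~(II) loop. The earlier steps---norm truncation, GCD case analysis, and kernel identification---are direct generalizations of the degree-$\ell^2$ calculation, so this quaternion-order factorization argument is the technical heart of the proof.
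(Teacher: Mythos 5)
The classification half of your plan is essentially what the paper does: the matrix of a degree-$\ell^4$ loop satisfies $M^+M=\ell^2 I$, the bound $p>4\ell^2$ together with \eqref{eq:nrd1} puts the entries in $\Z[i]$, and the gcd analysis of Theorem~\ref{thmE1728} produces the three shapes (the paper simply cites that proof). The divergence is in the kernel statements and the factorization claims, and there your proposal has real gaps. For Case II you assert that primitivity yields $\ker\va\cong(\Z/\ell^2\Z)^2$ with no argument (what is needed is that $M\not\equiv 0$ modulo each prime of $\Z[i]$ above $\ell$ forces the $\ell$-torsion of $\ker\va$ to have $\F_\ell$-dimension at most $2$); the paper instead proves, via $i$-equivariance and Lemma~\ref{goodiso}, that a composition of two degree-$\ell^2$ loops avoiding $[\lambda],[\bar\lambda]$ and without backtracking is a good extension, and then a count ($\ell(\ell+1)$ such compositions versus $\sigma(\ell^2)-1=\ell^2+\ell$ Case II matrices) gives simultaneously the kernel structure and the existence of a factorization of every Case II loop into two degree-$\ell^2$ loops. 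Your substitute for this, existence and uniqueness up to unit migration of factorizations of a primitive norm-$\ell^2$ element of the order $R\cong\Z[i]+\Z[i]\bj$, is precisely the part you defer as ``the technical heart'', so the heaviest step of your argument is not actually carried out; note also the slip that $\lambda I_2\notin R$ (its $(2,2)$ entry is $\lambda$, not $\bar\lambda$), so the Case III factor you invoke does not even lie in the ring you set up.

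The decisive gap is the scope of your uniqueness claim. The proposition asserts, and Lemma~\ref{llc2} and Theorem~\ref{thm:trans} later use, that a Case II loop factors \emph{uniquely} as a composition of two $(\ell,\ell)$-\emph{edges}; a priori the intermediate vertex can be any $\psas$, not $[E\times E]$, in which case the factorization does not correspond to a product of two matrices $A,B$ with $A^+A=B^+B=\ell I$ at all, but to a relation of the form $B^+HB=\ell I$ for some Hermitian $H\neq I$; and even for factorizations through $[E\times E]$ the factors are only $G$-translates of the standard loop matrices and need not lie in $R$. Left-ideal factorization inside $R$ therefore controls only a restricted class of factorizations and cannot by itself deliver the stated uniqueness. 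The paper's argument is both shorter and complete: for any factorization $\va=\al\circ\be$ into $(\ell,\ell)$-isogenies, $\ker\be$ is a subgroup of $\ker\va\cong(\Z/\ell^2\Z)^2$ isomorphic to $(\Z/\ell\Z)^2$, hence equals the unique such subgroup; this pins down $\be$, and then $\al$, with no restriction on the intermediate vertex. Without this (or an equivalent) observation your proposal does not prove the ``Moreover'' part of the statement.
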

	
	\begin{proof} Suppose $M$ corresponds to $\va$. Then $M^+M=\ell^4 I$. If $p>4\ell^2$, by following the same argument in the proof of Theorem~\ref{thmE1728}, we can deduce that $M\in M_2(\Z[i])$ has the form of  I, II or III. 
		
		Clearly a loop $\va$ in Case I is the composition of $\al$ and $\hat{\al}$ where $\al$ is a loop of degree $\ell^2$. It is also clear $\ker(\al)\cong (\Z/\ell\Z)^4$. 
		
		A loop in Case III is the extension of $[\lambda]$ or $[\bar{\lambda}]$ of a non-diagonal loop $\al$ of degree $\ell^2$. By Lemma~\ref{goodiso}, $L_1^\al\cong L_2^\al \cong \Z/\ell\Z$, thus this  extension is bad   and $\ker\va=\Z/\ell^2\Z\times (\Z/\ell\Z)^2$. 
		
		Now let $\va= \al\circ \beta$ be a loop of degree $\ell^4$ where $\al, \beta$ are two loops of degree $\ell^2$ such that $\al, \beta \notin \{[\lambda],\ [\bar{\lambda}]\}$ and $\beta\ne \hat{\al}$.  
		
		We first claim that $\al$ is a good extension of $\beta$. If not, then $\al$ is a bad extension of $\beta$. Hence $\beta((E\times E)[\ell]) \cap \ker(\al)\cong \Z/\ell\Z$ is an $\F_{\ell}$-line generated   by some  $(P, Q) \in (E\times E)[\ell]$. Since $i\circ \beta= \beta\circ i$, we have
		\[ (i(P), i(Q)) \in i\circ \beta((E\times E)[\ell])= \beta\circ i((E\times E)[\ell])= \beta((E\times E)[\ell]). \]
		Similarly, by $i\circ \al= \al\circ i$, we have $(i(P), i(Q)) \in \ker(\al)$. Hence $(i(P), i(Q))= c (P, Q)$ for some $c\in \F_\ell$. This is impossible if $\ell\equiv 3\bmod{4}$. If $\ell\equiv 1\bmod{4}$, then  both of $P$ and $Q$ are in either $L_1$ or $L_2$. Since $\beta((E\times E)[\ell]) \subseteq \ker(\hat{\beta})$,  we have $(P, Q)\in \ker(\hat{\beta})\cap \ker(\al)$. However, by Lemma~\ref{goodiso}, there is only one loop of degree $\ell^2$ not of the form $[\lambda]$ whose kernel contains $(P,Q)$, thus $\hat{\beta}=\al$, which is a contradiction.
		
		We then show  the factorization of $\va$ as $\al\circ \beta$, where $\al$ and $\beta$ are two edges of degree $\ell^2$, is unique. Indeed,  
		$\ker(\va)\cong (\Z/\ell^2\Z)^2$ has a unique subgroup which is isomorphic to $(\Z/\ell\Z)^2$. This subgroup must be  $\ker(\beta)$. Thus $\beta$ is unique and so is $\al$.  
		
		In conclusion, loops which is the composition of two loops $\neq [\lambda],\ [\bar{\lambda}]$ and without backtracking are all in Case II, with kernels $\cong (\Z/\ell^2\Z)^2$, and the number of such loops is $(\ell+1)\ell$. 
		However, following the proof of Theorem~\ref{thmE1728}, the number of loops of degree $\ell^4$ in Case II is $\sigma(\ell^2)- 1= \ell^2+\ell$. Thus loops of degree $\ell^4$ in Case II are all products of loops of degree $\ell^2$.
	\end{proof}

	\begin{lemma}\label{llc2}
		Suppose  $p> 4\ell^2$ and $\ell \equiv 1 \pmod 4$.
		
		If  $\al$ and $\beta$ are two edges from $E\times E$ to a vertex $V\neq E\times E$ such that  the loop $\va=\hat{\al}\circ \beta$ has a factor of $[\lambda]$ or $[\bar{\lambda}]$. Then
		\begin{enumerate}[1]
			\item $\al$ and $\beta$ are not in the same $G$-orbit.
			\item If $\va=\tau\circ [\lambda]$, then $\hat{\va}=\hat{\beta}\circ \al= \hat{\tau}\circ [\bar{\lambda}]$.
			\item If $\va=\tau\circ  [\lambda]$, then
			\begin{equation} \label{eq:kerg1} \ker(\beta)\cap (L_1\times L_1)= L_1^\tau,\quad  \ker(\al)\cap (L_2\times L_2)= L_2^{\hat{\tau}}. \end{equation}
			Hence $\tau$, $\al$ (resp. $\tau$, $\beta$) and  $\va$ are uniquely determined by $\beta$ (resp. $\al$).	
		\end{enumerate}
		
		On the other hand, if an edge $\beta: E\times E\rightarrow V$ of degree $\ell^2$ satisfies
		\begin{equation} \label{eq:kerg} \dim \ker(\beta)\cap (L_i\times L_i)=1, \end{equation}
		for $i=1$ (resp. $i=2$), then there exists $\al:  E_{1728}\times E_{1728}\rightarrow V$ such that $\va=\hat{\al}\circ \beta=\tau\circ [\lambda]$  (resp. $\va=\hat{\al}\circ \beta=\tau\circ [\bar{\lambda}]$)  for some loop $\tau$.
	\end{lemma}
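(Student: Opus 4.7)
I work in Case~III of Proposition~\ref{llc}, where $\ker(\va)\cong (\Z/\ell\Z)^2\times \Z/\ell^2\Z$ and $\tau\neq [\lambda], [\bar\lambda]$. Under $p>4\ell^2$, the matrix $T$ of $\tau$ has entries in $\Z[i]$ (by the argument of Theorem~\ref{thmE1728}), so $T$ and $T^+$ commute with $\lambda,\bar\lambda\in\Z[i]$. For (1), if $\beta=\al\circ g$ for some $g\in G$, then $\va=\hat\al\beta=[\ell]\circ g$ has kernel $(E\times E)[\ell]\cong(\Z/\ell\Z)^4$, which is Case~I, incompatible with Case~III. For (2), dualizing $\va=\tau\circ[\lambda]$ gives $\hat\va=[\bar\lambda]\circ\hat\tau=\hat\tau\circ[\bar\lambda]$, the last equality by commutativity of $\bar\lambda$ with the $\Z[i]$-entries of $T^+$; the identity $\hat\va=\hat\beta\circ\al$ is immediate from $\va=\hat\al\circ\beta$.

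\textbf{Part (3).} The key identities are $\al\circ\tau=[\bar\lambda]\circ\beta$ and $\al\circ[\lambda]=\beta\circ\hat\tau$, obtained from $\hat\al\beta=\tau[\lambda]$ and $\hat\beta\al=\hat\tau[\bar\lambda]$ by pre-composing with $\al$ (resp.\ $\beta$), using $\al\hat\al=[\ell]_V$ and $\beta\hat\beta=[\ell]_V$, and canceling the surjective isogeny $[\bar\lambda]$. Evaluating $\al\circ[\lambda]=\beta\circ\hat\tau$ at $(P,Q)\in L_2^{\hat\tau}$ yields $\al(\lambda(P,Q))=0$; since $\lambda$ acts as the nonzero scalar $2x_\ell$ on $L_2$, we get $\al(P,Q)=0$, so $L_2^{\hat\tau}\subseteq\ker\al\cap L_2\times L_2$. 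A dimension count (the equality case $2$ would force $V\cong E\times E$) gives $\ker\al\cap L_2\times L_2=L_2^{\hat\tau}$. Symmetrically, evaluating $\hat\beta\circ\al=\hat\tau\circ[\bar\lambda]$ at $(P,Q)\in\ker\al\cap L_1\times L_1$ yields $\hat\tau(2x_\ell(P,Q))=0$ and hence $(P,Q)\in L_1^{\hat\tau}$, so $\ker\al\cap L_1\times L_1\subseteq L_1^{\hat\tau}$. If this inclusion were equality, then combined with $\ker\al\cap L_2\times L_2=L_2^{\hat\tau}$ we would get $\ker\al=L_1^{\hat\tau}\oplus L_2^{\hat\tau}=\ker\hat\tau$, forcing $V\cong E\times E$---contradiction. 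Hence $\ker\al\cap L_1\times L_1=0$. Now apply $\al\circ\tau=[\bar\lambda]\circ\beta$ at $(P,Q)\in\ker\beta\cap L_1\times L_1$: $\beta(P,Q)=0$ gives $\al(\tau(P,Q))=0$, so $\tau(P,Q)\in\ker\al$; since $\tau$ preserves $L_1\times L_1$ (as $T\in M_2(\Z[i])$), we obtain $\tau(P,Q)\in\ker\al\cap L_1\times L_1=0$, so $(P,Q)\in L_1^\tau$. Combined with the Grassmann lower bound $\dim(\ker\beta\cap L_1\times L_1)\geq 1$ (since $\ker\beta$ and $L_1\times L_1$ are both $2$-dimensional subspaces of the $3$-dimensional space $\ker(\va)[\ell]$), we get $\ker\beta\cap L_1\times L_1=L_1^\tau$. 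The uniqueness of $\tau,\va,\al$ from $\beta$ then follows via Lemma~\ref{goodiso} (which determines $\tau$ from $L_1^\tau$) and the formula $\ker\hat\al=\beta(\ker\va)$.

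\textbf{Converse.} Given $\beta:E\times E\to V$ with $\dim(\ker\beta\cap L_1\times L_1)=1$, the image $\beta(L_1\times L_1)$ is a $1$-dimensional isotropic subspace of $V[\ell]$. Any $2$-dimensional maximal isotropic extension $K\subseteq V[\ell]$ determines an edge $\hat\al:V\to E\times E$ with $\ker\hat\al=K$; the inclusion $L_1\times L_1\subseteq\beta^{-1}(K)=\ker(\hat\al\beta)$ shows that $\va=\hat\al\beta$ factors as $\tau\circ[\lambda]$ for some loop $\tau$ of degree $\ell^2$. The main obstacle in the whole plan is the careful bookkeeping in Part~(3): playing off the two morphism identities $\al\circ\tau=[\bar\lambda]\circ\beta$ and $\al\circ[\lambda]=\beta\circ\hat\tau$ against each other, and invoking $V\neq E\times E$ at exactly the right step to rule out the degeneration $\ker\al=\ker\hat\tau$.
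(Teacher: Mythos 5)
The forward half of your proposal (parts (1)--(3)) is correct and takes a genuinely different route from the paper: you play off the morphism identities $\al\circ\tau=\beta\circ[\bar{\lambda}]$ and $\al\circ[\lambda]=\beta\circ\hat{\tau}$ against dimension counts and the hypothesis $V\neq E\times E$, whereas the paper computes the $\ell$-torsion of $\ker(\va)$ explicitly (it is generated by $L_1\times\{0\}$, $\{0\}\times L_1$ and $L_2^{\tau}$) and then pins down $\ker(\beta)$ by a short case analysis using the Weil pairing. Your route avoids that case analysis at the cost of invoking $V\neq E\times E$ twice to exclude the degenerations $\ker\al=L_2\times L_2$ and $\ker\al=\ker\hat{\tau}$, which is a fair trade. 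One correction: the right-hand side of your first identity must be $\beta\circ[\bar{\lambda}]$, not $[\bar{\lambda}]\circ\beta$ (the latter does not typecheck, since $[\bar{\lambda}]$ is not an endomorphism of $V$); your evaluation at points of $\ker\beta\cap(L_1\times L_1)$ still goes through because $\bar{\lambda}$ acts on $L_1\times L_1$ by the nonzero scalar $2x_{\ell}$, hence preserves $\ker\beta\cap(L_1\times L_1)$.

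The converse is where you have a genuine gap. You take an arbitrary maximal isotropic subgroup $K\subseteq V[\ell]$ containing the line $\beta(L_1\times L_1)$ and assert that it ``determines an edge $\hat{\al}:V\rightarrow E\times E$.'' Nothing forces the codomain to be $E\times E$: such a $K$ determines an edge from $V$ to the quotient $V/K$, and among the $\ell+1$ maximal isotropic subgroups containing a fixed line most quotients are other vertices; correspondingly, after dividing $\hat{\al}\circ\beta$ by $[\lambda]$ you only get a degree-$\ell^2$ isogeny out of $E\times E$, not a loop. Indeed your own part (3) shows $\al$ is uniquely determined by $\beta$, so ``any extension $K$'' cannot be correct. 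The repair is essentially the paper's converse argument: let $\tau$ be the degree-$\ell^2$ loop with $L_1^{\tau}=\ker\beta\cap(L_1\times L_1)$ given by Lemma~\ref{goodiso}, verify (using that $\ker\beta$ is isotropic, e.g.\ by checking $[\lambda](\ker\beta)=L_2^{\tau}$) that $\ker\beta\subseteq\ker(\tau\circ[\lambda])$, conclude $\tau\circ[\lambda]=\gamma\circ\beta$ for some degree-$\ell^2$ isogeny $\gamma:V\rightarrow E\times E$, and take $\al=\hat{\gamma}$; this produces the specific $K=\ker\gamma$ that your argument needs but does not construct.
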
		
	\begin{proof} (1) is easy, since if $\al=\beta g$ for $g\in G$, then $\hat{\beta}\circ \al=\ell g$ belongs to Case (1) in Proposition~\ref{llc}. (2) follows from the fact that $[\lambda]$ commutes with $\tau$. 
		
		For (3), we only need to study $\ker(\beta)$. Note that
		\begin{enumerate}[a]
			\item  $\va$ belongs to Case (3) in Proposition~\ref{llc}, thus $\ker(\va)\cong (\Z/\ell\Z)^2\times \Z/\ell^2\Z$;
			\item $L_2^\tau= (L_2\times L_2)\cap \ker(\tau)\subseteq \ker(\va)$; 
			\item  $L_1\times L_1=\ker [\lambda]\subseteq \ker(\va)$. 
		\end{enumerate}
		Thus the $\ell$-part of $\ker(\va)$, which contains $\ker(\beta)$, is generated by $L_1\times\{0\}=\la (R, 0)\ra$, $\{0\}\times L_1=\la (0, R)\ra$ and $L_2^\tau$. 
		\begin{enumerate}[i]
			\item If $L_2^\tau=L_2\times \{0\}=\la (R', 0)\ra $, then $L_1^\tau=\{0\}\times L_1$. By computation, $\ker(\beta)= \la (U, 0), (0, R)\ra$ where $U\in E_{1728}[\ell]$, $U\notin L_1\cup L_2$. Thus
			\[ \ker(\beta)\cap (L_1\times L_1)= L_1^\tau,\quad \ker(\beta)\cap (L_2\times L_2)=0. \] 
			
			\item If $L_2^\tau=\{0\}\times L_2=\la (0,R')\ra $, then $L_1^\tau=L_1\times \{0\}$. The proof is similar to (i).
			
			\item 	If 	$L_2^\tau=\la(R', aR')\ra$ for some $a\neq 0$, by Lemma~\ref{goodiso}, then $L_1^\tau=\la(R, -a^{-1}R)\ra$.  In this case  $\la (R, 0), (0, R), (R', aR')\ra\supset \ker(\beta)$.
			We assume $\ker(\beta)= \la (R, cR), (R', bR+ aR')\ra$. By computing the Weil pairing, we get $c=-a^{-1}$ and $\ker(\beta)\cap (L_1\times L_1)= L_1^\tau$ and $\ker(\beta)\cap (L_2\times L_2)=0$. 
		\end{enumerate}
		This finishes the proof of (3). 
		
		Now suppose $\beta$ satisfies \eqref{eq:kerg} for $i=1$, we let $\tau$ be the loop such that $L_1^\tau=L$. Checking the argument above we see that  $\ker(\beta)\subset \ker( \tau\circ [\lambda])$, so $\va= \tau\circ [\lambda]$ factors through $\beta$.  
	\end{proof}
	\begin{lemma} \label{lemma:kerg} Suppose $\ell\equiv 1\bmod{4}$ and $p>4\ell^2$. 
		\begin{enumerate}[1]
			\item The diagonal non-loop isogenies satisfying \eqref{eq:kerg} are exactly those in Class $\Dc$, whose number is $4(\ell-1)$.
			\item The quadruples $\{a,b,c,d\}$ associated to  non-diagonal non-loop isogenies  satisfying \eqref{eq:kerg} for $i=1$ are parameterized by  
			\begin{equation}  a= k-k^{-1} -d,\ b= dt-kt,\ c=dt+ k^{-1} t, \end{equation}
			where $k\in \F_{\ell}^\times, d\in \F_{\ell}, k- k^{-1}\neq 2d$. If $k=\pm 1$ and $d\neq 0$ or $d\neq k=\pm t$, then the isogenies belong to Class $\Nc$ , other cases belong to Class $\Nd$. 
			The number of isogenies satisfying \eqref{eq:kerg}  for $i=1,2$ is $2(\ell-1)^2$, of which $8(\ell-1)$ are in Class $\Nc$ and $2(\ell-1)(\ell-5)$  in Class $\Nd$.
		\end{enumerate}
	\end{lemma}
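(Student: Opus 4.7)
For part (1), my plan is to exploit the fact that for a diagonal isogeny $\va$ with $\ker\va = K_1 \times K_2$, one has $\ker\va \cap (L_i \times L_i) = (K_1 \cap L_i) \times (K_2 \cap L_i)$; since each factor is a subspace of the $1$-dimensional line $L_i$, the product has dimension $1$ if and only if exactly one of $K_1, K_2$ equals $L_i$. Matching this against the classification in Proposition~\ref{diagker} of non-loop diagonal isogenies (Classes $\Dc$, $\Dd$, $\De$), only Class $\Dc$ fits, and the count $4(\ell-1)$ is read off directly from that proposition.

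For part (2), I would start from Proposition~\ref{prop:diag1} and write $\ker\va = \la (S, aS+bS^*), (S^*, cS+dS^*) \ra$ with $ad-bc = -1$. Translating a generic element $x(S, aS+bS^*) + y(S^*, cS+dS^*)$ into the eigenbasis $\{R, R'\}$ via $S = R + R'$ and $S^* = t(R - R')$, the first coordinate lies in $L_1$ iff $x = yt$, and under this substitution the second coordinate lies in $L_1$ iff $(a-d)t + (b+c) = 0$. Thus \eqref{eq:kerg} for $i=1$ reduces to the single linear equation $(a-d)t = -(b+c)$, and when it holds the intersection line is generated by some $(R, kR)$ with $k \in \F_\ell$. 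I would then verify directly that $a = k - k^{-1} - d$, $b = t(d - k)$, $c = t(d + k^{-1})$ satisfies both this linear equation and $ad - bc = -1$, and conversely that every solution arises uniquely from a pair $(k, d) \in \F_\ell^\times \times \F_\ell$ by reading $k$ off the second coordinate of the generator of $\ker\va \cap (L_1 \times L_1)$. The non-loop exclusion $k - k^{-1} \neq 2d$ exactly rules out the Class $\Na$ loops (those with $d = a$ and $c = -b$); I would also check that no Class $\Nb$ loop satisfies \eqref{eq:kerg}, since combining its defining equation $a^2 + b^2 = 1$ with the linear condition would force $a^2 + b^2 = 0$. This gives $(\ell-1)\ell - (\ell-1) = (\ell-1)^2$ quadruples for $i=1$.

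For the class split and the final total, I would inspect the parametrization case by case: $d = -a$ is equivalent to $k^2 = 1$, i.e., $k = \pm 1$, and the non-loop condition then forces $d \neq 0$, producing the $2(\ell-1)$ isogenies of the first case of Class $\Nc$; while $c = b$ is equivalent to $k^2 = -1$, i.e., $k = \pm t$, after which the exclusion reduces to $d \neq k$, producing another $2(\ell-1)$ isogenies of the second case of Class $\Nc$. The remaining $(\ell-1)^2 - 4(\ell-1) = (\ell-1)(\ell-5)$ quadruples for $i=1$ fall into Class $\Nd$. A parallel analysis with $L_2$ replacing $L_1$ gives the mirror condition $(a - d)t = b + c$ and the same counts for $i = 2$. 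The overlap among non-loops is empty: satisfying \eqref{eq:kerg} for both $i = 1$ and $i = 2$ forces $a = d$ and $c = -b$, i.e., Class $\Na$ loops, which are excluded. Adding the two disjoint counts yields the claimed totals $2(\ell-1)^2$, $8(\ell-1)$ in Class $\Nc$ and $2(\ell-1)(\ell-5)$ in Class $\Nd$. The main obstacle I anticipate is the bookkeeping at the boundary between Classes $\Nc$ and $\Nd$ and ensuring that the sub-exclusions in Proposition~\ref{nondiagker} (such as the "$a = 0$" and "$b = 0$" sub-conditions inside Class $\Nc$) are consistent with the parametrization and not double-counted against the Class $\Na$, $\Nb$ loop exclusions.
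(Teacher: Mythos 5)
Your proposal is correct and follows essentially the same route as the paper: both derive the linear condition $(a-d)t+(b+c)=0$ by expressing a generator of $\ker(\va)\cap(L_1\times L_1)$ in the eigenbasis $\{R,R'\}$, read off the parametrization in $(k,d)$ with the slope $k$ of the intersection line, exclude the loop case $k-k^{-1}=2d$ (Class $\Na$), and your class split and the disjointness of the $i=1$ and $i=2$ cases is just the bookkeeping the paper leaves implicit. One small slip in wording: Class $\Nb$ isogenies are not loops, but your computation showing that $c=b,\ d=-a$ together with the linear condition forces $a^2+b^2=0$ is exactly the check needed to see they never satisfy \eqref{eq:kerg}.
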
	
	\begin{proof} (1) is easy. For (2), if $\ker(\beta)$ contains an element $(R,uR)$ for some $u\neq 0$, then $(R,uR)=\frac{1}{2}[(S, aS+bS^*)-t(S^*, cS+dS^*) ]$, which means
		\[ ad=bc-1,\ a-d= bt+ct.\]
		Hence $(a+d)^2= (a-d)^2+4ad=-(b+c)^2+4bc-4=-(b-c)^2-4$, and 
		\[ (a+d)^2+(b-c)^2= (a+d+bt-ct)(a+d-bt+ct)=-4. \] 
		Plug in $a=d+bt+ct$, we get
		\[ d+bt=k,\ d+ct=-k^{-1}.  \]
		This gives the parametrization. However, we need to exclude the loop case, which means $a=d$ or equivalently $k- k^{-1}=2d$.
	\end{proof}
	
	\begin{thm} \label{thm:trans} Suppose $p>4\ell^2$.  Let $V$ be a vertex adjacent to $E\times E$.
		\begin{enumerate}[1]
			\item If there exists one isogeny $\beta: E\times E\rightarrow V$ satisfying \eqref{eq:kerg}, then all isogenies from $E\times E$ to $V$ satisfy \eqref{eq:kerg} and they form  two $G$-orbits.  
			\item For all other cases, $(\ell,\ell)$-isogenies from  $E\times E$ to $V$ form a $G$-orbit. 
		\end{enumerate}
	\end{thm}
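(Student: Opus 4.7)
My plan is to read off the $G$-orbit structure of edges from $E\times E$ to $V$ by studying, for each ordered pair of edges $\alpha, \beta\colon E\times E\to V$, the associated degree-$\ell^4$ loop $\varphi=\hat\alpha\circ\beta$ at $E\times E$ and invoking the classification in Proposition~\ref{llc} together with Lemma~\ref{llc2}. A small preliminary observation is that each of the subgroups $L_1\times L_1$ and $L_2\times L_2$ is stable under $G=\Aut(E\times E)$, since every $g\in G$ has entries in $\Z[i]$ and $L_1, L_2$ are the $t$- and $(-t)$-eigenspaces of $i$ acting on $E[\ell]$; consequently condition~\eqref{eq:kerg} is a $G$-invariant property of edges. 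I may restrict to $V\neq E\times E$ (the loop case $V=E\times E$ being already handled by \S3.2--\S3.3): under this restriction $\varphi$ cannot be of Case~(II) of Proposition~\ref{llc}, because the unique factorization of a Case~(II) loop through an intermediate vertex $E\times E$ would then clash with its factorization through $V$.

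Thus $\varphi$ is of Case~(I) or Case~(III). In Case~(I), $\varphi=\ell g$ for some $g\in G$; composing with $\alpha$ on the left and using injectivity of multiplication by $\ell$ on $\Hom(E\times E, V)$ yields $\beta=\alpha\circ g$, whence $\alpha\sim_G\beta$. In Case~(III), which only arises for $\ell\equiv 1\pmod 4$, Lemma~\ref{llc2}(3) forces $\beta$ to satisfy \eqref{eq:kerg} for some $i$, with the partner $\alpha$ satisfying \eqref{eq:kerg} for the opposite index, and Lemma~\ref{llc2}(1) guarantees that $\alpha$ and $\beta$ lie in distinct $G$-orbits. Part~(2) is immediate: if no edge to $V$ satisfies \eqref{eq:kerg}, only Case~(I) occurs for every pair, so the edges form a single $G$-orbit.

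For Part~(1), the key auxiliary statement is that no edge to $V\neq E\times E$ can satisfy \eqref{eq:kerg} simultaneously for $i=1$ and $i=2$. A direct computation with the quadruple description from Proposition~\ref{nondiag}, using the Weil-pairing constraint $ad-bc=-1$, shows that the simultaneous condition forces the quadruple into the form $\{a,b,-b,a\}$ with $a^2+b^2\equiv -1\pmod\ell$, and by Theorem~\ref{thmE1728}(1) these are precisely the loops of Class~$\Na$ at $E\times E$, contradicting $V\neq E\times E$. Now suppose $\beta_0\colon E\times E\to V$ satisfies \eqref{eq:kerg}, say for $i=1$, and let $\alpha_0$ be its partner from Lemma~\ref{llc2}, satisfying \eqref{eq:kerg} for $i=2$. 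For an arbitrary edge $\beta\colon E\times E\to V$, apply the dichotomy to $\hat\beta_0\circ\beta$: Case~(I) gives $\beta\sim_G\beta_0$ and, by $G$-invariance, $\beta$ also satisfies \eqref{eq:kerg} for $i=1$; in Case~(III), the $[\lambda]$ subcase would force $\beta_0$ to satisfy \eqref{eq:kerg} for $i=2$ as well, contradicting the auxiliary lemma, so the $[\bar\lambda]$ subcase holds and $\beta$ satisfies \eqref{eq:kerg} for $i=2$. The same dichotomy applied to any two edges satisfying \eqref{eq:kerg} for the same index rules out Case~(III) via the auxiliary lemma, hence such edges are $G$-equivalent. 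Putting the pieces together, the edges to $V$ partition into exactly the two $G$-orbits $[\beta_0]$ and $[\alpha_0]$.

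The main obstacle is the auxiliary lemma that no edge to $V\neq E\times E$ satisfies \eqref{eq:kerg} for both $i=1$ and $i=2$: it is this observation, reducing to the explicit parametrization of loops of Class~$\Na$, that pins down exactly why the ``two orbit'' phenomenon is confined to vertices on which \eqref{eq:kerg} is non-empty.
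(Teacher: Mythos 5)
Your argument is correct and follows essentially the same route as the paper: both proofs study the degree-$\ell^4$ loop $\hat{\al}\circ\beta$ attached to a pair of edges, exclude Case II of Proposition~\ref{llc} via its unique factorization (which would force the intermediate vertex to be $E\times E$), use Lemma~\ref{llc2} to see that Case III can occur only when \eqref{eq:kerg} holds, and read Case I as $G$-equivalence of the two edges (your explicit cancellation of $[\ell]$ in $\Hom(E\times E,V)$ just spells out what the paper leaves implicit). The only real divergence is the last bookkeeping step: the paper excludes a third orbit by the uniqueness part of Lemma~\ref{llc2}(3) (the loop through $[\lambda]$ or $[\bar{\lambda}]$, hence the partner edge, is determined by $\beta$), whereas you prove the disjointness claim that no non-loop edge satisfies \eqref{eq:kerg} for both $i=1$ and $i=2$ and then sort edges by the index they satisfy; this is an equivalent and arguably more transparent way to see why exactly two orbits occur, and it makes explicit a fact the paper only uses implicitly (e.g.\ in the count $2(\ell-1)^2$ of Lemma~\ref{lemma:kerg}). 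One small gap to patch: your verification of the disjointness claim uses the quadruple parametrization, which only covers non-diagonal edges, while the claim is also applied to edges that may be diagonal (Class $\Dc$). For a diagonal kernel $K_1\times K_2$, satisfying \eqref{eq:kerg} for both indices forces $\{K_1,K_2\}=\{L_1,L_2\}$, i.e.\ Class $\Db$, which is a loop; adding this one-line check closes the argument.
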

	\begin{proof} Suppose $\al, \beta$ are two edges from $E\times E$ to $V$, then $\va=\hat{\al}\circ \beta$ is a loop of degree $\ell^4$. Moreover, by the unique factorization of loops of Case II in Proposition~\ref{llc},  $\va$ doesn't belong to this case. By Lemma~\ref{llc2}, $\va$ belongs to Case III only if $\beta$ satisfies \eqref{eq:kerg}.
		For all other cases, $\va$ must be in Case I and $\al$ and $\beta$ are in the same $G$-orbit.
		
		Now suppose $\va$ belongs to Case III, then by 	Lemma~\ref{llc2}, $\al$ and $\beta$ are in two different $G$-orbits. Moreover $\hat{ g_1}\beta g_2$ for all $g_1, g_2\in G$ are in Case III, which in turn means that $\al g_1$ and $\beta g_2$ all satisfy \eqref{eq:kerg}. 
		
		We only need to show there is no other $G$-orbit. If not, suppose $\al'$ is an edge from  $E\times E$ to $V$ not in $\al G$ and $\beta G$. Then both $\va=\hat{\al}\circ\beta$ and $\va'= \hat{\al'}\circ\beta$ factor through $[\lambda]$ or $[\bar{\lambda}]$, however, there is only one such $\va$ by Lemma~\ref{llc2}.
	\end{proof}

	\begin{thm}\label{main1}
		Suppose $p>4\ell^2$. Consider the neighbor of $[E_{1728}\times E_{1728}]$.
		
		$(1)$ If  $\ell \equiv 1 \bmod 4$, the neighbor is given by the following   table:
		
		\begin{center}
			\begin{tabular}{|c|c|c||c|c|c|}
				\hline
				\#{Vertices} &  Multi-Edges & Edge Type	& \#{Vertices} & Multi-Edges & Edge Type	\\
				\hline
				$\frac{\ell-1}{2}$ & 8 &$\Dc$	&  $\frac{(\ell-1)(\ell-3)}{4}$ & 8 & $\Nc$-1 \\
				\hline
				$\frac{\ell-1}{2}$ & 4 &$\Dd$	&  $\frac{\ell-1}{2}$ & 16 &$\Nc$-2   \\
				\hline
				$\frac{(\ell-1)(\ell-3)}{8}$ & 8 &$\De$	&  $\frac{(\ell-1)(\ell^2-3\ell+6)}{16}$ & 16 &$\Nd$-1  \\
				\hline
				$\frac{\ell-1}{4}$ & 4 &$\Nb$	 & $\frac{(\ell-1)(\ell-5)}{16}$ & 32 & $\Nd$-2   \\
				\hline
			\end{tabular}
		\end{center}

		$(2)$ If $\ell \equiv 3 \bmod 4$, the neighbor is given by the following   table:
		
		\begin{center}
			\begin{tabular}{|c|c|c||c|c|c|}
				\hline
				\#{Vertices} & Multi-Edges &  Edge Type & \#{Vertices} & Multi-Edges & Edge Type	\\
				\hline
				$\frac{\ell+1}{2}$ & 4 & $\Dd$	 & $\frac{\ell+1}{4}$ & 4 &$\Nb$\\
				\hline
				$\frac{\ell^2-1}{8}$ & 8 &	$\De$	 &  $\frac{\ell^2-1}{4}$  &  8 & $\Nc$ \\
				\hline
				&  &   &  $\frac{\ell(\ell+1)(\ell-3)}{16}$  & 16 & $\Nd$ \\
				\hline
			\end{tabular}
		\end{center}

		$(3)$ If $\ell=2$, there are $3$ vertices adjacent
		to $[E_{1728}\times E_{1728}]$ ,  each connecting  with $4$ edges, $2$ vertices with diagonal and $1$ with non-diagonal kernels.	
	\end{thm}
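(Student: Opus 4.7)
The plan is to combine the $G$-orbit counts of edges from Propositions~\ref{diagker} and \ref{nondiagker} with the orbit-counting result Theorem~\ref{thm:trans}. The latter tells us that the $(\ell,\ell)$-isogenies from $[E\times E]$ to a fixed neighbor $V$ form either a single $G$-orbit (giving multi-edge count $|O_\varphi|$) or, precisely when \eqref{eq:kerg} holds, two $G$-orbits (giving multi-edge count $2|O_\varphi|$). So the strategy is: for each edge class, start with its total size, subtract the loops, split the remainder by whether \eqref{eq:kerg} is satisfied, and divide by the correct per-vertex multiplicity.

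For $\ell\equiv 1\bmod 4$ I would first remove the loops in Classes $\Da$, $\Db$, $\Na$ using Theorem~\ref{thmE1728}. Then using Lemma~\ref{lemma:kerg}(1), Class $\Dc$ consists entirely of \eqref{eq:kerg}-edges, contributing $4(\ell-1)/8=(\ell-1)/2$ vertices with $8$ multi-edges; Classes $\Dd$ and $\De$ do not satisfy \eqref{eq:kerg} and contribute $(\ell-1)/2$ vertices with $4$ edges and $(\ell-1)(\ell-3)/8$ vertices with $8$ edges respectively. Class $\Nb$ gives $(\ell-1)/4$ vertices with $4$ edges. For $\Nc$, Lemma~\ref{lemma:kerg}(2) isolates $8(\ell-1)$ edges satisfying \eqref{eq:kerg}, producing $(\ell-1)/2$ vertices with $16$ multi-edges (row $\Nc$-2); the remaining $2(\ell^2-1)-8(\ell-1)=2(\ell-1)(\ell-3)$ edges produce $(\ell-1)(\ell-3)/4$ vertices with $8$ multi-edges (row $\Nc$-1). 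Class $\Nd$ splits analogously: $2(\ell-1)(\ell-5)$ edges satisfy \eqref{eq:kerg} and give the $\Nd$-2 row, and the remaining $(\ell-1)(\ell^2-3\ell+6)$ edges give $\Nd$-1.

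For $\ell\equiv 3\bmod 4$ there are no $L_1,L_2$, so condition \eqref{eq:kerg} is vacuous and every non-loop class contributes $|\text{class}|/|O_\varphi|$ vertices of multiplicity $|O_\varphi|$; plugging in the orbit sizes from Propositions~\ref{diagker} and \ref{nondiagker} yields the second table. For $\ell=2$, direct inspection of the three loop matrices listed in Theorem~\ref{thmE1728} shows that the $(\ell+1)(\ell^2+1)-3=12$ non-loop edges partition into $3$ $G$-orbits of size $4$ (two diagonal and one non-diagonal), giving the claimed $3$ neighbors each with $4$ multi-edges.

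The main obstacle is the bookkeeping in part~(1): one must verify that the parametrization of \eqref{eq:kerg}-edges in Lemma~\ref{lemma:kerg}(2) correctly distributes between $\Nc$ and $\Nd$ (and does not spill into $\Nb$), and that the fused-orbit contributions $\Nc$-2 and $\Nd$-2 arise from applying Theorem~\ref{thm:trans} to pairs of orbits with matching vertex targets. The subtractions and arithmetic then become mechanical, and the totals must add up to $(\ell+1)(\ell^2+1)$ minus the loop count of Theorem~\ref{thmE1728}, which provides a useful consistency check on the final row of each table.
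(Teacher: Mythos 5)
Your proposal is correct and follows essentially the same route as the paper: the paper's own proof for odd $\ell$ simply combines Proposition~\ref{diagker}, Proposition~\ref{nondiagker}, Lemma~\ref{lemma:kerg} and Theorem~\ref{thm:trans} (with the loops removed via Theorem~\ref{thmE1728}), exactly the bookkeeping you describe, and the $\ell=2$ case is likewise settled by directly listing the $G$-orbits of the $12$ non-loop kernels. The class-matching point you flag (that the two fused orbits at an \eqref{eq:kerg}-vertex have the same edge type, so the $\Dc$, $\Nc$-2, $\Nd$-2 rows are well defined) is left implicit in the paper as well, so your accounting matches its level of detail.
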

	\begin{proof} For $\ell$ odd, this is a consequence of
		Proposition~\ref{diagker}, Proposition~\ref{nondiagker}, Lemma~\ref{lemma:kerg} and Theorem~\ref{thm:trans}.

		Now we consider the case  $\ell= 2$. Suppose   $P\in E[2]$ such that $P\ne i(P)$. Then  $E[2]=\{O,P, Q=i(P), S=P+Q\}$. Moreover $i(S)= S$. 
		Let $K_1=\{O,P\}$, $K_2=\{O,Q\}$ and $L=\{O,S\}$. Then the only diagonal  loop kernel  is $L\times L$. The 
		other $8$ diagonal isogenies form two $G$-orbits: 
		\[\{K_1\times K_1, K_1\times K_2, K_2\times K_1, K_2\times K_2\},\ 
		\{L\times K_1, L\times K_2, K_1\times L, K_2\times L\}. \]
		The kernels of the $2$    non-diagonal loops  are $\la (P, P), (Q, Q)\ra$ and $ \la(P, Q), (Q, P)\ra$. The other $4$  non-diagonal isogenies belong to one orbit.
	\end{proof}
	
	\section{Loops and neighbors of $[E_{0}\times E_{0}]$}
	In this section, let $E_{0}$ be the supersingular elliptic curve defined over $\F_p$  with $j$-invariant $0$ (which implies that $p\equiv 2\bmod{3}$). We know its endomorphism ring is
	\[ \gO_{0} ={\Z}+{\Z}\frac{1+i}{2}+{\Z}\frac{i+k}{3}+{\Z}\frac{j+k}{2},\ (i^2=-3,\ j^2=-p,\ ij=-ji=k). \]
	Note that the reduced norm is given by 
	\begin{equation}  \label{eq:nrd2} \begin{split}
			&\Nrd\left(x+y\frac{1+i}{2}+z \frac{i+k}{3} + w\frac{j+k}{2}\right )\\ &= \left(x+\frac{y}{2}\right)^2+ 3\left(\frac{y}{2}+\frac{z}{3}\right)^2+\frac{p(z^2+3zw+3w^2)}{3}.
	\end{split} \end{equation}

	Let $[E_{0}\times E_{0}]$ be the superspecial abelian surface $E_{0}\times E_{0}$ with the principal polarization $\{0\}\times E_{0}+ E_{0}\times \{0\}$ in the isogeny graph $\gG_p$. 
	
	Now results hereafter in this section are parallel to those in the previous section, whose proofs are almost identical and will be omitted.
	
	\begin{lem}\label{dio2}
		The number of integer solutions of Diophantine equation  $$x^2+xy+y^2+z^2+zw+w^2=n$$ is $12\sigma(n)-36\sigma(\frac{n}{3})$.
	\end{lem}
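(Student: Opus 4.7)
My plan is to mirror the classical proof of Jacobi's theorem (Lemma~\ref{dio1}), substituting the Eisenstein integers $\Z[\omega]$ ($\omega=e^{2\pi i/3}$) for the Gaussian integers $\Z[i]$. The crucial observation is that the binary form $Q(x,y):=x^2+xy+y^2$ is the norm form on $\Z[\omega]$, so the quaternary form in question splits as $Q(x,y)+Q(z,w)$. Writing $r_2(n)$ for the number of representations of $n$ by $Q$ and $r_4(n)$ for the number we wish to compute, the splitting yields the convolution
$$r_4(n) \;=\; \sum_{k=0}^{n} r_2(k)\,r_2(n-k),$$
or equivalently, with $q=e^{2\pi i\tau}$ and $\Theta(\tau):=\sum_{(x,y)\in\Z^2}q^{Q(x,y)}=\sum_{n\geq 0}r_2(n)q^n$, the identity $\Theta^2=\sum_n r_4(n)q^n$.

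The first step is the classical two-variable formula. Using unique factorization in $\Z[\omega]$ (which has six units $\pm 1,\pm\omega,\pm\omega^2$), one obtains
$$r_2(n) \;=\; 6\sum_{d\mid n}\chi_3(d) \quad (n\geq 1),\qquad r_2(0)=1,$$
where $\chi_3$ is the non-trivial Dirichlet character modulo~$3$. Consequently $\Theta$ is a weight-one modular form on $\Gamma_0(3)$ with character $\chi_3$, so $\Theta^2$ is a weight-two modular form on $\Gamma_0(3)$ with trivial character.

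The second step uses that the space of weight-two modular forms on $\Gamma_0(3)$ is one-dimensional (the genus of $X_0(3)$ is zero with two cusps, and there are no cusp forms), spanned by the Eisenstein series
$$-\tfrac{1}{2}\bigl(E_2(\tau)-3E_2(3\tau)\bigr) \;=\; 1 + \sum_{n\geq 1}\bigl(12\sigma(n)-36\,\sigma(n/3)\bigr)\,q^n.$$
Matching the constant term ($r_2(0)^2=1$) identifies $\Theta^2$ with this Eisenstein series, producing $r_4(n)=12\sigma(n)-36\sigma(n/3)$.

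The only nontrivial obstacle is the appeal to modular forms. One can bypass this by an elementary argument in the spirit of Jacobi's original proof: expand the convolution $r_4(n)=\sum_{k}r_2(k)r_2(n-k)$ directly using the formula from Step~1, and regroup the resulting double divisor sum via the multiplicativity of $\chi_3$, separating the contribution of divisors divisible by $3$ to recover the identity $\sigma(n)-3\sigma(n/3)=\sum_{d\mid n,\,3\nmid d}d$ that matches the claimed coefficient.
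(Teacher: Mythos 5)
Your argument is correct, but it is worth noting that the paper does not prove this lemma at all: like Lemma~\ref{dio1} (Jacobi's four-square formula), it is quoted as a classical counting result (essentially due to Liouville), and the surrounding section explicitly omits proofs as being ``parallel'' to those of \S 3. So you are supplying a proof where the paper has none. Your modular-forms route is sound: the form $x^2+xy+y^2$ is the norm form of $\Z[\omega]$, its theta series $\Theta$ has weight one on $\Gamma_0(3)$ with character $\chi_3$ (using $r_2(n)=6\sum_{d\mid n}\chi_3(d)$, which follows from unique factorization and the six units), hence $\Theta^2\in M_2(\Gamma_0(3))$, a one-dimensional space spanned by $-\tfrac12\bigl(E_2(\tau)-3E_2(3\tau)\bigr)=1+\sum_{n\ge1}\bigl(12\sigma(n)-36\sigma(n/3)\bigr)q^n$; matching constant terms gives the claim, and the numerics check (e.g.\ $n=1,2,3$ give $12,36,12$). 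The one caveat is your closing ``elementary bypass'': evaluating the convolution $\sum_{k}r_2(k)r_2(n-k)$, i.e.\ $\sum_k\bigl(\sum_{d\mid k}\chi_3(d)\bigr)\bigl(\sum_{e\mid n-k}\chi_3(e)\bigr)$, is not a mere regrouping by multiplicativity of $\chi_3$; it is exactly the hard content of the identity (a Liouville-type elementary argument of some length), so that paragraph should either be dropped or expanded, while the modular-form proof stands on its own given the standard dimension formula for $M_2(\Gamma_0(3))$.
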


	\begin{lemma} \label{lemma:o01} The group  $G=\Aut(E_0\times E_0)=\{ g\in M_2(\gO_0)\mid g^+g=I\}$ is the following group of order $72$: 
		\begin{align}
			\left\{	\begin{pmatrix} \pm 1,  \pm \frac{1+i}{2},  \pm \frac{1-i}{2} & 0 \\ 0 & \pm 1,  \pm \frac{1+i}{2}, \pm\frac{1-i}{2} \end{pmatrix}, \quad
			\begin{pmatrix} 0 & \pm 1,  \pm \frac{1+i}{2},  \pm \frac{1-i}{2} \\ \pm 1,  \pm \frac{1+i}{2},  \pm \frac{1-i}{2} & 0 \end{pmatrix}\right\}.\label{matrix_M_unitsE0}
		\end{align} 
	\end{lemma}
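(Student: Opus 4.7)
The plan is to mirror the strategy used for Lemma~\ref{lemma:o17281} but with the Eisenstein-like unit group of $\gO_0$ replacing the Gaussian one. A matrix $g = \begin{pmatrix} a & b \\ c & d \end{pmatrix} \in M_2(\gO_0)$ satisfies $g^+ g = I$ if and only if
\[
\Nrd(a) + \Nrd(c) = 1, \quad \Nrd(b) + \Nrd(d) = 1, \quad \bar{a}b + \bar{c}d = 0.
\]
Since reduced norms on $\gO_0$ take values in $\Z_{\geq 0}$, the first two equations immediately force each column to have exactly one entry of norm $1$ (a unit of $\gO_0$) and one entry equal to $0$. Thus the whole problem reduces to (i) enumerating the unit group $\gO_0^\times$ and (ii) checking which of the $2\cdot 2 = 4$ placements of zeros is compatible with the orthogonality relation $\bar{a}b + \bar{c}d = 0$.

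First I would determine $\gO_0^\times$. Writing an element as $x + y\frac{1+i}{2} + z\frac{i+k}{3} + w\frac{j+k}{2}$ and using the norm formula \eqref{eq:nrd2}, the term $\frac{p}{3}(z^2 + 3zw + 3w^2)$ is a positive-definite integer-valued form in $(z,w)$ scaled by $p/3 > 1$, so any element of norm $1$ must have $z = w = 0$. This reduces the problem to solving $x^2 + xy + y^2 = 1$ over $\Z$, whose six solutions $(x,y) \in \{(\pm 1,0),(0,\pm 1),(\pm 1,\mp 1)\}$ yield
\[
\gO_0^\times = \Bigl\{\pm 1,\ \pm \tfrac{1+i}{2},\ \pm \tfrac{1-i}{2}\Bigr\},
\]
a cyclic group of order $6$ (as expected from $\Aut(E_0)$).

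Next I would split into the two possibilities left by the norm equations. If $c = 0$, then $a$ is a unit, and the relation $\bar{a}b + \bar{c}d = \bar{a}b = 0$ forces $b = 0$; then $d$ must also be a unit. This yields $6 \times 6 = 36$ diagonal matrices, namely the matrices in the first block of \eqref{matrix_M_unitsE0}. Symmetrically, if $a = 0$ then $c$ is a unit, $\bar{c}d = 0$ gives $d = 0$, and $b$ is a unit, producing another $36$ anti-diagonal matrices, i.e.\ the second block of \eqref{matrix_M_unitsE0}. Adding the two disjoint contributions gives $|G| = 72$, and no other matrices can arise since $a$ and $c$ cannot both vanish (their norms sum to $1$).

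The only substantive step is pinning down $\gO_0^\times$, and the rest is a short case analysis on which entries vanish. No $G$-action or Weil pairing argument is needed here; the work is completely contained in the arithmetic of the single quaternion order $\gO_0$.
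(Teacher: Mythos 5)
Your proof is correct and is exactly the direct computation the paper has in mind: it leaves this lemma unproved as a ``simple computation'' parallel to Lemma~\ref{lemma:o17281}, namely reading off $\Nrd(a)+\Nrd(c)=\Nrd(b)+\Nrd(d)=1$ and $\bar a b+\bar c d=0$ from $g^+g=I$, using positivity of the norm form \eqref{eq:nrd2} to force $z=w=0$ and hence $\gO_0^\times=\{\pm 1,\pm\frac{1+i}{2},\pm\frac{1-i}{2}\}$, and concluding that $g$ is diagonal or anti-diagonal with unit entries, giving $36+36=72$ elements.
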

	
	We shall need the following notation in this section: 
	
	\begin{enumerate}[i] 
		\item 	If  $\ell\equiv 1\bmod{3}$, let
		\begin{enumerate}
			\item $\lambda:=x_{\ell}+ y_{\ell} \frac{1+ i}{2}\in \Z[\frac{1+ i}{2}]$ such that $x^2_{\ell}+ x_{\ell}y_{\ell}+ y^2_{\ell}=\ell$;  
			\item $L_1:= \ker(\lambda: E_{0}[\ell]\rw  E_{0}[\ell])$ and $L_2:= \ker(\bar{\lambda}: E_{0}[\ell]\rw  E_{0}[\ell])$; 
			\item  $R\in L_1\backslash\{0\}$ and $R'\in L_2\backslash\{0\}$;
			\item   $t:=-x_{\ell}/y_{\ell}\in \F_\ell$. Note that $t^2- t+1=0$.
		\end{enumerate} 
		
		\item	Let $S=R+R'\in E_{0}[\ell]$ if $\ell\equiv 1\bmod{3}$ and $S=P$ be a fixed  nonzero $P\in E_{0}[\ell]$ if $\ell\equiv 2\bmod{3}$. Let $S^*=\frac{1+ i}{2}(S)$.
		\item Let $G=\Aut(E_{0}\times E_{0})$. 
		\item For an isogeny $\va$ starting from $E_{0}\times E_{0}$, let $G_\va=\{g\in G: \va g=\va\}$ be the stabilizer of $\va$ by the $G$-action, and $O_\va=\{\va g: g\in G\}$ be the $G$-orbit of $\va$.
	\end{enumerate}
	
	Note that $L_1$ and $L_2$ are the only $1$-dimensional invariant $\F_\ell$-subspaces of the operator $\frac{1+ i}{2}$ on $E_{0}[\ell] $, with eigenvalues $t$ and $-t$ respectively; similarly,  $L_1$ and $L_2$ are the only $1$-dimensional invariant $\F_\ell$-subspaces of the operator $\frac{1- i}{2}$ on $E_{0}[\ell] $, with eigenvalues $1- t$ and $1+ t$ respectively.

	\subsection{Kernels of $(\ell,\ell)$-isogenies from  $E_{0}\times E_{0}$}

	\begin{lemma} \label{lemma:basis1}  The set $\{S, S^*\}$ is an $\F_{\ell}$-basis of   $E_{0}[\ell]$. 
	\end{lemma}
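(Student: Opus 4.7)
The plan is to parallel the proof of Lemma~\ref{lemma:basis}, splitting into the two cases $\ell\equiv 2\bmod 3$ and $\ell\equiv 1\bmod 3$. Since $E_0[\ell]$ is two-dimensional over $\F_\ell$, it suffices in both cases to show that $S$ and $S^*=\frac{1+i}{2}(S)$ are linearly independent over $\F_\ell$.

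First suppose $\ell\equiv 2\bmod 3$, so $S=P$ is a fixed nonzero element of $E_0[\ell]$. If $\frac{1+i}{2}(P)=cP$ for some $c\in\F_\ell$, then $c$ is a root (mod $\ell$) of the minimal polynomial of $\frac{1+i}{2}\in\gO_0$ acting on the line through $P$. A direct computation using $i^2=-3$ shows that $\bigl(\frac{1+i}{2}\bigr)^2=\frac{1+i}{2}-1$, so $c^2-c+1\equiv 0\pmod\ell$. The discriminant of this quadratic is $-3$, and $-3$ is a square in $\F_\ell$ precisely when $\ell\equiv 1\bmod 3$, contradicting the standing hypothesis. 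Hence $S$ and $S^*$ are independent.

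Next suppose $\ell\equiv 1\bmod 3$. Then $S=R+R'$ with $R\in L_1\setminus\{0\}$ and $R'\in L_2\setminus\{0\}$, and $\{R,R'\}$ is itself an $\F_\ell$-basis of $E_0[\ell]$ since $L_1$ and $L_2$ are the two distinct $\frac{1+i}{2}$-eigenlines. Using the eigenvalues of $\frac{1+i}{2}$ on $L_1$ and $L_2$ one expresses $S^*$ as an explicit linear combination of $R$ and $R'$, giving a $2\times 2$ change-of-basis matrix from $\{R,R'\}$ to $\{S,S^*\}$; its determinant simplifies to a nonzero element of $\F_\ell$ by means of $t^2-t+1=0$ (which rules out both $t=0$ and the one degenerate configuration that would make the determinant vanish). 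This is the direct analog of the determinant computation appearing in Lemma~\ref{lemma:basis}.

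I do not anticipate a serious obstacle: the only nontrivial ingredient is the quadratic residue condition deciding when $x^2-x+1$ splits in $\F_\ell$, which replaces the role of $x^2+1$ in the $E_{1728}$ case and is handled by elementary quadratic reciprocity. The rest is a small linear algebra verification in $\F_\ell$.
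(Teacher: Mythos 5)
Your proof is correct and is essentially the paper's intended argument: the paper omits this proof as ``almost identical'' to Lemma~\ref{lemma:basis}, and you reproduce exactly that template, replacing $c^2+1=0$ by the eigenvalue equation $c^2-c+1=0$ for $\frac{1+i}{2}$ when $\ell\equiv 2\bmod 3$, and doing the determinant check in the basis $\{R,R'\}$ when $\ell\equiv 1\bmod 3$ (where indeed $t^2-t+1=0$ rules out the degenerate value of $t$, since $\ell\neq 3$). One tiny caveat: the discriminant/quadratic-reciprocity criterion presupposes $\ell$ odd, so for $\ell=2$ (which is $\equiv 2\bmod 3$) you should instead just note directly that $x^2+x+1$ has no root in $\F_2$.
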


	\begin{prop}\label{nondiag1} \label{prop:diag11} There is a one-to-one correspondence  of the set of non-diagonal $(\ell, \ell)$-isogenies  from $ E_{0}\times  E_{0}$ and the set of generator matrices 
		\[ \left\{  \begin{pmatrix}
			S & S^*\\ aS+ bS^* & cS+dS^*
		\end{pmatrix}: a, b, c, d\in \F_\ell, ad-bc=-1\right\}. \]
	\end{prop}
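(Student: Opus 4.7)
The plan is to follow the same strategy as the proof of Proposition~\ref{prop:diag1}, adapted to the $E_0$ setting using Lemma~\ref{lemma:basis1} in place of Lemma~\ref{lemma:basis}. First I would verify that both sets in the claimed bijection have the same cardinality $\ell(\ell^2-1) = \ell^3 - \ell$: the right-hand set is parametrized by quadruples $(a,b,c,d)\in \F_\ell^4$ with $ad-bc=-1$, which is the number of elements of $\mathrm{SL}_2(\F_\ell)$ up to the sign $-1$, namely $\ell(\ell^2-1)$; the left-hand set has the same order by Corollary~\ref{cor:isogenies}. Thus it suffices to exhibit, for each non-diagonal $(\ell,\ell)$-isogeny, a generator matrix of the prescribed shape.

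Next I would take a non-diagonal $(\ell,\ell)$-isogeny $\varphi$ with an arbitrary $\F_\ell$-basis $\{(P,Q),(P',Q')\}$ of $\ker(\varphi)$. By Lemma~\ref{lemma:diagonal}, non-diagonality forces both $\{P,P'\}$ and $\{Q,Q'\}$ to be $\F_\ell$-bases of $E_0[\ell]$, and by Lemma~\ref{lemma:basis1} the pair $\{S,S^*\}$ is also a basis. I would then write $S = a_1 P + b_1 P'$ and $S^* = \frac{1+i}{2}(S) = c_1 P + d_1 P'$, so that the $\F_\ell$-linear change of basis $(P,P') \mapsto (S,S^*)$ in the first coordinate is invertible. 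Applying this to the kernel basis produces a new basis of $\ker(\varphi)$ of the form $\{(S, aS+bS^*),(S^*, cS+dS^*)\}$ for uniquely determined $a,b,c,d\in \F_\ell$, where $aS+bS^* = a_1 Q + b_1 Q'$ and $cS+dS^* = c_1 Q + d_1 Q'$.

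Finally I would impose the maximal isotropy condition via the Weil pairing. The triviality of $e_\ell$ on $\ker(\varphi)\times \ker(\varphi)$ gives
\[
e_\ell\bigl((S,aS+bS^*),(S^*,cS+dS^*)\bigr) = e_\ell(S,S^*)\cdot e_\ell(aS+bS^*, cS+dS^*) = 1,
\]
and since $e_\ell(aS+bS^*,cS+dS^*) = e_\ell(S,S^*)^{ad-bc}$, the relation reduces to $e_\ell(S,S^*)^{ad-bc+1}=1$. Because $\{S,S^*\}$ is a basis of $E_0[\ell]$, the Weil pairing $e_\ell(S,S^*)$ is a primitive $\ell$-th root of unity, so we conclude $ad-bc \equiv -1 \pmod{\ell}$, matching the prescribed form.

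The main obstacle is essentially bookkeeping: ensuring that Lemma~\ref{lemma:basis1} is used correctly to certify that $\{S,S^*\}$ forms a basis of $E_0[\ell]$ (which rests on the fact that $L_1,L_2$ are the only invariant lines of $\frac{1+i}{2}$ and on separate treatment of the $\ell\equiv 1$ versus $\ell\equiv 2 \pmod 3$ cases). Once this is in hand, the rest is a direct transcription of the $E_{1728}$ argument, with $i$ replaced by $\frac{1+i}{2}$.
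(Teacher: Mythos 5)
Your proposal is correct and follows essentially the same route as the paper: the paper omits the proof of this proposition, noting it is almost identical to that of Proposition~\ref{prop:diag1} for $E_{1728}\times E_{1728}$, which uses exactly your argument (equal cardinalities $\ell^3-\ell$, change of basis in the first coordinate to $\{S,S^*\}$, and the Weil-pairing isotropy condition forcing $ad-bc\equiv -1\pmod{\ell}$), with $i$ replaced by $\frac{1+i}{2}$ and Lemma~\ref{lemma:basis1} in place of Lemma~\ref{lemma:basis}.
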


	\begin{defn} For a  non-diagonal $(\ell, \ell)$-isogeny $\va$  from $ E_{0}\times  E_{0}$, we call $\{a,b,c,d\}$ given above the quadruple associated to $\va$. 
	\end{defn}
	
	We now describe the action of $G=\Aut(E_{0}\times E_{0})$ on the isogenies  explicitly. For the diagonal isogenies, we have
	
	\begin{prop} \label{diagker}  Suppose $\ker(\va)=K_1\times K_2$. Diagonal isogenies  can be divided into the following classes, which are unions of $G$-orbits: 
		\begin{enumerate}
			\item[$(\Da)$]  $(K_1, K_2)=(L_1, L_1)$ or $(L_2, L_2)$, where $G_\va= G$ and $O_\va=\{\va\}$.

			\item[$(\Db)$]  $(K_1, K_2)=(L_1, L_2)$ or $(L_2, L_1)$, which form $1$ orbit if  $\ell\equiv 1\bmod{3}$. 
			
			\item[$(\Dc)$] Exactly one of $K_1,K_2$ is in $\{L_1, L_2\}$, where $|G_\va|=12$ and $|O_\va|=6$.  This class contains $4(\ell-1)$ isogenies if $\ell\equiv 1\bmod{3}$.   
			
			\item[$(\Dd)$] $K_1\notin \{L_1, L_2\}$ and $K_2\in \{K_1, \frac{1+ i}{2}(K_1), \frac{1- i}{2}(K_1)\}$, where  $|G_\va|=8$ and $|O_\va|=9$. This class contains  $3(\ell-1)$ isogenies  if $\ell\equiv 1\bmod{3}$, and  $3(\ell+1)$ isogenies if $\ell\equiv 2\bmod{3}$.
			
			\item[$(\De)$] $K_1\notin \{L_1, L_2\}$ and $K_2\notin \{L_1, L_2, K_1, \frac{1+i}{2}(K_1), \frac{1- i}{2}(K_1)\}$, where $G_\va= \{\begin{psmallmatrix}	\pm 1 & 0\\ 0 &\pm 1			\end{psmallmatrix}\}$ and $|O_\va|=18$. This class contains  $(\ell-1)(\ell-4)$ isogenies  if $\ell\equiv 1\bmod{3}$, and  $\ell^2-\ell-2$ isogenies if $\ell\equiv 2\bmod{3}$.			
		\end{enumerate}
	\end{prop}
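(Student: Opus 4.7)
The plan is to mirror the proof of the analogous proposition for $E_{1728}\times E_{1728}$, the only new input being the larger unit group and its orbit structure on lines of $E_0[\ell]$. First I would write down how each $g\in G=\Aut(E_0\times E_0)$ from Lemma~\ref{lemma:o01} acts on a diagonal kernel $K_1\times K_2$: a diagonal element $\begin{psmallmatrix}u & 0 \\ 0 & v\end{psmallmatrix}$ sends $K_1\times K_2$ to $u(K_1)\times v(K_2)$, while an anti-diagonal element $\begin{psmallmatrix}0 & u \\ v & 0\end{psmallmatrix}$ sends it to $v(K_2)\times u(K_1)$, for any units $u,v\in\gO_0^\times$.

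Next I would analyse the orbit structure of $\gO_0^\times=\{\pm 1,\pm\zeta,\pm\zeta^2\}$, with $\zeta=\frac{1+i}{2}$ a primitive sixth root of unity, on the $\ell+1$ lines of $E_0[\ell]$. Since $-1$ acts trivially on the set of lines, the orbit of any line $K$ is $\{K,\zeta(K),\zeta^2(K)\}$, which has size $3$ unless $K$ is a $\zeta$-eigenline, i.e.\ $K\in\{L_1,L_2\}$; such eigenlines exist precisely when $\ell\equiv 1\bmod{3}$. From this, the five classes are manifestly disjoint and exhaustive: $(\Da)$ and $(\Db)$ use only eigenlines, $(\Dc)$ uses exactly one eigenline coordinate, $(\Dd)$ collects the pairs where $K_1$ and $K_2$ lie in the same non-trivial $\gO_0^\times$-orbit, and $(\De)$ is everything else.

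For each class the stabilizer and orbit size then drop out of the action formulas. In $(\Da)$ every $g\in G$ fixes $(L_i,L_i)$, giving $G_\va=G$ and $|O_\va|=1$. In $(\Db)$ all $36$ diagonal elements fix $(L_1,L_2)$ while a swap produces the unique other orbit element $(L_2,L_1)$. In $(\Dc)$ scaling the generic coordinate by $\zeta^j$ gives $3$ images and swap doubles this to $|O_\va|=6$, hence $|G_\va|=12$. In $(\Dd)$ the $9$ pairs $(\zeta^a K_1,\zeta^b K_1)$ are already exhausted by the diagonals and the swap is absorbed, so $|O_\va|=9$ and $|G_\va|=8$. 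In $(\De)$ the swap is not absorbed and contributes a disjoint copy, so $|O_\va|=18$ and $|G_\va|=4$. The cardinality of each class is then a direct count of lines in $\F_\ell^2$: when $\ell\equiv 1\bmod{3}$ there are $\ell-1$ non-eigen lines, giving $4(\ell-1)$, $3(\ell-1)$, $(\ell-1)(\ell-4)$ in $(\Dc),(\Dd),(\De)$ respectively, while for $\ell\equiv 2\bmod{3}$ all $\ell+1$ lines are generic, giving $3(\ell+1)$ and $(\ell+1)(\ell-2)=\ell^2-\ell-2$. Summing yields $(\ell+1)^2$ in agreement with Corollary~\ref{cor:isogenies}. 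The only subtle point, and the one spot where the argument visibly departs from the $E_{1728}$ case, is the verification in Class $(\Dd)$ that when $K_2=\zeta^j K_1$ exactly four anti-diagonal units pair with the four diagonal stabilizers to give $|G_\va|=8$; this reflects the enlargement of $\gO^\times$ from order $4$ to order $6$ and is the reason $(\Dd)$ has orbit size $9$ rather than the $4$ seen in the previous section.
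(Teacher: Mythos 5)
Your argument is correct and is exactly the argument the paper has in mind: the paper explicitly says results in this section ``are parallel to those in the previous section, whose proofs are almost identical and will be omitted,'' and what you have written is precisely that transfer — replace the order-$4$ unit group $\{\pm 1,\pm i\}$ by the order-$6$ group $\{\pm 1,\pm\zeta,\pm\zeta^2\}$ with $\zeta=\frac{1+i}{2}$, note that $-1$ acts trivially on lines so each non-eigenline has a $3$-element $\zeta$-orbit (and eigenlines $L_1,L_2$ exist iff $\ell\equiv 1\bmod 3$), and then the stabilizer/orbit sizes and class counts fall out. One cosmetic slip: the anti-diagonal matrix $\begin{psmallmatrix}0&u\\ v&0\end{psmallmatrix}$ sends $K_1\times K_2$ to $u(K_2)\times v(K_1)$, not $v(K_2)\times u(K_1)$ as you wrote; this does not affect any count since $u,v$ range symmetrically over $\gO_0^\times$. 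Your consistency check $\sum|\text{class}|=(\ell+1)^2$ in both congruence cases is a nice sanity check that the paper does not spell out.
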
	
	
	For the non-diagonal isogenies, we have

	\begin{prop}	\label{nondiagker1} Non-diagonal isogenies can be divided into the following classes, which are unions of $G$-orbits:
		\begin{enumerate} 
			\item[$(\Na)$] $\{a,b,c,d\}=\{a,b,-b,a+ b\}$:
			This class contains $\ell- 1$ isogenies if $\ell\equiv 1\bmod{3}$ and  $\ell+1$ isogenies if $\ell\equiv 2\bmod{3}$.
			
			\item[$(\Nb)$] $\{a,b,c,d\}=\{a,b, a+b, -a\}$, where $|G_\va|=12$  and $|O_\va|=6$. This class contains $\ell-1$ isogenies if $\ell\equiv 1\bmod{3}$ and  $\ell+1$ isogenies if $\ell\equiv 2\bmod{3}$.
			
			\item[$(\Nc)$] if $3d^2\ne -1$, $\{a,b,c,d\}=\{a,b,c,-a\}$, $c\neq a+ b$, or  $\{a,b,c,d\}=\{a,b,a+ b,d\}$, $d\neq -a$, or $\{a,b,c,d\}=\{a,b,c,b- c\}$, $d\neq -a$ where $|G_\va|=4$  and $|O_\va|=18$. This class contains $3(\ell^2-1)$ isogenies.

			\item[$(\Nd)$] all other cases, where $G_\va=\{\pm I_2\} $  and $|O_\va|=36$. This class contains $\ell^3- 3\ell^2- 3\ell+ 5$ isogenies if $\ell\equiv 1\bmod{3}$ and $\ell^3- 3\ell^2- 3\ell+ 1$ isogenies if $\ell\equiv 2\bmod{3}$.
		\end{enumerate}	
	\end{prop}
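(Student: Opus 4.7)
The plan is to follow the same strategy used in the proof of Proposition~\ref{nondiagker} for the $E_{1728}\times E_{1728}$ case, adapted to the larger automorphism group $G$ of order $72$ from Lemma~\ref{lemma:o01} and to the different module structure on $E_0[\ell]$. The three main steps are: (a) build an action table of $G$ on quadruples, (b) read off stabilizers and orbit sizes from this table, and (c) count the quadruples in each class.

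First I would set up the action of $G$ on quadruples. Using $S^* = \frac{1+i}{2}(S)$ and the relation $\bigl(\frac{1+i}{2}\bigr)^2 = \frac{1+i}{2} - 1$, one checks that $\frac{1+i}{2}$ acts on the $\F_\ell$-basis $\{S, S^*\}$ of $E_0[\ell]$ by the matrix $\begin{pmatrix} 0 & -1 \\ 1 & 1 \end{pmatrix}$; dually, $\frac{1-i}{2}$ acts by $\begin{pmatrix} 1 & 1 \\ -1 & 0 \end{pmatrix}$. With these rules, for every $g$ in \eqref{matrix_M_unitsE0} one computes the quadruple associated to $\pm\va g^{-1}$ from the one associated to $\va$, just as in the lemma preceding Proposition~\ref{nondiagker}. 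The three ``symmetric'' shapes appearing in the proposition arise naturally from this table: $\{a,b,-b,a+b\}$ is fixed (up to sign) by the diagonal elements $\begin{psmallmatrix}\frac{1\pm i}{2}&0\\ 0&\frac{1\pm i}{2}\end{psmallmatrix}$; $\{a,b,a+b,-a\}$ is fixed by antidiagonal elements involving $\frac{1\pm i}{2}$; and the third shape $\{a,b,c,b-c\}$ in Class $\Nc$ is the analogue using $\frac{1-i}{2}$.

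Next I would read $G_\va$ off the action table for a representative of each class, and verify the orbit sizes via orbit-stabilizer ($|G_\va|\cdot|O_\va|=72$). Finally I would count quadruples. For Class $\Na$, the normalization $ad-bc=-1$ becomes $a^2+ab+b^2=-1$ in $\F_\ell$; by the standard $\F_\ell$-analogue of Lemma~\ref{dio2} (the norm form of $\Z[\frac{1+i}{2}]$ is isotropic modulo $\ell$ exactly when $\ell\equiv 1\bmod 3$), this has $\ell-1$ solutions if $\ell\equiv 1\bmod 3$ and $\ell+1$ if $\ell\equiv 2\bmod 3$. For Class $\Nb$ the normalization gives $a^2+ab+b^2=1$, yielding the same counts. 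For Class $\Nc$ one enumerates the three subfamilies individually (similar to the inclusion--exclusion argument in the proof of Proposition~\ref{nondiagker}), removes overlaps with $\Na$, $\Nb$, the diagonal case $a=0$ issues, and pairwise intersections among the three subfamilies, to arrive at $3(\ell^2-1)$. Class $\Nd$ is then obtained by subtracting the three known counts from the total $\ell^3-\ell$ of non-diagonal isogenies given by Corollary~\ref{cor:isogenies}.

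The main obstacle is bookkeeping rather than conceptual: the $72$-element table is more than twice as large as the one behind Proposition~\ref{nondiagker}, and since $\frac{1+i}{2}$ acts non-diagonally on the basis $\{S,S^*\}$ (in contrast to $i$, which is diagonal in the $E_{1728}$ setup), the transformation rules for $(a,b,c,d)$ are genuinely more complicated. Consequently the third symmetric family in Class $\Nc$ is a new feature, and care is required in the inclusion--exclusion when counting $\Nc$ to avoid double-counting its overlaps with $\Na$ and $\Nb$. Once the action table is tabulated, the stabilizer identifications and the Diophantine counts are routine.
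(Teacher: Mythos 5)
Your proposal takes essentially the same route as the paper: the paper omits this proof, stating that the results of this section are parallel to the $E_{1728}$ case with almost identical arguments, and your plan --- tabulating the $G$-action on quadruples via the matrix of $\frac{1+i}{2}$ on $\{S,S^*\}$ (using $\bigl(\frac{1+i}{2}\bigr)^2=\frac{1+i}{2}-1$), reading off stabilizers with the orbit--stabilizer check $|G_\va|\cdot|O_\va|=72$, counting $\Na$ and $\Nb$ by the norm form $a^2+ab+b^2$ (isotropic over $\F_\ell$ exactly when $\ell\equiv 1\bmod 3$, giving $\ell-1$ resp.\ $\ell+1$ representations), handling $\Nc$ by inclusion--exclusion over the three subfamilies, and obtaining $\Nd$ by subtraction from the total $\ell^3-\ell$ of Corollary~\ref{cor:isogenies} --- is precisely that adaptation, and its consequences are consistent with the stated counts ($\ell^3-3\ell^2-3\ell+5$ resp.\ $\ell^3-3\ell^2-3\ell+1$). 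The only reservation is that it is an outline rather than an execution (the $72$-entry table and the $\Nc$ bookkeeping are not actually carried out), but the strategy and all key structural inputs are correct.
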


	\subsection{Loops at $[E_{0}\times E_{0}]$}	
	\begin{thm}\label{thmE17281}
		Suppose $p>3\ell$. 
		
		$(1)$ If  $\ell \equiv 1 \bmod 3$, then  the set of loops of $E_{0} \times E_{0}$ is the union of Classes $\Da, \Db$ and $\Na$.
		
		$(2)$  If $\ell \equiv 2 \bmod 3$, then  the set of loops of $E_{0} \times E_{0}$ is Class $\Na$.					
		
		$(3)$ If $\ell=3$, then $E_{0} \times E_{0}$ has $1$ loop.		
	\end{thm}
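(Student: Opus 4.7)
The argument parallels the proof of Theorem~\ref{thmE1728} closely, with $\Z[i]$ replaced by the imaginary quadratic order $\Z[\tfrac{1+i}{2}]\subset\gO_0$, the unit group $\{\pm 1,\pm i\}$ of $\Z[i]$ replaced by the six sixth-roots of unity $\{\pm 1,\pm\tfrac{1+i}{2},\pm\tfrac{1-i}{2}\}$ of $\Z[\tfrac{1+i}{2}]$, Lemma~\ref{dio1} replaced by Lemma~\ref{dio2}, and the norm formula \eqref{eq:nrd1} replaced by \eqref{eq:nrd2}. As before, loops at $[E_0\times E_0]$ correspond to $G$-orbits of matrices $M\in M_2(\gO_0)$ satisfying $M^+M=\ell I$, where $G=\Aut(E_0\times E_0)$ has order $72$ by Lemma~\ref{lemma:o01}.

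Writing $M=\begin{pmatrix} a & b\\ c & d\end{pmatrix}$, the equation $M^+M=\ell I$ gives $\Nrd(a)+\Nrd(b)=\Nrd(c)+\Nrd(d)=\ell$. The hypothesis $p>3\ell$ combined with the norm expression \eqref{eq:nrd2}, in which the contribution of the $j$- and $k$-components is $\tfrac{p(z^2+3zw+3w^2)}{3}$ and is at least $\tfrac{p}{3}>\ell$ when $(z,w)\ne(0,0)$, forces $a,b,c,d\in\Z[\tfrac{1+i}{2}]$. After absorbing a sixth-root of unity into $b$ and $c$ (the analog of the substitution $b\mapsto bi$ in the $\Z[i]$ case), I may assume $M^+M=\ell I=MM^+$ yields $a\bar a=d\bar d$, $b\bar b=c\bar c$, and $a\bar c=b\bar d$.

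If $ab=0$, then up to a $G$-element I may take $b=0$, forcing $c=0$ and $\Nrd(a)=\Nrd(d)=\ell$. In Eisenstein-like coordinates this is $x_1^2+x_1x_2+x_2^2=\ell$, solvable iff $\ell\equiv 1\pmod 3$; the four resulting diagonal loops $a,d\in\{\lambda,\bar\lambda\}$ are precisely Classes $\Da$ and $\Db$. If $ab\ne 0$, using that $\Z[\tfrac{1+i}{2}]$ is a PID I write $a=AB$, $d=A\bar B$ with $A=\gcd(a,d)$, and $b=CD$, $c=C\bar D$ with $C=\gcd(b,c)$. The relation $a\bar c=b\bar d$ then gives $A\bar C=\bar AC\in\Z$. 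Any non-unit common prime divisor of $A$ and $C$ would divide all four entries of $M$, hence would divide $\ell$, forcing $ab=0$, a contradiction. So $A$ and $C$ are rational integers up to units, and after absorbing those units into $G$ the matrix takes the form $M=\begin{pmatrix} a & b\zeta\\ \bar b\zeta & \bar a\end{pmatrix}$ with $\Nrd(a)+\Nrd(b)=\ell$ and $ab\ne 0$, for a fixed unit $\zeta$ of norm $1$. Lemma~\ref{dio2} counts the integer solutions to this representation with both coordinate pairs nonzero; dividing by the order of the $G$-stabilizer of matrices of this shape yields $\ell-1$ solutions when $\ell\equiv 1\pmod 3$ and $\ell+1$ when $\ell\equiv 2\pmod 3$, matching the size of Class $\Na$ in Proposition~\ref{nondiagker1}. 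A kernel computation identical to the one in Theorem~\ref{thmE1728} — evaluating $M$ on the basis $\{S,S^*\}$ of Lemma~\ref{lemma:basis1} and using Proposition~\ref{prop:diag11} — shows that the quadruple associated to each such loop takes the shape $\{a,b,-b,a+b\}$, placing it in Class $\Na$.

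Part (3) is handled by direct enumeration: for $\ell=3$ the prime $3$ is ramified in $\Z[\tfrac{1+i}{2}]$, the PID-factorization argument of Case (ii) degenerates, and a short check identifies a unique loop up to $G$. The main obstacle I expect is the bookkeeping in Case (ii): pinning down the correct unit $\zeta$ and the $G$-stabilizer of the normalized matrix shape inside the order-$72$ group so that the count from Lemma~\ref{dio2} matches the size of Class $\Na$, and verifying that the associated quadruples fall into $\Na$ rather than $\Nb$, $\Nc$, or $\Nd$.
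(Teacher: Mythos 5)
Your proposal is essentially the paper's intended argument: the paper omits the proof of this theorem precisely because it is the same adaptation of the proof of Theorem~\ref{thmE1728} that you carry out, with $\Z[i]$ replaced by $\Z[\tfrac{1+i}{2}]$, the unit group enlarged to the sixth roots of unity, Lemma~\ref{dio1} replaced by Lemma~\ref{dio2}, and the norm form \eqref{eq:nrd1} replaced by \eqref{eq:nrd2}. One small correction to your normalization in case (ii): since no unit of $\Z[\tfrac{1+i}{2}]$ has trace zero, the lower-left entry cannot be $\bar b\zeta$ (that would force $a\bar b(\zeta+\bar\zeta)=0$, i.e.\ $ab=0$) but must be $-\bar b\bar\zeta$, as in the form $\begin{psmallmatrix} a & b\frac{1+i}{2}\\ \bar b\frac{-1+i}{2} & \bar a\end{psmallmatrix}$ of Proposition~\ref{llc1}; with that fix the stabilizer count and the identification of the non-diagonal loops with Class $\Na$ go through exactly as you describe.
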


	\subsection{Neighbors of $[E_{0}\times E_{0}] $} 
	
	If $\ell\equiv 1\bmod{3}$,	we denote the loops  $\lambda\times \lambda$ and $\bar{\lambda}\times \bar{\lambda}$ on $E_{0}\times E_{0} $ by   $[\lambda]$ and $[\bar{\lambda}]$ respectively. Hence
	\[ \ker[\lambda]=L_1\times L_1,\quad \ker[\bar{\lambda}]=L_2\times L_2. \]
	
	\begin{lemma}\label{goodiso1} Suppose $\ell\equiv 1\bmod{3}$ and $p>3\ell$. 
		For a loop $\alpha\neq [\lambda]$, $[\bar{\lambda}]$ of  $E_{0}\times E_{0}$ of degree $\ell^2$, let
		\[ L_{i}^\al=(L_i\times L_i)\cap \ker\al\ \ (i=1, 2). \]  
		Then $\alpha\mapsto L_i^{\alpha}$ gives a one-to-one correspondence 
		\[ \{\text{degree $\ell^2$ loops}\ \neq[\lambda]\ \text{or}\ [\bar{\lambda}]\}\leftrightarrow \{\text{$1$-dimensional subspaces of } L_i\times L_i\}.\]
		Consequently, $\ker\al=L_1^\al\times L_2^\al$, and if $L_1^\al=\{(P, kP): P\in L_1\}$ for some $k\neq 0$, then $L_2^\al=\{(-kQ, Q): Q\in L_2\}$.   
	\end{lemma}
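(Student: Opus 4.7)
The proof strategy mirrors that of Lemma \ref{goodiso}, with $i$ replaced by $\omega := \frac{1+i}{2}$ and the quadruples taken from Class $\Na$ in Proposition \ref{nondiagker1}. I would first establish the bijection $\alpha \mapsto L_1^\alpha$; the $L_2^\alpha$ case is parallel by interchanging the roles of $L_1$ and $L_2$. By Theorem \ref{thmE17281}, the set of degree-$\ell^2$ loops of $E_0 \times E_0$ is exactly the union of Classes $\Da$ (namely $[\lambda]$ and $[\bar\lambda]$), $\Db$ (two diagonal loops with kernels $L_1\times L_2$ and $L_2\times L_1$) and $\Na$ ($\ell-1$ non-diagonal loops with quadruple $\{a, b, -b, a+b\}$ satisfying $a^2+ab+b^2 = -1$). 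After excluding $[\lambda]$ and $[\bar\lambda]$, there remain $\ell + 1$ loops, matching the number of one-dimensional subspaces of the two-dimensional space $L_1 \times L_1$.

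For the two $\Db$ loops, direct inspection gives the two ``axis'' subspaces $L_1 \times \{0\}$ and $\{0\} \times L_1$ as $L_1^\alpha$. For a non-diagonal loop $\alpha \in \Na$, I would work from the basis $\{(S, aS+bS^*), (S^*, -bS+(a+b)S^*)\}$ of $\ker\alpha$. Using the decompositions $S = R + R'$ and $S^* = t R + (1-t) R'$, where $t$ is the eigenvalue of $\omega$ on $L_1$ satisfying $t^2 - t + 1 = 0$, I would solve for the unique (up to scalar) combination whose first and second coordinates both lie in $L_1$, producing a generator of $L_1^\alpha$ of the form $(R, kR)$ with $k$ an explicit rational function of $(a,b)$. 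Reversing the computation: given $k \in \F_\ell^\times$, the constraints $(R, kR) \in \ker\alpha$ together with $a^2+ab+b^2 = -1$ should cut out a unique quadruple, which establishes injectivity and hence, together with the count above, the bijection.

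Once the bijection is in place, $\ker\alpha = L_1^\alpha \oplus L_2^\alpha$ follows because both summands have dimension one inside the two-dimensional $\ker\alpha$ and $(L_1 \times L_1) \cap (L_2 \times L_2) = 0$. For the final formula, write $L_2^\alpha = \la (Q, mQ) \ra$ with $0 \neq Q \in L_2$; the isotropy of $\ker\alpha$ under the Weil pairing gives
\[ e_\ell\bigl((P, kP), (Q, mQ)\bigr) = e_\ell(P,Q)\, e_\ell(kP, mQ) = e_\ell(P,Q)^{1+km} = 1, \]
and since $\{P,Q\}$ is an $\F_\ell$-basis of $E_0[\ell]$ we have $e_\ell(P,Q) \neq 1$, which forces $km = -1$; that is $L_2^\alpha = \la (-kQ, Q) \ra$. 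The main technical obstacle is the coordinate-change computation in the second step: the sixth-root-of-unity parameter $t$ (governed by $t^2 = t-1$) makes the algebra somewhat bulkier than the fourth-root case of Lemma \ref{goodiso}, and one must carefully verify that the map $(a,b) \mapsto k$ is indeed a bijection onto $\F_\ell^\times$ when restricted to the conic $a^2 + ab + b^2 = -1$.
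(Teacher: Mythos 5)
Your proposal is correct and takes essentially the same route as the paper, which proves this lemma simply by transporting the argument of Lemma~\ref{goodiso} (the details are omitted there as "almost identical"): your eigen-decomposition $S^*=tR+(1-t)R'$ is the right one, and the key algebra does close, since the slope is $k=a+bt$ and the factorization $a^2+ab+b^2=(a+bt)\bigl(a+b(1-t)\bigr)=-1$ forces the unique inverse quadruple via $a+b(1-t)=-k^{-1}$. The only minor variations are your cardinality check and the Weil-pairing derivation of $km=-1$, which replaces reading $L_2^\alpha$ directly off the same eigenvector computation as in the paper's proof of Lemma~\ref{goodiso}.
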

	
	\begin{prop}\label{llc1}
		Suppose $p>3\ell^2$. Every loop $\va$ of $E_{0}\times E_{0}$ of degree $\ell^4$ corresponds to a matrix of the following form:
		\begin{enumerate}[I]
			\item $\ell I_2$. In this case $\ker\va\cong (\Z/\ell\Z)^4$.  
			\item $\begin{pmatrix}
				a & b\frac{1+ i}{2} \\
				\bar{b}\frac{{-1+ i}}{2}  & \bar{a} 
			\end{pmatrix}$, where $a, b\in \Z[\frac{1+ i}{2}]$, $a\bar{a}+ b\bar{b}= \ell^2$ and $ab\neq 0$.   In this case $\ker\va\cong (\Z/\ell^2\Z)^2$.
			\item $\lambda \begin{pmatrix}
				a & b\frac{1+ i}{2}\\
				\bar{b}\frac{-1+ i}{2} & \bar{a}
			\end{pmatrix}$ or $ \bar{\lambda} \begin{pmatrix}
				a & b\frac{1+ i}{2}\\
				\bar{b}\frac{-1+ i}{2} & \bar{a}
			\end{pmatrix}$, where $a, b\in \Z[\frac{1+ i}{2}]$ and $a\bar{a}+ b\bar{b}= \ell$.  In this case $\ker\va\cong (\Z/\ell\Z)^2\times \Z/\ell^2\Z$ which occurs only if $\ell\equiv 1\bmod{3}$.
		\end{enumerate}
		
		As a consequence, every loop of degree $\ell^4$ can be factorized as the product of two loops of degree $\ell^2$. Moreover, loops in Case $(2)$ is uniquely factorized as the composition of two edges of degree $\ell^2$. 
	\end{prop}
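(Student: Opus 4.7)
The plan is to mirror the proof of Proposition~\ref{llc} almost verbatim, replacing $\Z[i]\subset \gO_{1728}$ throughout by the subring $\Z[\frac{1+i}{2}]\subset \gO_0$ (note that $\frac{1+i}{2}$ is a primitive $6$th root of unity since $i^2=-3$, so this ring is essentially the Eisenstein integers) and using the reduced-norm formula \eqref{eq:nrd2} together with Lemma~\ref{dio2} in place of Jacobi's four-square identity. Concretely, for a loop $\va$ of degree $\ell^4$ on $[E_0\times E_0]$, the corresponding matrix $M\in M_2(\gO_0)$ must satisfy $M^+M=\ell^2 I_2$. Writing $M=\begin{pmatrix}a&b\\c&d\end{pmatrix}$, this yields $\Nrd(a)+\Nrd(b)=\Nrd(c)+\Nrd(d)=\ell^2$, and under the hypothesis $p>3\ell^2$ the formula \eqref{eq:nrd2} forces the $\frac{j+k}{2}$-component of each entry to vanish (otherwise the term $p(z^2+3zw+3w^2)/3$ would already exceed $\ell^2$). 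Hence $a,b,c,d\in \Z[\tfrac{1+i}{2}]$.

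Next I would normalize $M$ using the $G$-action from Lemma~\ref{lemma:o01}. The commutativity of $\Z[\frac{1+i}{2}]$ together with the identity $(-1+i)(-1-i)/4=1$ shows that a matrix of the form $\begin{pmatrix}a & b\frac{1+i}{2} \\ \bar b\frac{-1+i}{2} & \bar a\end{pmatrix}$ automatically satisfies $M^+M=(\Nrd(a)+\Nrd(b))I_2$; multiplying the off-diagonal entries by units of $\Z[\tfrac{1+i}{2}]$ puts any $M$ with both off-diagonals nonzero into this shape. The cases are then split by whether the entries contain zeros: the purely diagonal case forces $M=\ell I_2$ up to a unit matrix (Case~I), or, when $\ell\equiv 1\bmod 3$, produces the scalar matrices $\lambda I_2$ and $\bar\lambda I_2$ as a nontrivial building block for Case~III. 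A GCD argument in the PID $\Z[\frac{1+i}{2}]$ (identical to the one in the proof of Proposition~\ref{llc}) eliminates any common Eisenstein prime divisor and yields Case~II.

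For the factorization claim, Case~I gives $\va=\hat\al\circ\al$ for any $(\ell,\ell)$-loop $\al$; Case~III expresses $\va$ as $\tau\circ[\lambda]$ or $\tau\circ[\bar\lambda]$ with $\tau$ a degree-$\ell^2$ loop, and Lemma~\ref{goodiso1} supplies the bookkeeping for $L_i^\tau$ exactly as Lemma~\ref{goodiso} did in the $E_{1728}$ setting. For Case~II uniqueness, the group $\ker\va\cong(\Z/\ell^2\Z)^2$ contains a unique subgroup isomorphic to $(\Z/\ell\Z)^2$, so the first degree-$\ell^2$ factor is forced and the second follows by composition.

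The main obstacle will be matching the counts. In the $E_{1728}$ case one uses Jacobi's formula to count $\sigma(\ell^2)-1$ non-diagonal loops and compares with the $(\ell+1)\ell$ compositions without backtracking of two degree-$\ell^2$ loops avoiding $[\lambda],[\bar\lambda]$. Here the analogue requires Lemma~\ref{dio2} to count solutions to $x^2+xy+y^2+z^2+zw+w^2=\ell^2$ with neither $(x,y)$ nor $(z,w)$ zero, then dividing by the size of the stabilizer of the normal form in $G$. The care needed is that $\Z[\tfrac{1+i}{2}]$ has six units (rather than four in $\Z[i]$), so the stabilizer of $\begin{pmatrix}a&b\frac{1+i}{2}\\ \bar b \frac{-1+i}{2} & \bar a\end{pmatrix}$ inside the $72$-element group $G$ must be computed anew; I expect it to be a subgroup of order $12$, producing exactly the right count to match the geometric enumeration. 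Verifying this numerical reconciliation is the only substantive step not purely mechanical from the $E_{1728}$ proof.
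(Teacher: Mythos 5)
Your proposal is essentially the paper's own proof: the paper omits the argument for this proposition precisely because it is the verbatim adaptation of the proof of Proposition~\ref{llc} that you describe (entries forced into $\Z[\frac{1+i}{2}]$ by \eqref{eq:nrd2} and $p>3\ell^2$, normalization and a GCD argument in this PID to reach Cases I--III, kernel identification via Lemma~\ref{goodiso1} and the good/bad extension dichotomy, unique factorization in Case II from the unique $(\Z/\ell\Z)^2$ inside $(\Z/\ell^2\Z)^2$, and the closing count against the $(\ell+1)\ell$ non-backtracking compositions). Your one deferred check also comes out as you expect: the subgroup of $G$ preserving the normal form is exactly the six matrices $\begin{psmallmatrix} u & 0\\ 0 & \bar{u}\end{psmallmatrix}$ together with the six matrices $\begin{psmallmatrix} 0 & u\\ -\bar{u} & 0\end{psmallmatrix}$ with $u$ a unit of $\Z[\frac{1+i}{2}]$, so it has order $12$, and combining Lemma~\ref{dio2} with this stabilizer the enumeration reconciles with the composition count in the same way (including the same bookkeeping for the split case $\ell\equiv 1\bmod 3$) as in the $E_{1728}$ argument.
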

	
	\begin{lemma}\label{llc21}
		Suppose  $p> 3\ell^2$ and $\ell \equiv 1 \pmod 3$.
		
		If  $\al$ and $\beta$ are two edges from $E_{0}\times E_{0}$ to a vertex $V\neq E_{0}\times E_{0}$ such that  the loop $\va=\hat{\al}\circ \beta$ has a factor of $[\lambda]$ or $[\bar{\lambda}]$. Then
		\begin{enumerate}[1]
			\item $\al$ and $\beta$ are not in the same $G$-orbit.
			\item If $\va=\tau\circ [\lambda]$, then $\hat{\va}=\hat{\beta}\circ \al= \hat{\tau}\circ [\bar{\lambda}]$.
			\item If $\va=\tau\circ  [\lambda]$, then
			\begin{equation} \label{eq:kerg111} \ker(\beta)\cap (L_1\times L_1)= L_1^\tau,\quad  \ker(\al)\cap (L_2\times L_2)= L_2^{\hat{\tau}}. \end{equation}
			Hence $\tau$, $\al$ (resp. $\tau$, $\beta$) and  $\va$ are uniquely determined by $\beta$ (resp. $\al$).	
		\end{enumerate}
		
		On the other hand, if an edge $\beta: E_{0}\times E_{0}\rightarrow V$ of degree $\ell^2$ satisfies
		\begin{equation} \label{eq:kerg11} \dim \ker(\beta)\cap (L_i\times L_i)=1, \end{equation}
		for $i=1$ (resp. $i=2$), then there exists $\al:  E_{0}\times E_{0}\rightarrow V$ such that $\va=\hat{\al}\circ \beta=\tau\circ [\lambda]$  (resp. $\va=\hat{\al}\circ \beta=\tau\circ [\bar{\lambda}]$)  for some loop $\tau$.
	\end{lemma}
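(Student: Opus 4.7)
The plan is to mirror the proof of Lemma \ref{llc2} essentially verbatim, substituting the structural inputs of Section 4 for those of Section 3: Proposition \ref{llc1} in place of Proposition \ref{llc}, Lemma \ref{goodiso1} in place of Lemma \ref{goodiso}, $\gO_0$ in place of $\gO_{1728}$, and the decomposition $E_0[\ell] = L_1 \oplus L_2$ (for $\ell \equiv 1 \bmod 3$) in place of the analogous decomposition for $E_{1728}[\ell]$. Since the combinatorics of the three possible positions of $L_2^\tau$ inside $L_2 \times L_2$ are identical, the same case analysis carries through.

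For part (1), if $\al = \beta g$ for some $g \in G = \Aut(E_0 \times E_0)$, then $\hat\al = g^{-1}\hat\beta$ and
\[
\hat\al \circ \beta = g^{-1} \circ \hat\beta \circ \beta = g^{-1}\circ [\ell] = \ell g^{-1},
\]
which in matrix form is $\ell$ times a unit and hence belongs to Case I of Proposition \ref{llc1}; this contradicts the hypothesis that $\va$ admits a factor of $[\lambda]$ or $[\bar\lambda]$, i.e.\ lies in Case III. For part (2), the loop $[\lambda]$ corresponds to the scalar matrix $\lambda I_2$, whose dual corresponds to $\bar\lambda I_2 = [\bar\lambda]$ because $\lambda\bar\lambda = \ell$; dualizing $\va = \tau \circ [\lambda]$ gives $\hat\va = [\bar\lambda] \circ \hat\tau$, and the commutation of the scalar loop with $\hat\tau$ (exactly as in the proof of Lemma \ref{llc2}) produces $\hat\va = \hat\tau \circ [\bar\lambda]$.

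For part (3), I would analyze the $\ell$-part of $\ker(\va)$. By Case III of Proposition \ref{llc1}, $\ker(\va) \cong (\Z/\ell\Z)^2 \times \Z/\ell^2\Z$. Since $\ker[\lambda] = L_1 \times L_1 \subseteq \ker(\va)$ and $L_2^\tau = (L_2\times L_2) \cap \ker(\tau) \subseteq \ker(\va)$, the $\ell$-torsion of $\ker(\va)$, which contains $\ker(\beta)$, is generated by $L_1 \times \{0\}$, $\{0\}\times L_1$, and $L_2^\tau$. I would then split into three cases according to whether $L_2^\tau$ equals $L_2\times \{0\}$, $\{0\}\times L_2$, or a graph-type subgroup $\la (R', aR')\ra$ with $a \neq 0$; in each case Lemma \ref{goodiso1} determines $L_1^\tau$, and a direct computation using the Weil-pairing constraint (as in the third case of the proof of Lemma \ref{llc2}) pins down a basis of $\ker(\beta)$ of the form $\la (U,0),(0,R)\ra$ in the first two cases and $\la (R, -a^{-1}R),(R', bR + aR')\ra$ in the third. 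In each configuration one reads off the two identities in \eqref{eq:kerg111}; uniqueness of $\tau$ from $\beta$ (and then of $\al$) follows from the bijection in Lemma \ref{goodiso1}.

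For the converse, given an edge $\beta$ satisfying \eqref{eq:kerg11} for $i=1$, set $L = \ker(\beta) \cap (L_1 \times L_1)$ and let $\tau$ be the unique loop of degree $\ell^2$ with $L_1^\tau = L$ furnished by Lemma \ref{goodiso1}; the analysis of the three cases above shows $\ker(\beta) \subseteq \ker(\tau \circ [\lambda])$, so $\tau \circ [\lambda]$ factors through $\beta$, producing the required $\al$. The main technical point that requires care is the Weil-pairing bookkeeping in the third case: one must verify that the relation forcing $c = -a^{-1}$ in $\ker(\beta) = \la (R, cR), (R', bR + aR')\ra$ transfers faithfully from the $\gO_{1728}$ setting to $\gO_0$, where the operator distinguishing $L_1$ and $L_2$ is $\tfrac{1+i}{2}$ (with trace $1$) rather than $i$ (with trace $0$). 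This amounts to checking that the eigenvalue parametrization $t, -t$ used in defining $L_1, L_2$ aligns with the quadruple parametrization of Proposition \ref{nondiag1}, which is immediate from the definitions of $S$ and $S^*$ in Section 4.
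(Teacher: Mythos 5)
Your proposal is correct and matches the paper's intent exactly: the paper omits this proof, stating that the results of this section have proofs "almost identical" to those of the previous section, and what you give is precisely the proof of Lemma~\ref{llc2} transposed to $E_0\times E_0$ (Proposition~\ref{llc1} for Proposition~\ref{llc}, Lemma~\ref{goodiso1} for Lemma~\ref{goodiso}, the operator $\tfrac{1+i}{2}$ for $i$), including the same three-case analysis of $L_2^\tau$ and the same converse argument. Your closing remark about the eigenvalue bookkeeping for $\tfrac{1+i}{2}$ (trace $1$ rather than $0$) is a legitimate point of care, and handling it via the definitions of $S$ and $S^*$ is exactly what the paper's parallel-proof convention presumes.
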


	\begin{lemma} \label{lemma:kerg1} Suppose $\ell\equiv 1\bmod{3}$ and $p>3\ell^2$. 
		\begin{enumerate}[1]
			\item The diagonal non-loop isogenies satisfying \eqref{eq:kerg11} are exactly those in Class $\Dc$, whose number is $4(\ell-1)$.
			\item The quadruples $\{a,b,c,d\}$ associated to  non-diagonal non-loop isogenies  satisfying \eqref{eq:kerg11} for $i=1$ belong to Class $\Nc$ and $\Nd$. 
			The number of isogenies satisfying \eqref{eq:kerg11}  for $i=1,2$ is $2(\ell-1)^2$, of which $12(\ell-1)$ are in Class $\Nc$ and $2(\ell-1)(\ell-7)$  in Class $\Nd$.
		\end{enumerate}
	\end{lemma}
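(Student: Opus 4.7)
The plan is to mimic the proof of Lemma~\ref{lemma:kerg} with the involution $i$ replaced by $\frac{1+i}{2}$ throughout. The key algebraic substitution is $t^2+1=0$ becomes $t^2-t+1=0$, equivalently $t(1-t)=1$ and $(1-t)^2=-t$, both of which will simplify the intermediate expressions. Part~(1) follows immediately from Proposition~\ref{diagker}: for a diagonal $\beta$ with $\ker\beta = K_1 \times K_2$, the intersection $\ker\beta \cap (L_i \times L_i) = (K_1 \cap L_i) \times (K_2 \cap L_i)$ is one-dimensional exactly when one of $K_1, K_2$ equals $L_i$. Among the non-loop diagonal classes $\Dc$, $\Dd$, $\De$, classes $\Dd$ and $\De$ have both $K_1, K_2 \notin \{L_1, L_2\}$ and so fail the condition, while $\Dc$ satisfies it, giving the claimed $4(\ell-1)$ isogenies.

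For Part~(2), the first step is to express $R$ in the basis $\{S, S^*\}$. Using the eigenvalues of $\frac{1+i}{2}$ on $L_1$ and $L_2$ (which are $t$ and $1-t$), one has $S = R + R'$ and $S^* = tR + (1-t)R'$, whence $R$ is proportional to $(1-t)S - S^*$. For a non-diagonal $\beta$ satisfying \eqref{eq:kerg11} for $i=1$, Lemma~\ref{lemma:diagonal} forces the one-dimensional intersection to be spanned by $(R, kR)$ for some $k \in \F_\ell^\times$. Expanding this element in the basis $\{(S, aS+bS^*), (S^*, cS+dS^*)\}$ of $\ker\beta$ and equating coefficients of $S$ and $S^*$ will yield $c = (1-t)(a-k)$ and $d = (1-t)b + k$; the isotropy condition $ad - bc = -1$ then collapses to the single relation $a = -k^{-1} - (1-t)b$. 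This gives a bijective parameterization by $(k, b) \in \F_\ell^\times \times \F_\ell$. The Class-$\Na$ loop condition $c=-b$, $d=a+b$ cuts out exactly one $b$ per $k$, leaving $(\ell-1)^2$ non-loops; by symmetry the same count holds for $i=2$, and the two sets are disjoint because simultaneous membership would force $\ker\beta$ to coincide with the kernel of a degree-$\ell^2$ loop by Lemma~\ref{goodiso1}, contradicting the non-loop hypothesis. This yields the total $2(\ell-1)^2$.

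To split between $\Nc$ and $\Nd$, I substitute the parameterization into the three sub-conditions defining $\Nc$, namely (i) $d=-a$, (ii) $c=a+b$, (iii) $d=b-c$. Each reduces, via $t(1-t)=1$ and $(1-t)^2=-t$, to a quadratic on $k$: $k^2=1$, $k^2=t^2$, and $k^2=-t$, respectively. Since $\ell\equiv 1\bmod 3$ and $\ell\geq 7$ force $6 \mid \ell-1$, the element $-t$ has order $3$ in $\F_\ell^\times$ and so lies in the subgroup of squares, guaranteeing that (iii) admits two solutions for $k$; the three solution sets are pairwise disjoint for $\ell > 3$ since $\{1, t^2, -t\}$ are distinct. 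In each sub-case $b$ remains free, and the loop relation removes exactly one $b$ per $k$, giving $2(\ell-1)$ non-loops per sub-case, hence $6(\ell-1)$ for $i=1$ and $12(\ell-1)$ in total. The remaining $2(\ell-1)^2 - 12(\ell-1) = 2(\ell-1)(\ell-7)$ quadruples fall in Class~$\Nd$.

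The main obstacle I anticipate is the appearance of the third sub-condition (iii) $d=b-c$, which has no counterpart in the $E_{1728}$ proof of Lemma~\ref{lemma:kerg}: it reflects the enlarged automorphism group $\Aut(E_0)/\{\pm 1\}$ of order $6$ versus $\Aut(E_{1728})/\{\pm 1\}$ of order $4$. Verifying that $k^2 = -t$ is always solvable (which hinges on $t$ having order $6$ in $\F_\ell^\times$, forcing $6\mid \ell-1$) and that the three quadratic conditions $k^2 \in \{1, t^2, -t\}$ are pairwise distinct for $\ell \geq 7$ constitutes the delicate arithmetic input absent from the $E_{1728}$ case.
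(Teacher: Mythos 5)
Your proposal is correct and takes essentially the same route as the paper: the paper omits this proof, declaring it parallel to Lemma~\ref{lemma:kerg}, and your argument is exactly that adaptation, with the correct eigenvalues $t$ and $1-t$ of $\frac{1+i}{2}$, the parameterization $a=-k^{-1}-(1-t)b$, $c=(1-t)(a-k)$, $d=(1-t)b+k$, and the reduction of the three $\Nc$-subconditions to $k^{2}\in\{1,\,t^{2},\,-t\}$, all of which check out and reproduce the counts $2(\ell-1)^{2}$, $12(\ell-1)$ and $2(\ell-1)(\ell-7)$. The point you flag as the only new arithmetic input---solvability of $k^{2}=-t$, since $-t=t^{4}$ is a square once $t$ has order $6$ (forced by $\ell\equiv 1\bmod 3$)---is indeed the sole feature absent from the $E_{1728}$ case, and your use of the eigenvalue $1-t$ on $L_{2}$ quietly corrects a typo in the paper's setup.
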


	\begin{thm} \label{thm:trans1} Suppose $p>3\ell^2$.  Let $V$ be a vertex adjacent to $E_{0}\times E_{0}$.
		\begin{enumerate}[1]
			\item If there exists one isogeny $\beta: E_{0}\times E_{0}\rightarrow V$ satisfying \eqref{eq:kerg11}, then all isogenies from $E_{0}\times E_{0}$ to $V$ satisfy \eqref{eq:kerg11} and they form  two $G$-orbits.  
			\item For all other cases, $(\ell,\ell)$-isogenies from  $E_{0}\times E_{0}$ to $V$ form a $G$-orbit. 
		\end{enumerate}
	\end{thm}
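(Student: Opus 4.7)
The plan is to follow, step by step, the proof of Theorem~\ref{thm:trans} in the $E_{1728}$ case, with the parallel technical tools (Proposition~\ref{llc1}, Lemma~\ref{llc21}, Lemma~\ref{lemma:kerg1}) in place of their $E_{1728}$ analogues. The underlying reason the same argument works is that, when $\ell\equiv 1\pmod 3$, the operator $\tfrac{1+i}{2}\in\gO_0$ now plays the role that $i\in\gO_{1728}$ played in \S 3, splitting $E_0[\ell]$ into the two invariant lines $L_1,L_2$; the arithmetic in $\Z[\tfrac{1+i}{2}]$ together with Lemma~\ref{dio2} replaces that of $\Z[i]$ and Lemma~\ref{dio1} in all counting arguments.

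Concretely, given two edges $\al,\be:E_0\times E_0\to V$, I would form the loop $\va=\hat{\al}\circ\be$ of degree $\ell^4$ and classify it by Proposition~\ref{llc1}. Case~II is excluded because any loop in that case has a \emph{unique} factorization as the composition of two degree-$\ell^2$ edges, and that unique factorization is necessarily non-backtracking; but $\va=\hat{\al}\circ\be$ is a factorization with $\be\circ\hat{\al}$ being a candidate second factorization (or more importantly, reverses to $\hat{\va}=\hat{\be}\circ\al$), and unpacking this as in the proof of Theorem~\ref{thm:trans} rules Case~II out. If $\va$ is Case~I, then $\va=\ell g$ for some $g\in G$, which forces $\al=\be g^{-1}$, so $\al$ and $\be$ lie in the same $G$-orbit. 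If $\va$ is Case~III, then Lemma~\ref{llc21}(3) tells us $\be$ satisfies \eqref{eq:kerg11}, and Lemma~\ref{llc21}(1) tells us $\al$ and $\be$ lie in different $G$-orbits; moreover, for any $g_1,g_2\in G$ the composition $\widehat{\al g_1}\circ(\be g_2)$ is again Case~III, so every edge in $\al G\cup\be G$ satisfies \eqref{eq:kerg11}.

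To finish part~(1) I would rule out a third $G$-orbit: if $\al'\notin\al G\cup\be G$ were another edge to $V$, both $\va=\hat{\al}\circ\be$ and $\va'=\hat{\al'}\circ\be$ would be Case~III loops factoring through $[\lambda]$ or $[\bar\lambda]$, but Lemma~\ref{llc21}(3) asserts that the underlying $\tau$, and hence $\al$ itself, is uniquely determined by $\be$, contradicting $\al'\notin\al G$. Part~(2) is then immediate: if no isogeny to $V$ satisfies \eqref{eq:kerg11}, then $\va$ can be neither Case~II nor Case~III, leaving only Case~I, which puts $\al,\be$ in the same orbit. The main (very modest) obstacle is verifying that Lemma~\ref{llc21}, and in particular its uniqueness clause (3), transfers cleanly from the $\Z[i]$-setting to the $\Z[\tfrac{1+i}{2}]$-setting; once that parallel is in place the combinatorial argument above is formal and identical to the $E_{1728}$ case.
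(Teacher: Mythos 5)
Your proposal is correct and is essentially the paper's own argument: the paper omits the proof of Theorem~\ref{thm:trans1}, asserting it is identical to that of Theorem~\ref{thm:trans} with Proposition~\ref{llc1}, Lemma~\ref{llc21} and Lemma~\ref{lemma:kerg1} replacing their \S 3 analogues, which is exactly the transfer you carry out (Case I gives $\al=\be g^{-1}$, Case III gives condition \eqref{eq:kerg11} and two orbits, and the uniqueness clause of Lemma~\ref{llc21}(3) rules out a third orbit). One small imprecision worth fixing: Case II of Proposition~\ref{llc1} is excluded not because of a competing ``second factorization'' $\be\circ\hat{\al}$, but because the unique degree-$\ell^2$ factorization of a Case II loop consists of two \emph{loops} at $E_0\times E_0$, so if $\hat{\al}\circ\be$ were Case II then $\be$ itself would be a loop, contradicting $V\neq E_0\times E_0$.
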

	
	\begin{thm}\label{main1}
		Suppose $p>3\ell^2$. Consider the neighbor of $[E_0\times E_0]$.
		
		$(1)$ If  $\ell \equiv 1 \bmod 3$, the neighbor is given by the following   table:
		
		\begin{center}
			\begin{tabular}{|c|c|c||c|c|c|}
				\hline
				\#{Vertices} & Multi-Edges & Edge Type	& \#{Vertices} & Multi-Edges & Edge Type	  \\
				\hline
				$\frac{\ell-1}{3}$ & 12 &$\Dc$	 & $\frac{(\ell-1)(\ell-3)}{6}$ & 18 & $\Nc$-1\\
				\hline
				$\frac{\ell-1}{3}$ & 9 &$\Dd$	  & $\frac{\ell-1}{3}$ & 36 & $\Nc$-2 \\ 
				\hline
				$\frac{(\ell-1)(\ell-4)}{18}$ & 18 &$\De$	 & $\frac{(\ell-1)(\ell^2-4\ell+9)}{36}$ & 36 & $\Nd$-1 \\
				\hline
				$\frac{\ell-1}{6}$ & 6 & $\Nb$	 & $\frac{(\ell-1)(\ell-7)}{36}$ & 72  & $\Nd$-2  \\
				\hline
			\end{tabular}
		\end{center}

		$(2)$ If $\ell \equiv 2 \bmod 3$, the neighbor is given by the following   table:
		
		\begin{center}
			\begin{tabular}{|c|c|c||c|c|c|}
				\hline
				\#{Vertices} & Multi-Edges &  Edge Type & \#{Vertices} & Multi-Edges & Edge Type	\\
				\hline
				$\frac{\ell+1}{3}$ & 9 &$\Dd$	 & $\frac{\ell+1}{6}$ & 6 & $\Nb$\\
				\hline
				$\frac{\ell^2- \ell- 2}{18}$ & 18 & $\De$	 &  $\frac{\ell^2-1}{6}$  &  18 & $\Nc$ \\
				\hline
				&  &   &  $\frac{\ell^3- 3\ell^2- 3\ell+ 1}{36}$  & 36 & $\Nd$ \\
				\hline
			\end{tabular}
		\end{center}

		$(3)$ if $\ell=2$, then there is one vertex adjacent
		to $[E_{0}\times E_{0}]$ with diagonal kernel, and each connecting $[E_{0}\times E_{0}]$ with $9$ edges. There is one vertex adjacent
		to $[E_{0}\times E_{0}]$ with nondiagonal kernel, and each connecting $[E_{0}\times E_{0}]$ with $3$ edges, other three isogenies are loops.
		
		$(4)$ if $\ell=3$, then there are  two vertices adjacent
		to $[E_{0}\times E_{0}]$ with diagonal kernel, and connecting $[E_{0}\times E_{0}]$ with $6$ and $9$ edges. There are  three vertices adjacent
		to $[E_{0}\times E_{0}]$ with nondiagonal kernel, and each connecting $[E_{0}\times E_{0}]$ with $8$ edges, the last isogeny is  a loop.	
	\end{thm}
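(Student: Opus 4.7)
The plan is to mirror the argument for $[E_{1728}\times E_{1728}]$ in the analogous earlier theorem, combining the $G$-orbit classification of the outgoing $(\ell,\ell)$-isogenies (Propositions~\ref{diagker} and~\ref{nondiagker1}) with the transitivity/doubling dichotomy of Theorem~\ref{thm:trans1}. For each class the relevant inputs are its total size and the orbit length $|O_\va|$; dividing gives the number of $G$-orbits inside the class. After discarding the loops identified in Theorem~\ref{thmE17281} (Classes $\Da,\Db,\Na$ when $\ell\equiv 1\bmod 3$ and Class $\Na$ when $\ell\equiv 2\bmod 3$), the remaining orbits need to be distributed over the adjacent vertices.

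For $\ell\equiv 1\bmod 3$, Theorem~\ref{thm:trans1} says that an adjacent vertex $V$ is hit by either one $G$-orbit (generic) or by exactly two $G$-orbits (when the incoming edges satisfy the intersection condition~\eqref{eq:kerg11}). Lemma~\ref{lemma:kerg1} identifies the special orbits that pair up: every non-loop isogeny in Class $\Dc$, the $12(\ell-1)$ distinguished isogenies in Class $\Nc$, and the $2(\ell-1)(\ell-7)$ distinguished ones in Class $\Nd$. Each such pair of orbits coalesces to one vertex of multi-edge count $2|O_\va|$, producing the rows labelled $\Dc$, $\Nc$-$2$, $\Nd$-$2$. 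The remaining orbits of $\Nc$ and $\Nd$ inject into distinct vertices and give rows $\Nc$-$1$ and $\Nd$-$1$; Classes $\Dd$, $\De$, $\Nb$ contribute one vertex per orbit in the generic way. A sanity check that the edge counts add up to the total $(\ell+1)(\ell^2+1)$ is routine.

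For $\ell\equiv 2\bmod 3$, no element of $\gO_0$ has reduced norm $\ell$, so Lemma~\ref{goodiso1} and the doubling regime are vacuous; every non-loop $G$-orbit corresponds to exactly one vertex with $|O_\va|$ multi-edges, and the table reduces to dividing each class size by $|O_\va|$. The small primes $\ell=2$ and $\ell=3$ are handled by direct enumeration of the $(\ell+1)(\ell^2+1)$ maximal $\ell$-isotropic subgroups of $(E_0\times E_0)[\ell]$, separating loops from non-loop edges and then grouping the latter under the $G$-action.

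The main obstacle is combinatorial bookkeeping rather than theory: correctly identifying which quadruples in Classes $\Nc$ and $\Nd$ satisfy~\eqref{eq:kerg11} for $i=1$, for $i=2$, or for both, and verifying that the resulting counts are consistent with the class totals stated in Proposition~\ref{nondiagker1}. The new feature relative to the $E_{1728}$ case is that $G=\Aut(E_0\times E_0)$ has order $72$ rather than $32$, so the orbit lengths change (e.g.\ $|O_\va|=9,18,36$ in place of $4,8,16$), and Class $\Dd$ involves the three coincidences $K_2\in\{K_1,\frac{1+i}{2}(K_1),\frac{1-i}{2}(K_1)\}$ rather than two. Once these parameters are in place the proof is mechanical.
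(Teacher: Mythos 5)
Your proposal follows exactly the route the paper intends: the paper omits this proof as "parallel" to the $E_{1728}$ case, where the result is deduced precisely from the orbit classification (Propositions~\ref{diagker}, \ref{nondiagker1}), the pairing criterion of Lemma~\ref{lemma:kerg1}, and the one-or-two-orbit dichotomy of Theorem~\ref{thm:trans1}, with direct enumeration for the small primes. Your class-size/orbit-length bookkeeping (orbit lengths $6,9,18,36$, the $\Dc$, $\Nc$-2, $\Nd$-2 rows arising from paired orbits, and the vacuity of the doubling regime when $\ell\equiv 2\bmod 3$) reproduces the tables correctly, so the proposal is correct and essentially identical in approach.
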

	
	\section{Loops and neighbors of $[E\times E']$ for $j(E)\in \F_p\backslash \{0, 1728\}$}
	In this section, we assume  $E$ has $j$-invariant in $\F_p\backslash \{0, 1728\}$ and $j(E')\notin\{0,1728,j(E)\}$. Let $\pi$ be the Frobenius map of $E$. We know that $\End(E)$ has the form (see for example \cite{LOX201}): 
	
	\begin{enumerate} 
		\item  If $\frac{1+\pi}{2} \notin {\End}(E)$, then there exists a prime $q$ satisfying $(\frac{p}{q})=-1$ and  $q \equiv 3 \pmod 8$, such that
		\[{\End}(E)=\gO(q):=\Z+\Z \frac{1+i}{2}+\Z \frac{j-k}{2}+\Z \frac{ri-k}{q}, \]
		where   $r^2 \equiv -p \pmod q$,  $i^2=-q$,  $j^2=-p$,  $ij=-ji=k$.
		The reduced norm of $\gO(q)$ is given by
		\begin{equation} \begin{split}
				&\Nrd\left(x+y\frac{1+ i}{2}+z \frac{j- k}{2}+ w\frac{ri- k}{q}\right)\\ &= \left(x+\frac{y}{2}\right)^2+ q\left(\frac{y}{2}+\frac{rw}{q}\right)^2+ \frac{pz^2}{4}+ pq\left(\frac{z}{2}+ \frac{w}{q}\right)^2. 	
			\end{split}
		\end{equation} 
		
		\item If $\frac{1+\pi}{2} \in {\End}(E)$, then there exists a prime $q$ satisfying $(\frac{p}{q})=-1$ and  $q \equiv 3 \pmod 8$, such that	 
		\[ {\End}(E)=\gO'(q):=\Z+\Z \frac{1+j}{2}+\Z i+\Z \frac{r'i-k}{2q}, \]
		where  ${r'}^2 \equiv -p \pmod {4q}$,  $i^2=-q$,  $j^2=-p$,  $ij=-ji=k$. The reduced norm on $\gO(q)$ is given by
		\begin{equation} \begin{split}
				&\Nrd\left(x+yi+z \frac{1+ j}{2}+ w\frac{r'i- k}{2q}\right)\\ &= \left(x+\frac{z}{2}\right)^2+ q\left(y+\frac{r'w}{2q}\right)^2+ \frac{p(qz^2+ w^2)}{4q}.	
			\end{split}
		\end{equation} 
	\end{enumerate}

	\subsection{Kernels of $(\ell,\ell)$-isogenies from  $E\times E'$}
	
	Note that $E, E'$ are not isomorphic, we have
	\[ G= \Aut(E\times E')=\{1, [-1]\times 1, 1\times [-1], [-1]\times [-1]\}\cong \Z/2\Z\times \Z/2\Z. \]
	Consequently, we have
	\begin{prop}\label{pee}  Let $\va$ be  an $(\ell, \ell)$ isogeny  from $E\times E'$. 
		\begin{enumerate}[1]
			\item If $\ker\va$ is diagonal (type $(\DD)$), then $G_\va= G$ and $O_\va=\{\va\}$. 
			
			\item If $\ker\va$ is nondiagonal (type $(\NNN)$), then $G_\va= \{1, [-1] \times 1\}$ and $|O_\va|= 2$. 	 		
		\end{enumerate}
	\end{prop}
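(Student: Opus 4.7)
The plan is to determine $G_\va$ by checking directly, for each of the four elements of $G$, whether it stabilizes $\ker(\va)$; since edges in $\gG_p$ are parametrized by their kernels, $g\in G_\va$ is equivalent to $g(\ker(\va))=\ker(\va)$ (note every $g\in G$ is an involution, so $g^{-1}=g$). The orbit sizes then follow from the orbit-stabilizer formula $|O_\va|=|G|/|G_\va|$.

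For part (1), if $\ker(\va)=K_1\times K_2$ is diagonal with $K_i\subseteq E_i[\ell]$, then for every $g=(\epsilon_1,\epsilon_2)\in G$ with $\epsilon_i\in\{\pm 1\}$ we have $g(K_1\times K_2)=\epsilon_1 K_1\times \epsilon_2 K_2=K_1\times K_2$, since each $K_i$ is a subgroup and hence stable under negation. Therefore $G_\va=G$ and $|O_\va|=1$, as claimed.

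For part (2), assume $\va$ is nondiagonal. The element $[-1]\times[-1]$ acts on $\ker(\va)$ as multiplication by $-1$, which preserves every subgroup, so it lies in $G_\va$. It remains to rule out the two other nonidentity elements. Suppose $[-1]\times 1\in G_\va$: then for every $(P,Q)\in\ker(\va)$ we would have $(-P,Q)\in\ker(\va)$, and subtracting gives $(2P,0)\in\ker(\va)$. Since $\ell$ is odd, $2$ is invertible modulo $\ell$, so $(P,0)\in\ker(\va)$ for every first coordinate $P$ appearing in $\ker(\va)$; this contradicts Lemma~\ref{lemma:diagonal}, which characterizes nondiagonal isogenies as precisely those whose kernels contain no nonzero element of the form $(P,0)$. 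The element $1\times[-1]$ is ruled out by the symmetric argument. Consequently $|G_\va|=2$ and $|O_\va|=|G|/|G_\va|=2$.

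The proof is short and presents no real obstacle; the only substantive input is Lemma~\ref{lemma:diagonal}, which is what converts the consequence ``$(2P,0)\in\ker(\va)$'' into a contradiction with the nondiagonality hypothesis. The oddness of $\ell$ (implicit in the running assumption, since $[-1]$ acts trivially on $2$-torsion and would make the entire analysis degenerate at $\ell=2$) is used precisely at the step where one passes from $(2P,0)\in\ker(\va)$ to $(P,0)\in\ker(\va)$.
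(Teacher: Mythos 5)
Your argument is correct and is essentially the paper's: the paper offers no proof beyond the computation of $G=\Aut(E\times E')\cong(\Z/2\Z)^2$ (the proposition is introduced with ``Consequently''), and the intended content is exactly the kernel-stabilizer computation you carry out, with the orbit sizes following from orbit--stabilizer; your identification of $G_\va$ with $\{g\in G: g(\ker\va)=\ker\va\}$ is the reading consistent with how the paper uses these stabilizers throughout (e.g.\ in Proposition~\ref{diagker}, where all of $G$ stabilizes $\va$ with $\ker\va=L_1\times L_1$ even though $\va g\neq\va$ as maps). One point worth flagging: in case (2) your computation yields $G_\va=\{1,[-1]\times[-1]\}$, whereas the statement prints $G_\va=\{1,[-1]\times 1\}$. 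Your version is the correct one --- indeed your own argument shows $[-1]\times 1$ cannot stabilize a nondiagonal kernel, since it would force a nonzero $(P,0)$ or $(0,Q)$ into $\ker\va$, and the analogous Proposition~\ref{peee} for $E\times E$ lists $[-1]\times[-1]$ in the stabilizers of the nondiagonal classes $(\NNN2)$, $(\NNN3)$ --- so the printed stabilizer in the statement is evidently a typo; in any case $|G_\va|=2$ and $|O_\va|=2$, which is all that is used later, are unaffected. Your remark that $\ell$ must be odd (so that $2$ is invertible on $\ell$-torsion) is also a genuine hypothesis here, tacit in the paper: for $\ell=2$ the whole group $G$ acts trivially on $(E\times E')[2]$ and part (2) would fail.
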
	
	
	\subsection{Loops at $[E\times E']$}

	\begin{thm}\label{thmee} 
		For $d\in\Z_+$, let $\Iso_d(E,E'):=\{\sigma: E\rightarrow E'\mid \deg(\sigma)=d\}$. 
		Suppose either $\End(E)= \gO(q)$ and $p> q\ell^2> 4\ell^4$ or  $\End(E)= \gO'(q)$ and $p> 4q\ell^2> 4\ell^4$.

		\begin{enumerate}[1]
			\item  If there exists $d$ such that $\ell-d=\square>0$ (where $\square$ denotes a square of an integer) and $\Iso_d(E,E')\neq \emptyset$, then there are exactly two loops of $E\times E'$, whose kernels are nondiagonal.  
			
			\item If there is an isogeny from $E$ to $E'$ of degree $\ell$, then there is only one loop of $E\times E'$, whose kernel is diagonal.  
			
			\item  If  $\Iso_d(E,E')=\emptyset$ for all $d$ such that $\ell-d=\square$, then there is no loop of $E\times E'$.
		\end{enumerate} 
	\end{thm}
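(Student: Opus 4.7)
The approach follows the matrix-equation method used in Theorem~\ref{thmE1728}. A loop at $[E\times E']$ corresponds, modulo the left $G$-action with $G=\Aut(E\times E')=\{\pm 1\}\times\{\pm 1\}$, to a matrix
\[ M = \begin{pmatrix} a & b \\ c & d \end{pmatrix} \in \begin{pmatrix} \End(E) & \Hom(E', E) \\ \Hom(E, E') & \End(E') \end{pmatrix}, \quad M^+ M = \ell I, \]
i.e.\ $\Nrd(a)+\deg(c)=\ell$, $\Nrd(d)+\deg(b)=\ell$, and $\bar a b+\bar c d=0$.

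The first step is to use the size hypothesis ($p>q\ell^2$ or $p>4q\ell^2$) together with the reduced-norm formulas for $\gO(q)$ and $\gO'(q)$: the $j$- and $k$-coefficients of any element of reduced norm at most $\ell$ must vanish, exactly as $\End(E_{1728})$ is reduced to $\Z[i]$ in the proof of Theorem~\ref{thmE1728}. Hence $a\in R:=\Z[\tfrac{1+i}{2}]\subset\End(E)$ and $d$ lies in the analogous subring of $\End(E')$; parallel bounds restrict $b$ and $c$ to controlled sublattices of $\Hom(E', E)$ and $\Hom(E, E')$.

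I then split on $d_0:=\deg(c)$. If $d_0=\ell$, then $a=d=0$ and $M=\bigl(\begin{smallmatrix} 0 & \hat\sigma \\ \sigma & 0 \end{smallmatrix}\bigr)$ for some $\sigma\in\Iso_\ell(E, E')$; the kernel $\ker(\sigma)\times\ker(\hat\sigma)$ is diagonal. Under the size hypothesis, $\hat\sigma_1\sigma_2\in R$ has reduced norm $\ell^2$ and so must equal $\pm\ell$, forcing $\sigma_2=\pm\sigma_1$; hence case~(2) produces a unique loop. If $0<d_0<\ell$, then $a\in R$ with $\Nrd(a)=\ell-d_0$; the integrality of $b=-\bar a^{-1}\bar c d$ in $\Hom(E', E)$ together with the integrality of $d$ inside the quadratic subring of $\End(E')$ is used to force $a$ to be a rational integer, so $\ell-d_0=a^2$ is a positive square and we land in the setting of case~(1). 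A $G$-orbit count, using the uniqueness up to sign of the minimal-degree isogeny in $\Iso_{d_0}(E, E')$, then yields exactly two loops, both with nondiagonal kernels. If no $d$ satisfies either condition, no such $M$ exists and we are in case~(3).

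The main obstacle is the integrality step forcing $a\in\Z$ rather than a generic element of $R$ of norm $\ell-d_0$. The argument will combine the right $R$-module structure of $\Hom(E, E')$, the uniqueness (up to sign) of the minimal-degree isogeny under $p>4\ell^4$, and a careful integrality analysis of the coupling equation $\bar a b + \bar c d = 0$. Once this step is in hand, the enumeration of the two loops in case~(1) and the single loop in case~(2) follows by a direct $G$-orbit computation.
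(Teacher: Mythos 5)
Your overall strategy (encode a loop as a matrix $M$ with $M^+M=\ell I$ and use the size hypothesis to control the entries) is the same as the paper's, but your write-up has a genuine gap at exactly the step you flag as ``the main obstacle.'' You only reduce the diagonal entries $a\in\End(E)$, $d\in\End(E')$ to the quadratic subring $\Z[\tfrac{1+i}{2}]$ (killing the $j$- and $k$-coefficients via $p$ large), and then defer the claim that $a$ is a \emph{rational integer} to an unspecified future ``integrality analysis'' of the coupling equation $\bar a b+\bar c d=0$ using the $R$-module structure of $\Hom(E,E')$. That analysis is never carried out, so the proposal does not actually prove that $\ell-\deg(c)$ is a square, which is the heart of all three cases. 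In fact no such analysis is needed: the hypothesis $q\ell^2>4\ell^4$ gives $q>4\ell^2$, so in the reduced-norm formulas for $\gO(q)$ and $\gO'(q)$ the term $q(\cdot)^2$ already forces the $i$-coefficient to vanish for any element of reduced norm at most $\ell^2$; this is precisely how the paper argues (``loops of $E$ of degree $\le\ell$ are all inside $\Z$''), after which $\va_1=[a]$, $\va_4=[a]$, $\va_2=\widehat{\va_3}$ and $a^2+\deg\va_3=\ell$ follow from the kernel comparison $E[a]\subseteq\ker(\widehat{\va_4}\circ\va_3)$.

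A second, related gap is in your count of ``exactly two loops'' in case (1). Uniqueness up to sign of an isogeny of a \emph{fixed} degree $d_0$ follows from the $\hat\sigma_1\sigma_2\in\Z$ trick you use in case (2), but you must also exclude the possibility of isogenies of two \emph{different} degrees $d\neq d'$ with both $\ell-d$ and $\ell-d'$ positive squares (each would contribute its own pair of loops). The paper handles this by composing the two isogenies: $\widehat{\va'}\circ\va$ is an endomorphism of degree $dd'\le\ell^2$, hence a rational integer $[x]$ by the size hypothesis; comparing degrees gives $d/t_1^2=d'/t_2^2=\delta$, and the two representations $\ell=t_1^2\delta+a^2=t_2^2\delta+b^2$ of the prime $\ell$ by the form $x^2+\delta y^2$ are essentially unique (unique factorization of principal primes in $\Q(\sqrt{-\delta})$), forcing $d=d'$ and $\va=\pm\va'$. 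Your appeal to ``uniqueness of the minimal-degree isogeny'' does not address this cross-degree issue, so as written the claim ``exactly two'' (and, in case (2), that no additional nondiagonal loops coexist with the diagonal one) is not established.
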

	
	\begin{proof} As seen in \cite[\S 2]{MM}, a loop of $E\times E'$ has the form: 
		\[\begin{pmatrix}
			\va_1 & -\va_2 \\
			\va_3 & \va_4
		\end{pmatrix}\] 
		where $\va_1:E\rw E$, $\va_2: E' \rw E$, $\va_3: E \rw E'$, $\va_4: E' \rw E'$, and \[\begin{pmatrix}
			\widehat{\va_1} & \widehat{\va_3}  \\
			-\widehat{\va_2}& \widehat{\va_4}
		\end{pmatrix}\begin{pmatrix}
			\va_1 & -\va_2 \\
			\va_3 & \va_4
		\end{pmatrix} = \begin{pmatrix}
			\ell & 0\\
			0 & \ell
		\end{pmatrix}.\] 
		By computation, we have
		\begin{equation} \label{eq:va1234} \begin{split}
				&\deg(\va_1)= \deg(\va_4),\quad \deg(\va_2)= \deg(\va_3),\\ &\deg(\va_1)+ \deg(\va_2)= \ell,\quad \widehat{\va_2}\circ \va_1 = \widehat{\va_4}\circ \va_3.
		\end{split} \end{equation} 
		
		We now show the case $\End(E)=\gO(q)$, the case  $\End(E)=\gO'(q)$ follows by the same argument. 
		
		Under the assumption $p> q\ell^2> 4\ell^4$, loops of $E$ of degree  $\leq \ell$ are all inside $\Z$. This means $\va_1= [a]$ for some $a\in \Z$. By \eqref{eq:va1234}, we have $\gcd(\deg(\va_1), \deg(\va_2))=1$, and  $E[a]\su \ker(\widehat{\va_2}\circ\va_1)$. Thus $E[a]\su \ker(\widehat{\va_4}\circ\va_3)$, which implies that $\ker(\widehat{\va_4})\cong \left(\Z/a\Z \right)^2$. Hence $\va_4=[a]$, $\va_2= \widehat{\va_3}$ and $a^2+ \deg(\va_2)= \ell$. Thus if $\begin{pmatrix}
			\va_1 & -\va_2 \\
			\va_3 & \va_4
		\end{pmatrix}$ is a loop of $E\times E'$, $\va_2: E \rw E'$ must be an isogeny such that $\ell-\deg(\va_2)=\square$. This gives the proof of (3).
		
		(1) If there is an isogeny $\va$ from $E$ to $E'$ of degree $d<\ell$ such that  $\ell-d=a^2$, then there are two loops of $E\times E'$: 	 
		\[\begin{pmatrix}
			a & -\widehat{\va} \\
			\va & a
		\end{pmatrix}, \quad \begin{pmatrix}
			a & \widehat{\va} \\
			-\va & a
		\end{pmatrix}. \]
		The kernels of these two loops are: \[ \la ([a]P, -\va(P)), ([a]Q, -\va(Q)) \ra; \quad \la ([a]P, \va(P)), ([a]Q, \va(Q)) \ra \]
		where $E[\ell]= \la P, Q\ra$. It is easy to see these two kernels are nondiagonal and in the same orbit of $G= \Aut(E \times E')$. 
		
		We show that $\va$ is  the unique  isogeny from $E$ to $E'$ such that $\deg(\va)\leq \ell$ and $\ell-\deg(\va)=\square$. If there is another isogeny $\va'$ from $E$ to $E'$ of degree $d'\le \ell$ such that $\ell-d'=b^2$, then $\widehat{\va'}\circ \va$ is a loop of $E$ of degree $dd'$. By the assumption $p> q\ell^2>4\ell^4$,  we have $\widehat{\va'} \circ \va = [x]$ with $x \in \Z$. Hence $[\deg(\va')]\varphi=[x]\va'$. Comparing the degrees on both side, we find that there exist coprime integers $t_1$ and $t_2$ such that $t_1^2 d'=t_2^2 d$, which means $\delta=d/t_1^2=d'/t_2^2\in \Z_+$. By the relation $\ell= d+ a^2= d'+b^2$, we have
		\[ \ell= t_1^2 \delta+a^2= t_2^2\delta+b^2. \]   
		Thus $\ell$ splits into principal prime ideals in the imaginary quadratic field $\Q(\sqrt{-\delta})$. By unique factorization,  we must have $d= d'$ and $\va= \va'$. 
		
		(2) If there is an isogeny $\va$ from $E$ to $E'$ of degree $\ell$, then there is one loop 
		$\begin{pmatrix}
			0 & -\widehat{\va} \\
			\va & 0
		\end{pmatrix}$ 
		of $E\times E'$, whose kernel 	  is diagonal. 
		
		We show that $\va$ is the unique  isogeny from $E$ to $E'$ such that $\deg(\va)\leq \ell$ and $\ell-\deg(\va)=\square$. 	Indeed, suppose $\va'$ is another isogeny satisfying this property,  then $\widehat{\va'}\circ \va=[x]$ for some $x\in \Z$. If $\deg(\va')=\ell$, then $\va=\va'$. The argument in (i) means the case $\deg(\va')<\ell$ is impossible. 
	\end{proof} 
	
	\subsection{ Neighbors of $[E\times E'] $ }
	
	\begin{thm}
		Suppose either $\End(E)= \gO(q)$ and $p> q\ell^2> 4\ell^4$ or  $\End(E)= \gO'(q)$ and $p> 4q\ell^2> 4\ell^4$. 
		\begin{enumerate}[1]
			\item  If there is an isogeny from $E$ to $E'$ of degree $d$ such that $\ell- d= \square>0$, then the neighbor   of $[E\times E'] $ is given by the following  table:
			
			\begin{center}
				\begin{tabular}{|c|c|c||c|c|c|}
					\hline
					\#{Vertices} & Multi-Edges & 	Edge Type	& \#{Vertices} & Multi-Edges & Edge Type	\\
					\hline
					$(\ell+ 1)^2$ & 1 & $\DD$		& $\frac{\ell^3- \ell- 2}{2}$ & 2 & $\NNN$  \\
					\hline
				\end{tabular}
			\end{center}
			
			\item If there is an isogeny from $E$ to $E'$ of degree $\ell$, then the neighbor  is given by the following  table:
			
			\begin{center}
				\begin{tabular}{|c|c|c||c|c|c|}
					\hline
					\#{Vertices} & Multi-Edges & 	Edge Type	& \#{Vertices} & Multi-Edges & Edge Type	\\
					\hline
					$\ell^2+ 2\ell$ & 1 & $\DD$		& $\frac{\ell^3- \ell}{2}$ & 2  & $\NNN$ \\
					\hline
				\end{tabular}
			\end{center}
			
			\item If there is no isogeny from $E$ to $E'$ of degree $d$ such that $\ell- d= \square$, then the neighbor  is given by the following  table:
			
			\begin{center}
				\begin{tabular}{|c|c|c||c|c|c|}
					\hline
					\#{Vertices} & Multi-Edges & 	Edge Type	& \#{Vertices} & Multi-Edges & Edge Type	\\
					\hline
					$(\ell+ 1)^2$ & 1 & $\DD$		& $\frac{\ell^3- \ell}{2}$ & 2 & $\NNN$ \\
					\hline
				\end{tabular}
			\end{center}
		\end{enumerate}
	\end{thm}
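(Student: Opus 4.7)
The plan is to combine Theorem~\ref{thmee} (which counts loops) with a $G$-orbit transitivity argument. By Corollary~\ref{cor:isogenies}, the $(\ell+1)(\ell^2+1)$ outgoing $(\ell,\ell)$-isogenies from $[E\times E']$ split as $(\ell+1)^2$ diagonal plus $\ell^3-\ell$ non-diagonal. Subtracting the loops from Theorem~\ref{thmee}---two non-diagonal loops in Case (1), one diagonal loop in Case (2), none in Case (3)---gives the non-loop edge counts by type. By Proposition~\ref{pee}, each diagonal edge has $G$-orbit of size $1$ and each non-diagonal edge has $G$-orbit of size $2$, so the number of $G$-orbits of non-loop edges in each case is immediate.

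The key reduction is the claim that each vertex $V\neq [E\times E']$ adjacent to $[E\times E']$ receives exactly one $G$-orbit of edges from $[E\times E']$. Once this is established, each row of the three tables is determined: the vertex count equals the orbit count of the indicated type, and the multi-edge column equals the orbit size ($1$ for diagonal $\DD$-type, $2$ for non-diagonal $\NNN$-type). To prove the claim, I would argue by contradiction: suppose $\al,\be: E\times E'\to V$ lie in distinct $G$-orbits with $V\neq [E\times E']$. Then $\gamma:=\widehat{\al}\circ \be$ is an endomorphism of $E\times E'$ of reduced degree $\ell^4$ satisfying $\gamma^+\gamma=\ell^2 I$ but $\gamma\notin \ell G$, since otherwise $\be=\al\cdot g$ for some $g\in G$.

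Writing $\gamma$ as a $2\times 2$ block matrix with entries $\phi_1\in \End(E)$, $\phi_2\in \Hom(E',E)$, $\phi_3\in \Hom(E,E')$, $\phi_4\in \End(E')$, the identity $\gamma^+\gamma=\ell^2 I$ yields $\deg\phi_1=\deg\phi_4$, $\deg\phi_2=\deg\phi_3$, $\deg\phi_1+\deg\phi_3=\ell^2$, and $\widehat{\phi_3}\phi_4=\widehat{\phi_1}\phi_2$. Under the size hypothesis, the explicit norm formulas for $\gO(q)$ and $\gO'(q)$ given at the start of the section force every element of $\End(E)$ or $\End(E')$ of reduced norm at most $\ell^2$ to lie in $\Z$---this is the same norm-form argument as in the proof of Theorem~\ref{thmee}. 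Hence $\phi_1=[a]$ and $\phi_4=[\pm a]$ for some $a\in \Z$, and (for $a\neq 0$) $\phi_2=\pm\widehat{\phi_3}$. If $a=\pm\ell$, then $\phi_3=0$ and $\gamma\in \ell G$, contradicting the assumption.

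The main obstacle is to dispose of the remaining range $0\leq |a|<\ell$, where $\phi_3:E\to E'$ is a genuine isogeny of degree $\ell^2-a^2>0$. The strategy mirrors Proposition~\ref{llc} and Theorem~\ref{thm:trans} in Section~3: I would classify the factorizations of such a $\gamma$ into two $(\ell,\ell)$-isogenies and show that any intermediate vertex must be $[E\times E']$. Concretely, given a factorization $\gamma=\widehat{\al}\circ \be$ with $|a|<\ell$, the block form together with the degree constraints forces the intermediate structure to contain a small-degree isogeny from $E$ to $E'$; the three cases of the hypothesis---existence of an isogeny of degree $d$ with $\ell-d=\square>0$, of degree exactly $\ell$, or of no such isogeny---precisely govern what short isogenies are available, while Theorem~\ref{thmee} supplies the uniqueness of the short isogeny in Cases~(1) and~(2). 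A case-by-case verification then shows that the only loops of degree $\ell^4$ admitting a factorization through some $V\neq [E\times E']$ are those in $\ell G$, contradicting $\gamma\notin\ell G$. This establishes the claim, and the three tables follow.
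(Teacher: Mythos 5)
Your counting framework (Corollary~\ref{cor:isogenies} plus the loop count of Theorem~\ref{thmee} plus the orbit sizes of Proposition~\ref{pee}, reduced to the claim that each adjacent vertex receives exactly one $G$-orbit of edges) is the same as the paper's, and so is the first half of the key step: forming $\gamma=\widehat{\al}\circ\be$, using the norm forms of $\gO(q)$, $\gO'(q)$ to force the diagonal block into $\Z$, and deducing the shape $\begin{psmallmatrix} [a] & \mp\widehat{\va} \\ \va & \pm[a]\end{psmallmatrix}$. The gap is in your final step for $0\le |a|<\ell$. You propose to reach a contradiction with $\gamma\notin \ell G$ by proving that every degree-$\ell^4$ loop not in $\ell G$ factors only through the vertex $[E\times E']$ itself. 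That statement is false in general and cannot follow from the hypotheses. If $V$ is an adjacent vertex carrying a polarization-preserving automorphism $u$ that does not descend through an edge $\al: E\times E'\to V$ (for instance $V\cong E_1\times E_1$ a product of two isomorphic curves with $u$ the swap, which happens exactly when there is a cyclic degree-$\ell^2$ isogeny $E\to E'$; or a vertex whose second factor has a non-scalar endomorphism of reduced norm $\ell^2$ --- neither is excluded by the hypotheses, which constrain only $\End(E)$ and the degree-$\le\ell$ part of $\Hom(E,E')$), then $\gamma=\widehat{(u\al)}\circ\al=\widehat{\al}\circ u^{-1}\circ\al$ satisfies $\gamma^+\gamma=\ell^2 I$, factors through $V\neq [E\times E']$, and lies in $\ell G$ only if $u^{-1}\al=\al g$ for some $g\in G$, i.e.\ only if $u$ descends. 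So the dichotomy ``$\gamma\in\ell G$ or every intermediate vertex is $[E\times E']$'' on which your contradiction rests is not available, and the case-by-case verification you outline cannot close.

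What the paper does instead (and what you need) is weaker and direct: it never constrains the intermediate vertex. From the block form it computes $\ker\gamma=\la ([a]P,-\va(P)),([a]Q,-\va(Q))\ra\cong(\Z/\ell^2\Z)^2$ with $E[\ell^2]=\la P,Q\ra$, hence $\ker\al=\ker\gamma\cap (E\times E')[\ell]=\la([a\ell]P,-\va([\ell]P)),([a\ell]Q,-\va([\ell]Q))\ra$ and, dualizing, $\ker\be=\la([a\ell]P,\va([\ell]P)),([a\ell]Q,\va([\ell]Q))\ra$. These two kernels differ by $1\times[-1]\in G$, so any two edges to the same vertex are automatically in the same $G$-orbit, no matter what $V$ is; the examples above are harmless because there the two factors have the same kernel. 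Your proposal is therefore missing the decisive idea (the explicit kernel computation), and the substitute you suggest would fail.
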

	
	\begin{proof}
		We claim that two $(\ell,\ell)$-isogenies $\al, \be$ from $[E \times E']$ to the same vertex are in the same $G$-orbit under the assumption of the theorem.
		
		Indeed, in this situation  $\widehat{\be}\circ \al$ is a loop of $[E\times E']$. Write \[\widehat{\be} \circ \al= \begin{pmatrix}
			\va_1 & -\va_2 \\
			\va_3 & \va_4
		\end{pmatrix}\] 
		where $\va_1:E\rw E$, $\va_2: E' \rw E$, $\va_3: E \rw E'$ and $\va_4: E' \rw E'$ are isogenies of elliptic curves.
		Under the assumption $p> q\ell^2>4\ell^4$ for $\End(E)=\gO(q)$ or  $p> 4q\ell^2>4\ell^4$  for $\End(E)=\gO'(q)$, loops of $E$ with degree $\le \ell^2$ are all inside $\Z$. Suppose $\va_1=[a]$ for some $a\in \Z$. Following the proof of Theorem~\ref{thmee} (1), we have
		\[ \va_1= \va_4=[a], \ \va_2= \widehat{\va_3} \text{ and } a^2+ \deg(\va_2)= \ell^2. \] 
		We now denote $\va_3$ by $\va$. Then 
		\[ \ker(\widehat{\be}\circ \al)=\la ([a]P, -\va(P)), ([a]Q, -\va(Q)) \ra\cong  \left(\Z/\ell^2\Z\right)^2\]
		where $E[\ell^2]= \la P, Q\ra$, thus $\be$ is a good extension of $\al$. 
		Hence, 
		\[\ker(\al)=\ker(\widehat{\be}\circ \al)  \cap (E \times E')[\ell]= \la ([a\ell]P, -\va([\ell]P)), ([a\ell]Q, -\va([\ell]Q)) \ra. \]
		Similarly
		\[ \ker(\be)=\ker(\widehat{\al}\circ\be)\cap (E \times E')[\ell]=\la ([a\ell]P, \va([\ell]P)), ([a\ell]Q, \va([\ell]Q)) \ra.  \]
		This implies that $\al, \be$ are in the same orbit. 
		
		By the above claim and Proposition~\ref{pee}, an adjacent vertex connects to $E\times E'$ by either  two edges whose kernels are nondiagonal or  only one edge whose kernel is diagonal. Now the theorem follows from Corollary~\ref{cor:isogenies} and Theorem~\ref{thmee}.	
	\end{proof}
	
	\section{Loops and neighbors of $[E\times E]$ for $j(E)\in \F_p\backslash \{0, 1728\}$}
	
	In this section, we assume $E$ is a  supersingular elliptic curve over $\F_p$ such that $j(E)\neq  0,\ 1728$. The maximal orders  $\gO(q)$ and $\gO'(q)$ are defined as in \S~5. Results in this section are parallel to results in \S\ 5, with almost identical proof which will be omitted.

	\subsection{Kernels of $(\ell,\ell)$-isogenies from  $E\times E$}
	
	Note that $E$ is a supersingular elliptic curve over $\F_{p^2}$, we have
	\[G:= \Aut(E\times E)=\{[\pm 1]\times  [\pm 1]\} \times \left\{1, \begin{pmatrix}
		0 & 1\\
		1 & 0
	\end{pmatrix}\right\}\cong \left(\Z/2\Z\right)^3. \]
	Consequently, we have
	\begin{prop}\label{peee}  
		The	$(\ell, \ell)$ isogenies from $E\times E$ can be divided into the following classes, which are unions of $G$-orbits: 
		\begin{enumerate}
			\item[$(\DD1)$]  $\ker(\va)= K_1\times K_2$, where  $K_1=K_2\su E[\ell]$ has order $\ell$.  In this case, $G_\va= G$ and $O_\va=\{\va\}$. This class contains $\ell+ 1$ isogenies.
			
			\item[$(\DD2)$]  $\ker(\va)= K_1\times K_2$, where  $K_1\ne K_2\su E[\ell]$ has order $\ell$.  In this case, $|G_\va|= 4$ and $|O_\va|= 2$. This class contains $\ell(\ell+1)$ isogenies.
			
			\item[$(\NNN1)$] if $\ell \equiv 1 \pmod 4$, and $\ell= a^2+ b^2$, $t= -\frac{a}{b}$ in $\Z/\ell\Z$, $\ker(\va)= \la (P, tP), (Q, tQ) \ra$ or $\la (P, -tP), (Q, -tQ) \ra$, $\la P, Q\ra= E[\ell]$. In this case, the isogeny $\va$ corresponds to one of the following two loops of $[E \times E]$: 
			\[\begin{pmatrix}
				a & b \\
				-b & a
			\end{pmatrix}, \quad \begin{pmatrix}
				a & b\\
				b & -a
			\end{pmatrix}.\]

			\item[$(\NNN2)$]  $\ker(\va)= \la (P, aP+ bQ), (Q, cP+ dQ) \ra$, where $ad- bc= -1$, $a+ d= 0$, $\la P, Q\ra= E[\ell]$.  In this case, $G_\va= \{1, [-1] \times [-1]\}\times \left\{ 1, \begin{pmatrix}
				0 & 1\\
				1 & 0\\
			\end{pmatrix} \right\}$ and $|O_\va|= 2$. This class contains $\ell(\ell+1)$ isogenies. 
			
			\item[$(\NNN3)$]  $\ker(\va)= \la (P, aP+ bQ), (Q, cP+ dQ) \ra$, where $ad- bc= -1$, $a+ d \ne 0$,  $\la P, Q\ra= E[\ell]$.  In this case, $G_\va= \{1, [-1] \times [-1]\}$ and $|O_\va|= 4$. 	This class contains $\ell^3- \ell^2- 2\ell- 2$ isogenies if $\ell \equiv 1 \pmod 4$, and $\ell^3- \ell^2- 2\ell$ isogenies if $\ell \equiv 3 \pmod 4$.	
		\end{enumerate}
	\end{prop}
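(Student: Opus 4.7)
My plan is to follow the strategy used for the $E_{1728}$ and $E_0$ cases in \S 3 and \S 4: count the diagonal and non-diagonal $(\ell,\ell)$-isogenies separately, then analyze the $G$-action on each family. The decisive simplification here is that $\End(E)^{\times}=\{\pm 1\}$ for $j(E)\notin\{0,1728\}$, so $G=\Aut(E\times E)$ has only $8$ elements, generated by the sign flips on the two factors together with the swap $\sigma=\begin{psmallmatrix} 0 & 1 \\ 1 & 0\end{psmallmatrix}$. By Lemma~\ref{lemma:diagonal} and Corollary~\ref{cor:isogenies} there are $(\ell+1)^2$ diagonal and $\ell^3-\ell$ non-diagonal $(\ell,\ell)$-isogenies from $E\times E$, and my task is to partition these into the five classes $(\DD1)$, $(\DD2)$, $(\NNN1)$, $(\NNN2)$, $(\NNN3)$ as described.

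For a diagonal kernel $K_1\times K_2$, the sign flips act trivially on each $\F_\ell$-line in $E[\ell]$, while $\sigma$ interchanges the two factors. Hence if $K_1=K_2$ (class $(\DD1)$) the kernel is stabilized by all of $G$, giving a singleton orbit, and the $\ell+1$ choices of line contribute $\ell+1$ such kernels. The remaining $(\ell+1)\ell$ ordered pairs with $K_1\neq K_2$ (class $(\DD2)$) have stabilizer $\{[\pm 1]\times[\pm 1]\}$ of order $4$ and form orbits of size~$2$.

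For non-diagonal kernels, by Lemma~\ref{lemma:diagonal} I can choose a basis $\{P,Q\}$ of $E[\ell]$ such that $\ker(\va)=\la (P,aP+bQ),(Q,cP+dQ)\ra$ with $ad-bc\equiv -1\pmod\ell$ (from maximal isotropy of the Weil pairing). I then check coset by coset which elements of $G$ fix the kernel: the central element $[-1]\times[-1]$ always does (acting as $-1$ on each basis vector); neither $[-1]\times 1$ nor $1\times[-1]$ can fix a non-diagonal kernel, since doing so would force a vector $(0,\ast)$ with $\ast\neq 0$ into $\ker(\va)$, contradicting Lemma~\ref{lemma:diagonal}; the swap $\sigma$ fixes the kernel iff the trace vanishes, i.e.\ $a+d\equiv 0\pmod\ell$, as a direct expansion of $\sigma\cdot(P,aP+bQ)=(aP+bQ,P)$ in the given basis shows; finally, a twisted swap like $\sigma\cdot([-1]\times 1)$ fixes the kernel iff $a=d$, $b=c=0$ and $a^2\equiv -1\pmod\ell$, which forces $\ell\equiv 1\pmod 4$ and cuts out exactly class $(\NNN1)$.

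To finish, I count: class $(\NNN1)$ consists of the two loops $\begin{psmallmatrix} a & b \\ -b & a\end{psmallmatrix}$ and $\begin{psmallmatrix} a & b \\ b & -a \end{psmallmatrix}$ coming from a decomposition $\ell=a^2+b^2$, present only when $\ell\equiv 1\pmod 4$; class $(\NNN2)$ is the trace-zero family, whose count $\ell(\ell+1)$ follows from solving $a^2+bc=1$ with case split on $a=\pm 1$ versus $a\neq \pm 1$; and class $(\NNN3)$ is the complement among non-diagonal kernels, yielding $\ell^3-\ell-\ell(\ell+1)-2\cdot\mathbf{1}_{\ell\equiv 1\bmod 4}$, which matches the stated counts in both residue classes of $\ell$ modulo $4$. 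The main obstacle I anticipate is the twisted-swap step: one must carefully verify that no element outside the listed stabilizers sneaks in, which reduces to solving the system $a(a+d)=-2$, $d(a+d)=-2$, $b(a+d)=c(a+d)=0$ modulo $\ell$ and checking that, under $ad-bc=-1$ and $a+d\neq 0$, the only solutions force $a=d$ and $a^2=-1$, so the extra stabilizers are confined to class $(\NNN1)$.
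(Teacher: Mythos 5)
Your proposal is correct and takes essentially the same route as the paper: the paper omits the proof of this proposition, but its implicit argument (as in the analogous computations of \S 3--\S 4) is exactly your orbit--stabilizer analysis of the order-$8$ group $G$ on diagonal and non-diagonal kernels, with the trace-zero criterion for the swap and the $f^2=-1$ criterion for the twisted swap singling out class $(\NNN1)$, followed by the solution counts for $a^2+bc=1$. The only point you assert without checking is the identification in $(\NNN1)$ of the two kernels with the kernels of the displayed integer-matrix loops, which is a one-line computation of the kernel of a rank-one matrix modulo $\ell$.
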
	
	\subsection{Local structure of $[E\times E]$}	
	
	\begin{thm}\label{thmeee} 
		Suppose either $\End(E)=\gO(q)$  and $p> q\ell> 4\ell^2$ or $\End(E)= \gO'(q)$ and $p> 4q\ell> 4\ell^2$. 
		
		\begin{enumerate}[1]
			\item  If $\ell\equiv 1 \pmod 4$, then there are exactly two loops of $E\times E$, whose kernels are in $(\NNN1)$.   
			
			\item  If $\ell \equiv 3 \pmod 4$, then there is no loop of $E\times E$.
		\end{enumerate} 
	\end{thm}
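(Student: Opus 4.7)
The plan is to translate the problem into a matrix equation over $\gO=\End(E)$ and then use the explicit reduced norm formulas to reduce to integer matrices, mirroring the arguments in the proofs of Theorems~\ref{thmE1728}, \ref{thmE17281} and \ref{thmee}. A loop of $[E\times E]$ of type $(\ell,\ell)$ corresponds to a matrix $M=\begin{psmallmatrix}a&b\\c&d\end{psmallmatrix}\in M_2(\gO)$ satisfying $M^+M=\ell I_2$, and its kernel determines the loop up to the right action of $G=\Aut(E\times E)$. Expanding $M^+M=\ell I_2$ gives
\[ \Nrd(a)+\Nrd(c)=\Nrd(b)+\Nrd(d)=\ell,\qquad \bar a b+\bar c d=0. \]

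First I would reduce every entry of $M$ to a rational integer in two steps. Under the hypothesis $p>q\ell$ (resp.\ $p>4q\ell$), the $p$-containing summands of the reduced norm on $\gO(q)$ (resp.\ $\gO'(q)$) force any element of norm $\le\ell$ to lie in $\Z+\Z\frac{1+i}{2}$ (resp.\ in $\Z[i]$), exactly as at the start of the proof of Theorem~\ref{thmE1728}. A second application of the norm bound, invoking $q>4\ell$ (resp.\ $q>\ell$) which is encoded in $q\ell>4\ell^2$ (resp.\ $4q\ell>4\ell^2$), kills the imaginary coefficient as well, because on nonzero values of that coefficient the relevant binary quadratic form ($x^2+xy+(q+1)y^2/4$ or $x^2+qy^2$) already exceeds $\ell$. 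Hence $a,b,c,d\in\Z$, the quaternion conjugation becomes trivial, and $M^+M=\ell I_2$ becomes the integer system
\[ a^2+c^2=\ell,\qquad b^2+d^2=\ell,\qquad ab+cd=0. \]

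For $\ell\equiv 3\pmod 4$, Fermat's two-square theorem rules out $a^2+c^2=\ell$, so no loops exist, proving~(2). For $\ell\equiv 1\pmod 4$, fix a decomposition $\ell=u^2+v^2$ with $u,v\in\Z_{>0}$; then $(a,c)$ ranges over the eight sign-and-swap variants of $(u,v)$, and $ab+cd=0$ with $b^2+d^2=\ell$ forces $(b,d)=\pm(-c,a)$, yielding exactly $16$ matrices. A direct kernel computation shows that each such $M$ has kernel $\langle(P,tP),(Q,tQ)\rangle$ with $\langle P,Q\rangle=E[\ell]$ and $t\equiv -a/b\pmod\ell$ satisfying $t^2=-1$, and that the $16$ matrices realize exactly the two kernels obtained from $t$ and $-t$. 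These are the two loops of Class~$(\NNN1)$ described in Proposition~\ref{peee}, proving~(1).

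The main bookkeeping obstacle is in the last step: one must verify that the right action of $G$ of order $8$ is free on the $16$ matrices and produces precisely the two kernel classes indexed by $\pm t$, matching the explicit form of loops in Class~$(\NNN1)$. The norm-reduction step, though notationally heavy, follows the same template as for $[E_{1728}\times E_{1728}]$ and for $E\times E'$ in Theorem~\ref{thmee}, so no new ideas are required there.
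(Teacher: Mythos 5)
Your proposal is correct and takes essentially the same route as the paper: the paper's (omitted) proof parallels Theorem~\ref{thmee}, where the hypotheses $p>q\ell>4\ell^2$ (resp.\ $p>4q\ell>4\ell^2$) force every endomorphism of $E$ of degree at most $\ell$ to be multiplication by an integer, so all four entries of a loop matrix lie in $\Z$ and the equation $M^+M=\ell I_2$ becomes $a^2+c^2=b^2+d^2=\ell$, $ab+cd=0$, giving no solutions for $\ell\equiv 3\pmod 4$ and exactly the two non-diagonal kernels of Class $(\NNN1)$ for $\ell\equiv 1\pmod 4$. One small slip in your bookkeeping remark: matrices with the same kernel are related by \emph{left} multiplication by $g\in G$ (the right action $M\mapsto Mg$ moves the kernel to $g^{-1}(\ker M)$ and in fact swaps the two classes $\pm t$), but this does not affect your conclusion since you identify the kernels of the $16$ matrices directly and find precisely the two classes indexed by $\pm t$.
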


	\begin{thm} Suppose either $\End(E)=\gO(q)$  and $p> q\ell^2> 4\ell^4$ or $\End(E)= \gO'(q)$ and $p> 4q\ell^2> 4\ell^4$. 
		\begin{enumerate}[1]
			\item  If $\ell \equiv 1 \pmod 4$, the neighbor of $[E \times E]$ is given by the following  table:
			
			\begin{center}
				\begin{tabular}{|c|c|c||c|c|c|}
					\hline
					\#{Vertices} & Multi-Edges & Edge Type	& \#{Vertices} & Multi-Edges & Edge Type \\
					\hline
					$\ell+ 1$ & 1 & $\DD1$		& $\frac{\ell^2+ \ell}{2}$ & 2 & $\NNN2$ \\
					\hline
					$\frac{(\ell+ 1)\ell}{2}$ & 2 & $\DD2$	& $\frac{\ell^3- \ell^2- 2\ell- 2}{4}$ & 4  & $\NNN3$	\\
					\hline
				\end{tabular}
			\end{center}
			
			\item If $\ell \equiv 3 \pmod 4$, the neighbor of  is given by the following  table:
			
			\begin{center}
				\begin{tabular}{|c|c|c||c|c|c|}
					\hline
					\#{Vertices} & Multi-Edges & Edge Type	& \#{Vertices} & Multi-Edges & Edge Type	\\
					\hline
					$\ell+ 1$ & 1 & $\DD1$		& $\frac{\ell^2+ \ell}{2}$ & 2 & $\NNN2$\\
					\hline
					$\frac{(\ell+ 1)\ell}{2}$ & 2 & $\DD2$		& $\frac{\ell^3- \ell^2- 2\ell}{4}$ & 4  & $\NNN3$ \\
					\hline
				\end{tabular}
			\end{center}
		\end{enumerate}
	\end{thm}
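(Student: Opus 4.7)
The plan is to follow the template of the analogous theorem in \S 5. The crux is to show that any two $(\ell,\ell)$-isogenies $\al,\be:[E\times E]\to V$ sharing a common target lie in the same $G$-orbit; once this transitivity is established, the vertex counts follow by dividing the class sizes from Proposition~\ref{peee} by the corresponding $G$-orbit sizes, after discarding the classes consisting of loops by Theorem~\ref{thmeee}.

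First I would examine the composed loop $\hat\be\circ\al$ of degree $\ell^4$ on $E\times E$. It is represented by a matrix $\begin{pmatrix}\va_1 & -\va_2\\ \va_3 & \va_4\end{pmatrix}$ with $\va_i$ an isogeny of $E$ of degree at most $\ell^2$. The hypotheses $p>q\ell^2$ (for $\End(E)=\gO(q)$) or $p>4q\ell^2$ (for $\End(E)=\gO'(q)$) are exactly what is needed to ensure, via the explicit reduced norm formulas for these two maximal orders, that every element of $\End(E)$ of norm $\le\ell^2$ lies in $\Z$. Hence $\va_1=[a]$ for some $a\in\Z$, and then repeating the computation of Theorem~\ref{thmee} at degree $\ell^2$ yields $\va_4=[a]$, $\va_2=\widehat{\va_3}$ and $a^2+\deg\va_3=\ell^2$. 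Consequently $\ker(\hat\be\circ\al)\cong(\Z/\ell^2\Z)^2$, so $\be$ is a good extension of $\al$, and intersecting with $(E\times E)[\ell]$ gives explicit generators for $\ker\al$ and $\ker\be$ that differ only by the sign of $a$, the sign of $\va_3$, and the factor swap, all of which are absorbed into the action of $G\cong (\Z/2\Z)^3$.

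With transitivity in hand, the count is bookkeeping. Class $\NNN1$ (present only when $\ell\equiv 1\pmod 4$) consists entirely of the two loops identified in Theorem~\ref{thmeee} and contributes no neighbors. For each remaining class $\DD1,\DD2,\NNN2,\NNN3$, Proposition~\ref{peee} supplies the class size and the orbit size $|O_\va|=1,2,2,4$ respectively, so the number of vertices receiving edges of that type equals the class size divided by $|O_\va|$ and the multi-edge count equals $|O_\va|$. Substituting the two cases $\ell\equiv 1$ or $3\pmod 4$ (which changes only the Class $\NNN3$ count, between $\ell^3-\ell^2-2\ell-2$ and $\ell^3-\ell^2-2\ell$) reproduces the two tables.

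The main obstacle is the transitivity step, and specifically verifying that the four sign and swap ambiguities in the extracted generators of $\ker\al,\ker\be$ are actually realized by elements of $G$ (rather than giving genuinely distinct orbits). A secondary technical point is checking that the bounds $p>q\ell^2>4\ell^4$ and $p>4q\ell^2>4\ell^4$ are precisely strong enough to exclude non-integer endomorphisms of norm $\le\ell^2$ in both families $\gO(q)$ and $\gO'(q)$; this is a direct but case-splitting computation with the reduced norm formulas displayed in \S 5.
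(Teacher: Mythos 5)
Your proposal is correct and follows essentially the same route the paper intends: the paper itself omits the proof in \S 6, stating that results there are ``parallel to results in \S 5, with almost identical proof,'' and you have faithfully reconstructed that \S 5 template (show $G$ acts transitively on the edges to any fixed neighbor $V\neq E\times E$ via the norm bound forcing $\hat\beta\circ\alpha$ into $M_2(\Z)$, then divide the class sizes in Proposition~\ref{peee} by $|O_\va|$ after discarding the $\NNN1$ loops). One small wrinkle worth spelling out that is absent in \S 5: here $\va_3$ is also an endomorphism of $E$ of degree $\le\ell^2$, hence an integer $[b]$, so $\hat\beta\circ\alpha=\begin{psmallmatrix}a&-b\\b&a\end{psmallmatrix}$ with $a^2+b^2=\ell^2$; when $\ell\equiv 1\pmod 4$ and $\gcd(a,b)=1$ this equals (up to $G$) the square of a loop from class $\NNN1$, and one checks that then $\ker\alpha$ would coincide with that loop's kernel, contradicting $V\neq E\times E$, so only the $\ell g$ case survives.
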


	\section{A simple proof of Main Theorem in \cite{LOX20}}
	In this section, we give an alternative proof of the following theorem  in \cite{LOX20} without using Deuring's correspondence in \cite{d}. Our new proof is similar to the proof of Proposition ~\ref{llc}. 
	\begin{thm} Suppose $\ell>3$.  Consider the $\ell$-isogeny graph $\gG_{\ell}(\F_{p^2},-2p)$ of supersingular elliptic curves over $\F_{p^2}$ of trace $-2p$.
		
		$(1)$ If $p\equiv 3\bmod{4}$ and $p>4\ell^2$,  there are $\frac{1}{2}\bigl(\ell-(-1)^{\frac{\ell-1}{2}}\bigr)$ vertices adjacent to  $[E_{1728}]$ in the graph, each connecting $[E_{1728}]$ with $2$ edges.
		Moreover,  $1+(\frac{\ell}{p})$ of the vertices are of   $j$-invariants in $\F_p-\{1728\}$.
		
		$(2)$ If $p\equiv 2\bmod{3}$ and $p>3\ell^2$, there are $\frac{1}{3}(\ell-(\frac{\ell}{3}))$  vertices adjacent to $[E_{0}]$ in the graph, each connecting $[E_{0}]$ with $3$ edges. Moreover,  $1+(\frac{-p}{\ell})$ of the vertices are of   $j$-invariants in $\F_p^*$.
	\end{thm}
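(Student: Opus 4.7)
The plan is to adapt the loop-classification argument of Proposition~\ref{llc} to the endomorphism ring $\gO_{1728}$ itself; part (2) will follow the same template with $\gO_0$ and Lemma~\ref{dio2} in place of \eqref{eq:nrd1} and Jacobi's formula. First I would enumerate the elements of $\gO_{1728}$ of reduced norm $\ell$ and $\ell^2$. Under the hypothesis $p>4\ell^2$ the explicit norm \eqref{eq:nrd1} forces the coefficients of $\frac{1+j}{2}$ and $\frac{i+k}{2}$ to vanish, so every such element sits inside $\Z[i]$; Jacobi gives $8$ or $0$ norm-$\ell$ solutions according to $\ell \bmod 4$, while the norm-$\ell^2$ count is $12$ (resp.\ $4$) if $\ell\equiv 1$ (resp.\ $3$) $\bmod 4$, and in the former case these split cleanly as $4$ associates of $\ell$ plus $8$ associates of $\lambda^2$ or $\bar\lambda^2$.

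Next I would study the action of $\Aut(E_{1728})=\{\pm 1,\pm i\}$ on the $\ell+1$ order-$\ell$ subgroups of $E_{1728}[\ell]$: the effective action is through $\langle i\rangle\cong\Z/2\Z$, whose fixed lines are exactly the $i$-eigenlines $L_1, L_2$ (existing iff $\ell\equiv 1\bmod 4$) and which correspond to the loops of degree $\ell$ enumerated above. The remaining $\ell-(-1)^{(\ell-1)/2}$ lines fall into orbits of size $2$; within each orbit $i$ descends to an isomorphism between the two quotient curves, giving multiplicity $2$. To see that distinct orbits give non-isomorphic quotients, assume an isomorphism $\psi\colon E_{1728}/K_1\to E_{1728}/K_2$ exists between vertices of two non-loop orbits and form the loop $\alpha=\widehat{\phi_{K_2}}\circ\psi\circ\phi_{K_1}$ of degree $\ell^2$: by the enumeration either $\alpha=\ell u$, in which case cancellation of $\hat\phi_{K_2}$ modulo $\ker\hat\phi_{K_2}=\phi_{K_2}(E_{1728}[\ell])$ yields $\psi\phi_{K_1}=\phi_{K_2}v$ with $v\in\Aut$ and so $K_1=v^{-1}K_2$, or $\alpha$ is an associate of $\lambda^2$ (resp.\ $\bar\lambda^2$), whose cyclic kernel of order $\ell^2$ has $\ell$-torsion $L_1$ (resp.\ $L_2$), forcing $K_1\in\{L_1,L_2\}$—a contradiction since $K_1$ is a non-loop kernel. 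This yields the vertex count $\tfrac{1}{2}(\ell-(-1)^{(\ell-1)/2})$.

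For the $\F_p$-count, observe that $\pi=j\in\gO_{1728}$ satisfies $\pi^2=-p$ and $\pi i=-i\pi$, and that $[E_{1728}/K]$ lies in $\F_p$ iff $(E_{1728}/K)^{(p)}=E_{1728}/\pi K$ is isomorphic to $E_{1728}/K$, i.e.\ $\pi K\in\{K,iK\}$. The anticommutation gives $\pi L_1=L_2$, so neither $L_1$ nor $L_2$ is an eigenline of $\pi$ or of $i^{-1}\pi$; thus the cases $\pi K=K$ and $\pi K=iK$ are disjoint on the non-$i$-stable lines. Each case is parameterised by eigenlines of an operator whose square is $-p$ (namely $\pi$ and $i^{-1}\pi$), contributing $1+\bigl(\frac{-p}{\ell}\bigr)$ lines, and a direct check using the anticommutation shows every $\pi$-invariant non-loop orbit contains exactly two $\pi$-invariant lines (symmetrically for $i^{-1}\pi$), so the $\F_p$-orbit count is $\tfrac{1}{2}\bigl(1+(\tfrac{-p}{\ell})\bigr)+\tfrac{1}{2}\bigl(1+(\tfrac{-p}{\ell})\bigr)=1+\bigl(\frac{-p}{\ell}\bigr)$, which equals $1+\bigl(\frac{\ell}{p}\bigr)$ by quadratic reciprocity under $p\equiv 3\bmod 4$. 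The hard part will be the loop-classification step above, where the hypothesis $p>4\ell^2$ enters and the exact analogue of Proposition~\ref{llc} for loops of degree $\ell^2$ at an elliptic curve is genuinely needed; once it is in hand, the $\F_p$ refinement is purely linear algebra in $E_{1728}[\ell]$.
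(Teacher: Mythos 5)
Your proposal is correct, and its core coincides with the paper's own argument: under $p>4\ell^2$ every degree-$\ell^2$ loop at $E_{1728}$ lands in $\Z[i]$, the enumeration of norm-$\ell^2$ elements (associates of $\ell$, $\lambda^2$, $\bar\lambda^2$) shows that two distinct $\ell$-isogenies to a common vertex must differ by an automorphism, and hence the kernels of edges to a common neighbour form a single $\langle i\rangle$-orbit; your orbit count of the non-eigenlines then gives $\frac{1}{2}\bigl(\ell-(-1)^{\frac{\ell-1}{2}}\bigr)$, which the paper leaves implicit. The genuine difference is that you also prove the rationality refinement: the paper's \S 7 proof establishes only the adjacency and multiplicity claims and says nothing about which neighbours have $j$-invariant in $\F_p$, whereas you derive it from $(E/K)^{(p)}\cong E/\pi K$, so that $j(E/K)\in\F_p$ iff $\pi K\in\{K, iK\}$, parameterize the two cases by invariant lines of $\pi$ and $i\pi$ (both squaring to $-p$), check via anticommutation that these lines pair off into whole $\langle i\rangle$-orbits disjoint from $\{L_1,L_2\}$, and finish with quadratic reciprocity (valid since $p\equiv 3\bmod 4$ gives $\bigl(\tfrac{-p}{\ell}\bigr)=\bigl(\tfrac{\ell}{p}\bigr)$); this is a worthwhile supplement to the paper's proof. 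One caveat for carrying the template to part (2): the step ``every $\pi$-invariant orbit contains exactly two $\pi$-invariant lines'' relies on $-1$ acting trivially on subgroups, so $\pi(iK)=iK$; for $E_0$, with $\zeta=\frac{1+i}{2}\in\gO_0$ and $\pi\zeta=\bar\zeta\pi$, the orbit $\{K,\zeta K,\zeta^2K\}$ of a $\pi$-invariant line $K$ contains exactly one invariant line for each of the three operators $\pi$, $\zeta^{-1}\pi$, $\zeta^{-2}\pi$ rather than three $\pi$-invariant lines, so the tally is $3\bigl(1+(\tfrac{-p}{\ell})\bigr)/3=1+\bigl(\tfrac{-p}{\ell}\bigr)$; the final count is the one claimed, but the word ``symmetrically'' needs this adjustment.
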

	\begin{proof} We will show the case $E_{1728}$, the case  $E_{0}$ follows by a parallel argument.
		
		Suppose  $p \equiv 3 \pmod 4$. Then $\Aut(E_{1728})=\{\pm 1, \pm i\}$ where $i^2=-1$. Suppose $\ell$ is another prime. If  $p>4\ell^2$, then elements in $\gO$ of reduced norm $\ell^2$ are actually inside $\Z[i]$. Thus loops from $E_{1728}$ of degree $\ell^2$ lies in $\Z[i]$. 
		
		If $\ell\equiv 3\pmod 4$, the only elements in $\Z[i]$ of reduced norm $\ell^2$ are $\sigma \ell$  where $\sigma\in \Aut(E_{1728})$, so $\ell$ is only one loop of degree $\ell^2$. If there are two different isogenies $\va, \psi$ from $E_{1728}$ of degree $\ell$ to the same adjacent vertex, then $\widehat{\psi}\circ \va$ is a loop from $E_{1728}$, which is  $\ell$. Hence $\va= \psi\circ\sigma$ with $\sigma\in \Aut(E_{1728})$. Since $\va, \psi$ are not loops, we have $\ker(\psi)= i(\ker(\va)) \ne \ker(\va)$. It means an adjacent vertex connects $E_{1728}$ with two edges. 
		
		If $\ell\equiv 1\pmod 4$, let $x,y\in \Z$ such that $x^2+y^2=\ell$. Then elements  of reduced norm  $\ell$ in $\Z[i]$ are $\sigma(x\pm  yi)$  where $\sigma\in \Aut(E_{1728})$, so  the  loops with degree $\ell$ from $E_{1728}$ are $x\pm yi$. Elements of reduced norm $\ell^2$ are $\sigma (x\pm  yi)^2$ and $\sigma \ell$, corresponding to the three loops $(x\pm yi)^2$ and $\ell$ of degree $\ell^2$ on $E_{1728}$. Furthermore, the first two loops are compositions of a loop of degree $\ell$ with itself. Hence if there are two different isogenies $\va, \psi$ from $E_{1728}$ with degree $\ell$ to the same adjacent vertex, then $\widehat{\psi}\circ \va$ is a loop of $E_{1728}$. Since $(x\pm yi)^2 \ne \ell$,  we have $\ker((x\pm yi)^2)$ is cyclic (i.e $\ker((x \pm yi)^2)\cong \Z/\ell^2\Z$). If $\widehat{\psi} \circ \va= (x+ yi)$ or $(x- yi)^2$, then $\ker(\va)= \ker(x+ yi)$ or $\ker(x-yi)$ which means $\va$ is a loop.  Hence we have $\widehat{\psi} \circ \va$ is not equal to $(x+ yi)^2$ or $(x- yi)^2$. This implies the loop is $\ell$, and $\va= \psi\circ\sigma$ where $\sigma\in \Aut(E_{1728})$. Since $\va, \psi$ are not loops, we have $\ker(\psi)= i(\ker(\va)) \ne \ker(\va)$. It means an adjacent vertex connects $E_{1728}$ with two edges. 
	\end{proof}
	
	\section*{Acknowledgment}
	
	Zheng Xu and Yi Ouyang was supported by Innovation Program for Quantum Science and Technology (Grant No. 2021ZD0302902), NSFC(Grant No. 12371013), Anhui Initiative in Quantum Information Technologies (Grant No. AHY150200). Zijian Zhou was supported by NSFC(Grant No.62202475).

\end{document}